\newcommand\bm{\boldsymbol}
\newcommand\dx{\mathrm{d}\boldsymbol{x}}
\newcommand{\lj}{[ \hspace{-2pt} [}
\newcommand{\rj}{] \hspace{-2pt} ]}
\newtheorem{assumption}{Assumption}
\newtheorem{theorem}{Theorem}
\newtheorem{lemma}{Lemma}
\numberwithin{equation}{section}
\newcommand{\mc}[1]{\mathcal{#1}}
\newcommand{\wt}[1]{\widetilde{#1}}
\def\MTh{\mc{T}_h}
\def\dx{\,\mathrm{d}x}
\def\ds{\mathrm{d}s}
\journal{Journal of Computational and Applied Mathematics}
\begin{document}

\begin{frontmatter}

\title{A finite element method by patch reconstruction for the Stokes problem
using mixed formulations}

\author[add1]{Ruo Li}
\ead{rli@math.pku.edu.cn}

\author[add2]{Zhiyuan Sun}
\ead{zysun@math.pku.edu.cn}

\author[add2]{Fanyi Yang}
\ead{yangfanyi@pku.edu.cn}

\author[add3]{Zhijian Yang}
\ead{zjyang.math@whu.edu.cn}

\address[add1]{CAPT, LMAM and School of Mathematical
  Sciences, Peking University, Beijing 100871, P. R. China}

\address[add2]{School of Mathematical
  Sciences, Peking University, Beijing 100871, P. R. China}

\address[add3]{School of Mathematics and Statistics, Wuhan University}

\begin{abstract}
  In this paper, we develop a patch reconstruction finite element
  method for the Stokes problem. The weak formulation of the interior
  penalty discontinuous Galerkin is employed. The proposed method has
  a great flexibility in velocity-pressure space pairs whose stability
  properties are confirmed by the inf-sup tests. Numerical examples
  show the applicability and efficiency of the proposed method.

\end{abstract}

\begin{keyword}
  Stokes problem $\cdot$ Reconstructed basis function $\cdot$
  Discontinuous Galerkin method $\cdot$ Inf-sup test
  \MSC[2010] 49N45\sep 65N21
\end{keyword}

\end{frontmatter}

\section{Introduction}
\label{sec:introduction}
We are concerned in this paper with the incompressible Stokes problem,
which has a wide range of applications on the approximation of low
Reynolds number flows and the time discretizations of the Oseen
equation or Naiver-Stokes equation. One of the major difficulties in
finite element discretizations for the Stokes problem is the
incompressible constraint, which leads to a saddle-point problem. The
stability condition often referred as the inf-sup (LBB) condition
requires the approximation spaces for velocity and pressure need to be
carefully chosen \cite{boffi2013mixed}. We refer to
\cite{girault1986finite, taylor1973numerical} for some specific spaces
used in the traditional finite element methods to solve the Stokes
problem.

Most recently, the discontinuous Galerkin (DG) methods have achieved a
great success in computational fluid dynamics, see the state of art
survey \cite{cockburn2000development}. Hansbo and Larson propose and
analyze an interior penalty DG method for incompressible and nearly
incompressible linear elasticity on triangular meshes in
\cite{hansbo2002discontinuous} where polynomial spaces of degree $k$
and $k-1$ are employed to approximate velocity and pressure,
respectively. In \cite{toselli2002hp} Toselli considers the
$hp$-approximations for the Stokes problem using piecewise polynomial
spaces. The uniform divergence stability and error estimates with
respect to $h$ and $p$ are proven for this DG formulation when
velocity is approximated one or two degrees higher than pressure.
Numerical results show that using equal order spaces for velocity and
pressure can also work well. Sch{\"o}tzau et al. improve the estimates
on tensor product meshes in \cite{schotzau2002mixed}. A local
discontinuous Galerkin method (LDG) for the Stokes problem is proposed
in \cite{cockburn2002local}. The LDG method can be considered as a
stabilized method when the approximation spaces for velocity and
pressure are chosen with the same order. {  Hybrid
discontinuous Galerkin methods are also of interest due to their
capability of providing a superconvergent post processing, we refer to
\cite{carrero2006hybridized, Lederer2018Hybrid,
Nguyen2010Hybridizable, Cockburn2009Hybridization} for more
discussion. }

Some special finite element spaces can be adopted to Stokes problem in
DG framework.  Karakashian and his coworkers \cite{baker1990piecewise,
karakashian1998nonconforming} propose a DG method with piecewise
solenoidal vector fields which are locally divergence-free. Cockburn
et al.  \cite{cockburn2005incompressiblei,
cockburn2005incompressibleii, carrero2006hybridized, cockburn2007note}
develop the LDG method with solenoidal vector fields. By introducing
the hybrid pressure, the pressure and the globally divergence-free
velocity can be obtained by a post-process of the LDG solution. While
Montlaur et al.\cite{montlaur2008discontinuous} present two DG
formulations for the incompressible flow, the first formulation is
derived from an interior penalty method such that the computation of
the velocity and the pressure is decoupled and the second formulation
follows the methodology in \cite{baker1990piecewise}. With an
inconsistent penalty, the velocity can be computed with absence of
pressure terms.  Liu \cite{liu2011penalty} presents a
penalty-factor-free DG formulation for the Stokes problem with optimal
error estimates. 

However, one of the limitations of DG methods is the computational
cost is higher than using continuous Galerkin method directly
\cite{zienkiewicz2003discontinuous, montlaur2009high} because of the
duplication of the degrees of freedom at interelement boundaries
especially in three-dimensional case. In this paper, we follow the
methodology in \cite{li2012efficient, 2018arXiv180300378L} to apply
the patch reconstruction finite element method to the Stokes problem.
Piecewise polynomial spaces built by patch reconstruction procedure
are taken to approximate velocity and pressure. The new space is a
sub-space of the common approximation space used in DG framework,
which allows us to employ the interior penalty formulation directly to
solve the Stokes problem. As we mentioned before, it is important to
verify the inf-sup condition for a mixed formulation to guarantee the
stability, which is often severe for a specific discretization
\cite{bathe2000inf}. We carry out a series of numerical inf-sup tests
proposed in \cite{chapelle1993inf, boffi2013mixed} to show this method
is numerically stable.

The proposed method provides many merits. First, the DOFs of the
system are totally decided by the mesh partition and have no
relationship with the interpolation order. Then the method is easy to
implement on arbitrary polygonal meshes because of the independence
between the process of the construction of the space and the geometry
structure of meshes. Third, we emphasize that the spaces to
approximate velocity and pressure can be engaged with great
flexibility. The results of numerical inf-sup tests exhibit the
robustness of our method even in some extreme cases.

The outline of this paper is organized as follows. In Section
\ref{sec:reconopreator}, we briefly introduce the patch reconstruction
procedure and the finite element space. Then the scheme of the mixed
interior penalty DG method and its error analysis for the Stokes
problem are presented in Section \ref{sec:weakform}. In Section
\ref{sec:infsuptest}, we briefly review the inf-sup test and carry out
a series of numerical inf-sup tests in several situations to show the
proposed method satisfies the inf-sup condition. Finally,
two-dimensional numerical examples are presented in Section
\ref{sec:numericalresults} to illustrate the accuracy and efficiency
of the proposed approach, and verify our theoretical results.

\section{Reconstruction operator}
\label{sec:reconopreator}
In this section, we will introduce a reconstruction operator which can
be constructed on any polygonal meshes and its corresponding
approximation properties.

Let $\Omega\subset\mathbb R^d, d=2, 3$, be a convex polygonal domain
with boundary $\partial\Omega$. We denote by $\mathcal{T}_h$ a
subdivision that partitions $\Omega$ into polygonal elements. And let
$\mathcal E_h$ be the set of $(d-1)$-dimensional interfaces (edges) of
all elements in $\mathcal T_h$, $\mathcal E_h^i$ the set of interior
faces and $\mathcal E_h^b$ the set of the faces on the domain boundary
$\partial\Omega$. We set
\begin{displaymath}
  h=\max_{K\in\mathcal T_h} h_K,\quad h_K=\text{diam}(K),\quad
  h_e=\text{diam}(e),
\end{displaymath}
for $\forall K\in \mathcal T_h,\ \forall e \in \mathcal E_h$. Further,
we assume that the partition $\MTh$ admits the following shape
regularity conditions ~\cite{Brezzi:2009,DaVeiga2014}:

\begin{enumerate}
\item[{\bf H1}\;]There exists an integer number $N$ independent of
  $h$, that any element $K$ admits a sub-decomposition $\wt{\mathcal
  T}_{h | K}$ made of at most $N$ triangles.
\item[{\bf H2}\;] $\wt{\MTh}$ is a compatible sub-decomposition, that
  any triangle $T\in\wt{\MTh}$ is shape-regular in the sense of
  Ciarlet-Raviart~\cite{ciarlet:1978}: there exists a real positive
  number $\sigma$ independent of $h$ such that $h_T/\rho_T\le\sigma$,
  where $\rho_T$ is the radius of the largest ball inscribed in $T$.
\end{enumerate}
There many useful properties using for the analysis in finite
difference schemes and DG framework can be derived from the above
assumptions, such as Agmon inequality and inverse inequality
\cite{DaVeiga2014, antonietti2013hp, 2018arXiv180300378L}:
\begin{enumerate}
  \item[{\bf M1}\;][Agmon inequality] There exists $C$ that depends on
    $N$ and $\sigma$ but independent of $h_K$ such that
    \begin{displaymath}
      \|v\|_{L^2(\partial K)}^2 \leq C\left( h_K^{-1}\|v\|_{L^2(K)}^2
      + h_K\|\nabla v\|_{L^2(K)}^2 \right), \quad \forall v \in
      H^1(K).
    \end{displaymath}
  \item [{\bf M2}\;][Inverse inequality] There exists $C$ that depends
    on $N$ and $\sigma$ but independent of $h_K$ such that
    \begin{displaymath}
      \|\nabla v\|_{L^2(K)} \leq Cm^2/h_K\|v\|_{L^2(K)}, \quad \forall
      v \in \mathbb P_m(K).
    \end{displaymath}
\end{enumerate}

%We define the reconstruction operator as follows. For each element $K
%\in \mathcal T_h$, we prescribe its barycenter $\bm x_K$ as the
%collocation point. And we select $\# S(K)$ elements near $K$ to
%form an element patch $S(K)$ for $K$. Specifically, we build $S(K)$ in
%a recursive way that we set $S(K)=\{ K \}$ first, and we add Von
%Neumann neighbours of all elements in $S(K)$ to $S(K)$. Repeat the
%last step until we collect $\# S(K)$ elements in $S(K)$(see Fig
%\ref{fig:buildpatch} for $\# S(K)=12$ case).

{
Given the partition $\MTh$, we define the reconstruction operator as
follows. First in each element $K \in \MTh$, we specify a point $\bm
x_K \in K$ as the collocation point. Here we just let $\bm x_K$ be the
barycenter of $K$. Then for each $K \in \MTh$ we construct an element
patch $S(K)$, which is a set of $K$ itself and some elements around
$K$. Specifically, we construct $S(K)$ in a recursive manner. For
element $K$, we set $S(K) = \left\{ K \right\}$ first, and we enlarge
$S(K)$ by adding all the von Neumann neighbours (adjacent
edge-neighbouring elements) of $S(K)$ into $S(K)$ recursively until we
have collected enough elements into the element patch. We denote by
$\# S(K)$ the cardinality of $S(K)$ and an example of construction of
$S(K)$ with $\# S(K) = 12$ is shown in Fig \ref{fig:buildpatch}.  }

\begin{figure}
  \centering \captionsetup[subfigure]{labelformat=empty}
  \begin{subfigure}{.3\textwidth}
    \begin{tikzpicture}[scale=3.5]
    \draw[fill=red] (0.377541080267955,0.527424438107543) -- (0.51632192714807,0.445641468992195) -- (0.508058617199453,0.641022485885209);
\draw[] (0,0) -- (0.2,0);
\draw[] (0.2,0) -- (0.129566038401955,0.132370390921097);
\draw[] (0,0) -- (0.129566038401955,0.132370390921097);
\draw[] (0,0) -- (0,0.2);
\draw[] (0,0.2) -- (0.129566038401955,0.132370390921097);
\draw[] (0.2,0) -- (0.28924891280161,0.166947666487585);
\draw[] (0.28924891280161,0.166947666487585) -- (0.129566038401955,0.132370390921097);
\draw[] (0.158829078088844,0.295083754154879) -- (0.129566038401955,0.132370390921097);
\draw[] (0,0.2) -- (0.158829078088844,0.295083754154879);
\draw[] (0.158829078088844,0.295083754154879) -- (0.28924891280161,0.166947666487585);
\draw[] (0.4,0) -- (0.28924891280161,0.166947666487585);
\draw[] (0.2,0) -- (0.4,0);
\draw[] (0,0.4) -- (0.158829078088844,0.295083754154879);
\draw[] (0,0.4) -- (0,0.2);
\draw[] (0.158829078088844,0.295083754154879) -- (0.340392050716961,0.360085062591018);
\draw[] (0.28924891280161,0.166947666487585) -- (0.340392050716961,0.360085062591018);
\draw[] (0.4,0) -- (0.507386405366096,0.214426443909476);
\draw[] (0.507386405366096,0.214426443909476) -- (0.28924891280161,0.166947666487585);
\draw[] (0.507386405366096,0.214426443909476) -- (0.340392050716961,0.360085062591018);
\draw[] (0.194389338585002,0.511472795791168) -- (0.158829078088844,0.295083754154879);
\draw[] (0,0.4) -- (0.194389338585002,0.511472795791168);
\draw[] (0.194389338585002,0.511472795791168) -- (0.340392050716961,0.360085062591018);
\draw[] (0.4,0) -- (0.6,0);
\draw[] (0.6,0) -- (0.507386405366096,0.214426443909476);
\draw[] (0,0.6) -- (0.194389338585002,0.511472795791168);
\draw[] (0,0.6) -- (0,0.4);
\draw[] (0.507386405366096,0.214426443909476) -- (0.51632192714807,0.445641468992195);
\draw[] (0.340392050716961,0.360085062591018) -- (0.51632192714807,0.445641468992195);
\draw[] (0.340392050716961,0.360085062591018) -- (0.377541080267955,0.527424438107543);
\draw[] (0.194389338585002,0.511472795791168) -- (0.377541080267955,0.527424438107543);
\draw[] (0.507386405366096,0.214426443909476) -- (0.720367993373896,0.166464907980495);
\draw[] (0.6,0) -- (0.720367993373896,0.166464907980495);
\draw[] (0.377541080267955,0.527424438107543) -- (0.51632192714807,0.445641468992195);
\draw[] (0.194389338585002,0.511472795791168) -- (0.155048548464969,0.719006158403683);
\draw[] (0,0.6) -- (0.155048548464969,0.719006158403683);
\draw[] (0.507386405366096,0.214426443909476) -- (0.686726828176686,0.361868419354705);
\draw[] (0.686726828176686,0.361868419354705) -- (0.51632192714807,0.445641468992195);
\draw[] (0.720367993373896,0.166464907980495) -- (0.686726828176686,0.361868419354705);
\draw[] (0.194389338585002,0.511472795791168) -- (0.329936867383997,0.679347780724589);
\draw[] (0.329936867383997,0.679347780724589) -- (0.377541080267955,0.527424438107543);
\draw[] (0.155048548464969,0.719006158403683) -- (0.329936867383997,0.679347780724589);
\draw[] (0.8,0) -- (0.720367993373896,0.166464907980495);
\draw[] (0.6,0) -- (0.8,0);
\draw[] (0.508058617199453,0.641022485885209) -- (0.51632192714807,0.445641468992195);
\draw[] (0.377541080267955,0.527424438107543) -- (0.508058617199453,0.641022485885209);
\draw[] (0,0.8) -- (0.155048548464969,0.719006158403683);
\draw[] (0,0.8) -- (0,0.6);
\draw[] (0.329936867383997,0.679347780724589) -- (0.508058617199453,0.641022485885209);
\draw[] (0.67973159086246,0.56913545717858) -- (0.686726828176686,0.361868419354705);
\draw[] (0.67973159086246,0.56913545717858) -- (0.51632192714807,0.445641468992195);
\draw[] (0.720367993373896,0.166464907980495) -- (0.854172785710488,0.290767332144404);
\draw[] (0.854172785710488,0.290767332144404) -- (0.686726828176686,0.361868419354705);
\draw[] (0.67973159086246,0.56913545717858) -- (0.508058617199453,0.641022485885209);
\draw[] (0.279589050761926,0.849922371417845) -- (0.155048548464969,0.719006158403683);
\draw[] (0.279589050761926,0.849922371417845) -- (0.329936867383997,0.679347780724589);
\draw[] (0.8,0) -- (0.874828381578865,0.131478330490222);
\draw[] (0.720367993373896,0.166464907980495) -- (0.874828381578865,0.131478330490222);
\draw[] (0.854172785710488,0.290767332144404) -- (0.874828381578865,0.131478330490222);
\draw[] (0,0.8) -- (0.126891091107252,0.873742007888985);
\draw[] (0.155048548464969,0.719006158403683) -- (0.126891091107252,0.873742007888985);
\draw[] (0.466174482542919,0.827842017700953) -- (0.508058617199453,0.641022485885209);
\draw[] (0.466174482542919,0.827842017700953) -- (0.329936867383997,0.679347780724589);
\draw[] (0.279589050761926,0.849922371417845) -- (0.126891091107252,0.873742007888985);
\draw[] (0.844029404278513,0.484446265452234) -- (0.67973159086246,0.56913545717858);
\draw[] (0.844029404278513,0.484446265452234) -- (0.686726828176686,0.361868419354705);
\draw[] (0.844029404278513,0.484446265452234) -- (0.854172785710488,0.290767332144404);
\draw[] (0.279589050761926,0.849922371417845) -- (0.466174482542919,0.827842017700953);
\draw[] (0.68033851010831,0.796997572945071) -- (0.67973159086246,0.56913545717858);
\draw[] (0.68033851010831,0.796997572945071) -- (0.508058617199453,0.641022485885209);
\draw[] (0.8,0) -- (1,0);
\draw[] (1,0) -- (0.874828381578865,0.131478330490222);
\draw[] (0.466174482542919,0.827842017700953) -- (0.68033851010831,0.796997572945071);
\draw[] (0,1) -- (0,0.8);
\draw[] (0,1) -- (0.126891091107252,0.873742007888985);
\draw[] (1,0.2) -- (0.854172785710488,0.290767332144404);
\draw[] (1,0.2) -- (0.874828381578865,0.131478330490222);
\draw[] (0.84467940856602,0.684532224021434) -- (0.844029404278513,0.484446265452234);
\draw[] (0.84467940856602,0.684532224021434) -- (0.67973159086246,0.56913545717858);
\draw[] (0.84467940856602,0.684532224021434) -- (0.68033851010831,0.796997572945071);
\draw[] (0.2,1) -- (0.126891091107252,0.873742007888985);
\draw[] (0.2,1) -- (0.279589050761926,0.849922371417845);
\draw[] (0.4,1) -- (0.279589050761926,0.849922371417845);
\draw[] (0.4,1) -- (0.466174482542919,0.827842017700953);
\draw[] (1,0.4) -- (0.844029404278513,0.484446265452234);
\draw[] (1,0.4) -- (0.854172785710488,0.290767332144404);
\draw[] (1,0) -- (1,0.2);
\draw[] (0.2,1) -- (0,1);
\draw[] (1,0.2) -- (1,0.4);
\draw[] (0.4,1) -- (0.2,1);
\draw[] (0.6,1) -- (0.466174482542919,0.827842017700953);
\draw[] (0.6,1) -- (0.68033851010831,0.796997572945071);
\draw[] (1,0.6) -- (0.84467940856602,0.684532224021434);
\draw[] (1,0.6) -- (0.844029404278513,0.484446265452234);
\draw[] (0.6,1) -- (0.4,1);
\draw[] (0.68033851010831,0.796997572945071) -- (0.864919419945519,0.8563330434245);
\draw[] (0.84467940856602,0.684532224021434) -- (0.864919419945519,0.8563330434245);
\draw[] (1,0.4) -- (1,0.6);
\draw[] (0.8,1) -- (0.68033851010831,0.796997572945071);
\draw[] (0.8,1) -- (0.6,1);
\draw[] (0.8,1) -- (0.864919419945519,0.8563330434245);
\draw[] (1,0.6) -- (1,0.8);
\draw[] (1,0.8) -- (0.84467940856602,0.684532224021434);
\draw[] (1,0.8) -- (0.864919419945519,0.8563330434245);
\draw[] (1,1) -- (0.8,1);
\draw[] (1,1) -- (0.864919419945519,0.8563330434245);
\draw[] (1,0.8) -- (1,1);
  \end{tikzpicture}
  \caption{Step 1}
  \end{subfigure}
  \hspace{30pt}
  \begin{subfigure}{.3\textwidth}
    \begin{tikzpicture}[scale=3.5]
    \draw[fill=red] (0.377541080267955,0.527424438107543) -- (0.51632192714807,0.445641468992195) -- (0.508058617199453,0.641022485885209);
\draw[fill=cyan] (0.377541080267955,0.527424438107543) -- (0.340392050716961,0.360085062591018) -- (0.51632192714807,0.445641468992195);
\draw[fill=cyan] (0.51632192714807,0.445641468992195) -- (0.67973159086246,0.56913545717858) -- (0.508058617199453,0.641022485885209);
\draw[fill=cyan] (0.508058617199453,0.641022485885209) -- (0.329936867383997,0.679347780724589) -- (0.377541080267955,0.527424438107543);
\draw[] (0,0) -- (0.2,0);
\draw[] (0.2,0) -- (0.129566038401955,0.132370390921097);
\draw[] (0,0) -- (0.129566038401955,0.132370390921097);
\draw[] (0,0) -- (0,0.2);
\draw[] (0,0.2) -- (0.129566038401955,0.132370390921097);
\draw[] (0.2,0) -- (0.28924891280161,0.166947666487585);
\draw[] (0.28924891280161,0.166947666487585) -- (0.129566038401955,0.132370390921097);
\draw[] (0.158829078088844,0.295083754154879) -- (0.129566038401955,0.132370390921097);
\draw[] (0,0.2) -- (0.158829078088844,0.295083754154879);
\draw[] (0.158829078088844,0.295083754154879) -- (0.28924891280161,0.166947666487585);
\draw[] (0.4,0) -- (0.28924891280161,0.166947666487585);
\draw[] (0.2,0) -- (0.4,0);
\draw[] (0,0.4) -- (0.158829078088844,0.295083754154879);
\draw[] (0,0.4) -- (0,0.2);
\draw[] (0.158829078088844,0.295083754154879) -- (0.340392050716961,0.360085062591018);
\draw[] (0.28924891280161,0.166947666487585) -- (0.340392050716961,0.360085062591018);
\draw[] (0.4,0) -- (0.507386405366096,0.214426443909476);
\draw[] (0.507386405366096,0.214426443909476) -- (0.28924891280161,0.166947666487585);
\draw[] (0.507386405366096,0.214426443909476) -- (0.340392050716961,0.360085062591018);
\draw[] (0.194389338585002,0.511472795791168) -- (0.158829078088844,0.295083754154879);
\draw[] (0,0.4) -- (0.194389338585002,0.511472795791168);
\draw[] (0.194389338585002,0.511472795791168) -- (0.340392050716961,0.360085062591018);
\draw[] (0.4,0) -- (0.6,0);
\draw[] (0.6,0) -- (0.507386405366096,0.214426443909476);
\draw[] (0,0.6) -- (0.194389338585002,0.511472795791168);
\draw[] (0,0.6) -- (0,0.4);
\draw[] (0.507386405366096,0.214426443909476) -- (0.51632192714807,0.445641468992195);
\draw[] (0.340392050716961,0.360085062591018) -- (0.51632192714807,0.445641468992195);
\draw[] (0.340392050716961,0.360085062591018) -- (0.377541080267955,0.527424438107543);
\draw[] (0.194389338585002,0.511472795791168) -- (0.377541080267955,0.527424438107543);
\draw[] (0.507386405366096,0.214426443909476) -- (0.720367993373896,0.166464907980495);
\draw[] (0.6,0) -- (0.720367993373896,0.166464907980495);
\draw[] (0.377541080267955,0.527424438107543) -- (0.51632192714807,0.445641468992195);
\draw[] (0.194389338585002,0.511472795791168) -- (0.155048548464969,0.719006158403683);
\draw[] (0,0.6) -- (0.155048548464969,0.719006158403683);
\draw[] (0.507386405366096,0.214426443909476) -- (0.686726828176686,0.361868419354705);
\draw[] (0.686726828176686,0.361868419354705) -- (0.51632192714807,0.445641468992195);
\draw[] (0.720367993373896,0.166464907980495) -- (0.686726828176686,0.361868419354705);
\draw[] (0.194389338585002,0.511472795791168) -- (0.329936867383997,0.679347780724589);
\draw[] (0.329936867383997,0.679347780724589) -- (0.377541080267955,0.527424438107543);
\draw[] (0.155048548464969,0.719006158403683) -- (0.329936867383997,0.679347780724589);
\draw[] (0.8,0) -- (0.720367993373896,0.166464907980495);
\draw[] (0.6,0) -- (0.8,0);
\draw[] (0.508058617199453,0.641022485885209) -- (0.51632192714807,0.445641468992195);
\draw[] (0.377541080267955,0.527424438107543) -- (0.508058617199453,0.641022485885209);
\draw[] (0,0.8) -- (0.155048548464969,0.719006158403683);
\draw[] (0,0.8) -- (0,0.6);
\draw[] (0.329936867383997,0.679347780724589) -- (0.508058617199453,0.641022485885209);
\draw[] (0.67973159086246,0.56913545717858) -- (0.686726828176686,0.361868419354705);
\draw[] (0.67973159086246,0.56913545717858) -- (0.51632192714807,0.445641468992195);
\draw[] (0.720367993373896,0.166464907980495) -- (0.854172785710488,0.290767332144404);
\draw[] (0.854172785710488,0.290767332144404) -- (0.686726828176686,0.361868419354705);
\draw[] (0.67973159086246,0.56913545717858) -- (0.508058617199453,0.641022485885209);
\draw[] (0.279589050761926,0.849922371417845) -- (0.155048548464969,0.719006158403683);
\draw[] (0.279589050761926,0.849922371417845) -- (0.329936867383997,0.679347780724589);
\draw[] (0.8,0) -- (0.874828381578865,0.131478330490222);
\draw[] (0.720367993373896,0.166464907980495) -- (0.874828381578865,0.131478330490222);
\draw[] (0.854172785710488,0.290767332144404) -- (0.874828381578865,0.131478330490222);
\draw[] (0,0.8) -- (0.126891091107252,0.873742007888985);
\draw[] (0.155048548464969,0.719006158403683) -- (0.126891091107252,0.873742007888985);
\draw[] (0.466174482542919,0.827842017700953) -- (0.508058617199453,0.641022485885209);
\draw[] (0.466174482542919,0.827842017700953) -- (0.329936867383997,0.679347780724589);
\draw[] (0.279589050761926,0.849922371417845) -- (0.126891091107252,0.873742007888985);
\draw[] (0.844029404278513,0.484446265452234) -- (0.67973159086246,0.56913545717858);
\draw[] (0.844029404278513,0.484446265452234) -- (0.686726828176686,0.361868419354705);
\draw[] (0.844029404278513,0.484446265452234) -- (0.854172785710488,0.290767332144404);
\draw[] (0.279589050761926,0.849922371417845) -- (0.466174482542919,0.827842017700953);
\draw[] (0.68033851010831,0.796997572945071) -- (0.67973159086246,0.56913545717858);
\draw[] (0.68033851010831,0.796997572945071) -- (0.508058617199453,0.641022485885209);
\draw[] (0.8,0) -- (1,0);
\draw[] (1,0) -- (0.874828381578865,0.131478330490222);
\draw[] (0.466174482542919,0.827842017700953) -- (0.68033851010831,0.796997572945071);
\draw[] (0,1) -- (0,0.8);
\draw[] (0,1) -- (0.126891091107252,0.873742007888985);
\draw[] (1,0.2) -- (0.854172785710488,0.290767332144404);
\draw[] (1,0.2) -- (0.874828381578865,0.131478330490222);
\draw[] (0.84467940856602,0.684532224021434) -- (0.844029404278513,0.484446265452234);
\draw[] (0.84467940856602,0.684532224021434) -- (0.67973159086246,0.56913545717858);
\draw[] (0.84467940856602,0.684532224021434) -- (0.68033851010831,0.796997572945071);
\draw[] (0.2,1) -- (0.126891091107252,0.873742007888985);
\draw[] (0.2,1) -- (0.279589050761926,0.849922371417845);
\draw[] (0.4,1) -- (0.279589050761926,0.849922371417845);
\draw[] (0.4,1) -- (0.466174482542919,0.827842017700953);
\draw[] (1,0.4) -- (0.844029404278513,0.484446265452234);
\draw[] (1,0.4) -- (0.854172785710488,0.290767332144404);
\draw[] (1,0) -- (1,0.2);
\draw[] (0.2,1) -- (0,1);
\draw[] (1,0.2) -- (1,0.4);
\draw[] (0.4,1) -- (0.2,1);
\draw[] (0.6,1) -- (0.466174482542919,0.827842017700953);
\draw[] (0.6,1) -- (0.68033851010831,0.796997572945071);
\draw[] (1,0.6) -- (0.84467940856602,0.684532224021434);
\draw[] (1,0.6) -- (0.844029404278513,0.484446265452234);
\draw[] (0.6,1) -- (0.4,1);
\draw[] (0.68033851010831,0.796997572945071) -- (0.864919419945519,0.8563330434245);
\draw[] (0.84467940856602,0.684532224021434) -- (0.864919419945519,0.8563330434245);
\draw[] (1,0.4) -- (1,0.6);
\draw[] (0.8,1) -- (0.68033851010831,0.796997572945071);
\draw[] (0.8,1) -- (0.6,1);
\draw[] (0.8,1) -- (0.864919419945519,0.8563330434245);
\draw[] (1,0.6) -- (1,0.8);
\draw[] (1,0.8) -- (0.84467940856602,0.684532224021434);
\draw[] (1,0.8) -- (0.864919419945519,0.8563330434245);
\draw[] (1,1) -- (0.8,1);
\draw[] (1,1) -- (0.864919419945519,0.8563330434245);
\draw[] (1,0.8) -- (1,1);
  \end{tikzpicture}
  \caption{Step 2}
  \end{subfigure}

  \vspace{20pt}

  \begin{subfigure}{.3\textwidth}
    \begin{tikzpicture}[scale=3.5]
    \draw[fill=red] (0.377541080267955,0.527424438107543) -- (0.51632192714807,0.445641468992195) -- (0.508058617199453,0.641022485885209);
\draw[fill=cyan] (0.377541080267955,0.527424438107543) -- (0.340392050716961,0.360085062591018) -- (0.51632192714807,0.445641468992195);
\draw[fill=cyan] (0.51632192714807,0.445641468992195) -- (0.67973159086246,0.56913545717858) -- (0.508058617199453,0.641022485885209);
\draw[fill=cyan] (0.508058617199453,0.641022485885209) -- (0.329936867383997,0.679347780724589) -- (0.377541080267955,0.527424438107543);
\draw[fill=cyan] (0.51632192714807,0.445641468992195) -- (0.340392050716961,0.360085062591018) -- (0.507386405366096,0.214426443909476);
\draw[fill=cyan] (0.377541080267955,0.527424438107543) -- (0.194389338585002,0.511472795791168) -- (0.340392050716961,0.360085062591018);
\draw[fill=cyan] (0.377541080267955,0.527424438107543) -- (0.329936867383997,0.679347780724589) -- (0.194389338585002,0.511472795791168);
\draw[fill=cyan] (0.51632192714807,0.445641468992195) -- (0.686726828176686,0.361868419354705) -- (0.67973159086246,0.56913545717858);
\draw[fill=cyan] (0.466174482542919,0.827842017700953) -- (0.329936867383997,0.679347780724589) -- (0.508058617199453,0.641022485885209);
\draw[fill=cyan] (0.68033851010831,0.796997572945071) -- (0.508058617199453,0.641022485885209) -- (0.67973159086246,0.56913545717858);
\draw[] (0,0) -- (0.2,0);
\draw[] (0.2,0) -- (0.129566038401955,0.132370390921097);
\draw[] (0,0) -- (0.129566038401955,0.132370390921097);
\draw[] (0,0) -- (0,0.2);
\draw[] (0,0.2) -- (0.129566038401955,0.132370390921097);
\draw[] (0.2,0) -- (0.28924891280161,0.166947666487585);
\draw[] (0.28924891280161,0.166947666487585) -- (0.129566038401955,0.132370390921097);
\draw[] (0.158829078088844,0.295083754154879) -- (0.129566038401955,0.132370390921097);
\draw[] (0,0.2) -- (0.158829078088844,0.295083754154879);
\draw[] (0.158829078088844,0.295083754154879) -- (0.28924891280161,0.166947666487585);
\draw[] (0.4,0) -- (0.28924891280161,0.166947666487585);
\draw[] (0.2,0) -- (0.4,0);
\draw[] (0,0.4) -- (0.158829078088844,0.295083754154879);
\draw[] (0,0.4) -- (0,0.2);
\draw[] (0.158829078088844,0.295083754154879) -- (0.340392050716961,0.360085062591018);
\draw[] (0.28924891280161,0.166947666487585) -- (0.340392050716961,0.360085062591018);
\draw[] (0.4,0) -- (0.507386405366096,0.214426443909476);
\draw[] (0.507386405366096,0.214426443909476) -- (0.28924891280161,0.166947666487585);
\draw[] (0.507386405366096,0.214426443909476) -- (0.340392050716961,0.360085062591018);
\draw[] (0.194389338585002,0.511472795791168) -- (0.158829078088844,0.295083754154879);
\draw[] (0,0.4) -- (0.194389338585002,0.511472795791168);
\draw[] (0.194389338585002,0.511472795791168) -- (0.340392050716961,0.360085062591018);
\draw[] (0.4,0) -- (0.6,0);
\draw[] (0.6,0) -- (0.507386405366096,0.214426443909476);
\draw[] (0,0.6) -- (0.194389338585002,0.511472795791168);
\draw[] (0,0.6) -- (0,0.4);
\draw[] (0.507386405366096,0.214426443909476) -- (0.51632192714807,0.445641468992195);
\draw[] (0.340392050716961,0.360085062591018) -- (0.51632192714807,0.445641468992195);
\draw[] (0.340392050716961,0.360085062591018) -- (0.377541080267955,0.527424438107543);
\draw[] (0.194389338585002,0.511472795791168) -- (0.377541080267955,0.527424438107543);
\draw[] (0.507386405366096,0.214426443909476) -- (0.720367993373896,0.166464907980495);
\draw[] (0.6,0) -- (0.720367993373896,0.166464907980495);
\draw[] (0.377541080267955,0.527424438107543) -- (0.51632192714807,0.445641468992195);
\draw[] (0.194389338585002,0.511472795791168) -- (0.155048548464969,0.719006158403683);
\draw[] (0,0.6) -- (0.155048548464969,0.719006158403683);
\draw[] (0.507386405366096,0.214426443909476) -- (0.686726828176686,0.361868419354705);
\draw[] (0.686726828176686,0.361868419354705) -- (0.51632192714807,0.445641468992195);
\draw[] (0.720367993373896,0.166464907980495) -- (0.686726828176686,0.361868419354705);
\draw[] (0.194389338585002,0.511472795791168) -- (0.329936867383997,0.679347780724589);
\draw[] (0.329936867383997,0.679347780724589) -- (0.377541080267955,0.527424438107543);
\draw[] (0.155048548464969,0.719006158403683) -- (0.329936867383997,0.679347780724589);
\draw[] (0.8,0) -- (0.720367993373896,0.166464907980495);
\draw[] (0.6,0) -- (0.8,0);
\draw[] (0.508058617199453,0.641022485885209) -- (0.51632192714807,0.445641468992195);
\draw[] (0.377541080267955,0.527424438107543) -- (0.508058617199453,0.641022485885209);
\draw[] (0,0.8) -- (0.155048548464969,0.719006158403683);
\draw[] (0,0.8) -- (0,0.6);
\draw[] (0.329936867383997,0.679347780724589) -- (0.508058617199453,0.641022485885209);
\draw[] (0.67973159086246,0.56913545717858) -- (0.686726828176686,0.361868419354705);
\draw[] (0.67973159086246,0.56913545717858) -- (0.51632192714807,0.445641468992195);
\draw[] (0.720367993373896,0.166464907980495) -- (0.854172785710488,0.290767332144404);
\draw[] (0.854172785710488,0.290767332144404) -- (0.686726828176686,0.361868419354705);
\draw[] (0.67973159086246,0.56913545717858) -- (0.508058617199453,0.641022485885209);
\draw[] (0.279589050761926,0.849922371417845) -- (0.155048548464969,0.719006158403683);
\draw[] (0.279589050761926,0.849922371417845) -- (0.329936867383997,0.679347780724589);
\draw[] (0.8,0) -- (0.874828381578865,0.131478330490222);
\draw[] (0.720367993373896,0.166464907980495) -- (0.874828381578865,0.131478330490222);
\draw[] (0.854172785710488,0.290767332144404) -- (0.874828381578865,0.131478330490222);
\draw[] (0,0.8) -- (0.126891091107252,0.873742007888985);
\draw[] (0.155048548464969,0.719006158403683) -- (0.126891091107252,0.873742007888985);
\draw[] (0.466174482542919,0.827842017700953) -- (0.508058617199453,0.641022485885209);
\draw[] (0.466174482542919,0.827842017700953) -- (0.329936867383997,0.679347780724589);
\draw[] (0.279589050761926,0.849922371417845) -- (0.126891091107252,0.873742007888985);
\draw[] (0.844029404278513,0.484446265452234) -- (0.67973159086246,0.56913545717858);
\draw[] (0.844029404278513,0.484446265452234) -- (0.686726828176686,0.361868419354705);
\draw[] (0.844029404278513,0.484446265452234) -- (0.854172785710488,0.290767332144404);
\draw[] (0.279589050761926,0.849922371417845) -- (0.466174482542919,0.827842017700953);
\draw[] (0.68033851010831,0.796997572945071) -- (0.67973159086246,0.56913545717858);
\draw[] (0.68033851010831,0.796997572945071) -- (0.508058617199453,0.641022485885209);
\draw[] (0.8,0) -- (1,0);
\draw[] (1,0) -- (0.874828381578865,0.131478330490222);
\draw[] (0.466174482542919,0.827842017700953) -- (0.68033851010831,0.796997572945071);
\draw[] (0,1) -- (0,0.8);
\draw[] (0,1) -- (0.126891091107252,0.873742007888985);
\draw[] (1,0.2) -- (0.854172785710488,0.290767332144404);
\draw[] (1,0.2) -- (0.874828381578865,0.131478330490222);
\draw[] (0.84467940856602,0.684532224021434) -- (0.844029404278513,0.484446265452234);
\draw[] (0.84467940856602,0.684532224021434) -- (0.67973159086246,0.56913545717858);
\draw[] (0.84467940856602,0.684532224021434) -- (0.68033851010831,0.796997572945071);
\draw[] (0.2,1) -- (0.126891091107252,0.873742007888985);
\draw[] (0.2,1) -- (0.279589050761926,0.849922371417845);
\draw[] (0.4,1) -- (0.279589050761926,0.849922371417845);
\draw[] (0.4,1) -- (0.466174482542919,0.827842017700953);
\draw[] (1,0.4) -- (0.844029404278513,0.484446265452234);
\draw[] (1,0.4) -- (0.854172785710488,0.290767332144404);
\draw[] (1,0) -- (1,0.2);
\draw[] (0.2,1) -- (0,1);
\draw[] (1,0.2) -- (1,0.4);
\draw[] (0.4,1) -- (0.2,1);
\draw[] (0.6,1) -- (0.466174482542919,0.827842017700953);
\draw[] (0.6,1) -- (0.68033851010831,0.796997572945071);
\draw[] (1,0.6) -- (0.84467940856602,0.684532224021434);
\draw[] (1,0.6) -- (0.844029404278513,0.484446265452234);
\draw[] (0.6,1) -- (0.4,1);
\draw[] (0.68033851010831,0.796997572945071) -- (0.864919419945519,0.8563330434245);
\draw[] (0.84467940856602,0.684532224021434) -- (0.864919419945519,0.8563330434245);
\draw[] (1,0.4) -- (1,0.6);
\draw[] (0.8,1) -- (0.68033851010831,0.796997572945071);
\draw[] (0.8,1) -- (0.6,1);
\draw[] (0.8,1) -- (0.864919419945519,0.8563330434245);
\draw[] (1,0.6) -- (1,0.8);
\draw[] (1,0.8) -- (0.84467940856602,0.684532224021434);
\draw[] (1,0.8) -- (0.864919419945519,0.8563330434245);
\draw[] (1,1) -- (0.8,1);
\draw[] (1,1) -- (0.864919419945519,0.8563330434245);
\draw[] (1,0.8) -- (1,1);
  \end{tikzpicture}
  \caption{Step 3}
  \end{subfigure}
  \hspace{30pt}
  \begin{subfigure}{.3\textwidth}
    \begin{tikzpicture}[scale=3.5]
    \draw[fill=red] (0.377541080267955,0.527424438107543) -- (0.51632192714807,0.445641468992195) -- (0.508058617199453,0.641022485885209);
\draw[fill=cyan] (0.51632192714807,0.445641468992195) -- (0.340392050716961,0.360085062591018) -- (0.507386405366096,0.214426443909476);
\draw[fill=cyan] (0.377541080267955,0.527424438107543) -- (0.194389338585002,0.511472795791168) -- (0.340392050716961,0.360085062591018);
\draw[fill=cyan] (0.377541080267955,0.527424438107543) -- (0.340392050716961,0.360085062591018) -- (0.51632192714807,0.445641468992195);
\draw[fill=cyan] (0.51632192714807,0.445641468992195) -- (0.507386405366096,0.214426443909476) -- (0.686726828176686,0.361868419354705);
\draw[fill=cyan] (0.377541080267955,0.527424438107543) -- (0.329936867383997,0.679347780724589) -- (0.194389338585002,0.511472795791168);
\draw[fill=cyan] (0.377541080267955,0.527424438107543) -- (0.51632192714807,0.445641468992195) -- (0.508058617199453,0.641022485885209);
\draw[fill=cyan] (0.508058617199453,0.641022485885209) -- (0.329936867383997,0.679347780724589) -- (0.377541080267955,0.527424438107543);
\draw[fill=cyan] (0.51632192714807,0.445641468992195) -- (0.686726828176686,0.361868419354705) -- (0.67973159086246,0.56913545717858);
\draw[fill=cyan] (0.51632192714807,0.445641468992195) -- (0.67973159086246,0.56913545717858) -- (0.508058617199453,0.641022485885209);
\draw[fill=cyan] (0.466174482542919,0.827842017700953) -- (0.329936867383997,0.679347780724589) -- (0.508058617199453,0.641022485885209);
\draw[fill=cyan] (0.68033851010831,0.796997572945071) -- (0.508058617199453,0.641022485885209) -- (0.67973159086246,0.56913545717858);
\draw[fill=cyan] (0.68033851010831,0.796997572945071) -- (0.466174482542919,0.827842017700953) -- (0.508058617199453,0.641022485885209);
\draw[] (0,0) -- (0.2,0);
\draw[] (0.2,0) -- (0.129566038401955,0.132370390921097);
\draw[] (0,0) -- (0.129566038401955,0.132370390921097);
\draw[] (0,0) -- (0,0.2);
\draw[] (0,0.2) -- (0.129566038401955,0.132370390921097);
\draw[] (0.2,0) -- (0.28924891280161,0.166947666487585);
\draw[] (0.28924891280161,0.166947666487585) -- (0.129566038401955,0.132370390921097);
\draw[] (0.158829078088844,0.295083754154879) -- (0.129566038401955,0.132370390921097);
\draw[] (0,0.2) -- (0.158829078088844,0.295083754154879);
\draw[] (0.158829078088844,0.295083754154879) -- (0.28924891280161,0.166947666487585);
\draw[] (0.4,0) -- (0.28924891280161,0.166947666487585);
\draw[] (0.2,0) -- (0.4,0);
\draw[] (0,0.4) -- (0.158829078088844,0.295083754154879);
\draw[] (0,0.4) -- (0,0.2);
\draw[] (0.158829078088844,0.295083754154879) -- (0.340392050716961,0.360085062591018);
\draw[] (0.28924891280161,0.166947666487585) -- (0.340392050716961,0.360085062591018);
\draw[] (0.4,0) -- (0.507386405366096,0.214426443909476);
\draw[] (0.507386405366096,0.214426443909476) -- (0.28924891280161,0.166947666487585);
\draw[] (0.507386405366096,0.214426443909476) -- (0.340392050716961,0.360085062591018);
\draw[] (0.194389338585002,0.511472795791168) -- (0.158829078088844,0.295083754154879);
\draw[] (0,0.4) -- (0.194389338585002,0.511472795791168);
\draw[] (0.194389338585002,0.511472795791168) -- (0.340392050716961,0.360085062591018);
\draw[] (0.4,0) -- (0.6,0);
\draw[] (0.6,0) -- (0.507386405366096,0.214426443909476);
\draw[] (0,0.6) -- (0.194389338585002,0.511472795791168);
\draw[] (0,0.6) -- (0,0.4);
\draw[] (0.507386405366096,0.214426443909476) -- (0.51632192714807,0.445641468992195);
\draw[] (0.340392050716961,0.360085062591018) -- (0.51632192714807,0.445641468992195);
\draw[] (0.340392050716961,0.360085062591018) -- (0.377541080267955,0.527424438107543);
\draw[] (0.194389338585002,0.511472795791168) -- (0.377541080267955,0.527424438107543);
\draw[] (0.507386405366096,0.214426443909476) -- (0.720367993373896,0.166464907980495);
\draw[] (0.6,0) -- (0.720367993373896,0.166464907980495);
\draw[] (0.377541080267955,0.527424438107543) -- (0.51632192714807,0.445641468992195);
\draw[] (0.194389338585002,0.511472795791168) -- (0.155048548464969,0.719006158403683);
\draw[] (0,0.6) -- (0.155048548464969,0.719006158403683);
\draw[] (0.507386405366096,0.214426443909476) -- (0.686726828176686,0.361868419354705);
\draw[] (0.686726828176686,0.361868419354705) -- (0.51632192714807,0.445641468992195);
\draw[] (0.720367993373896,0.166464907980495) -- (0.686726828176686,0.361868419354705);
\draw[] (0.194389338585002,0.511472795791168) -- (0.329936867383997,0.679347780724589);
\draw[] (0.329936867383997,0.679347780724589) -- (0.377541080267955,0.527424438107543);
\draw[] (0.155048548464969,0.719006158403683) -- (0.329936867383997,0.679347780724589);
\draw[] (0.8,0) -- (0.720367993373896,0.166464907980495);
\draw[] (0.6,0) -- (0.8,0);
\draw[] (0.508058617199453,0.641022485885209) -- (0.51632192714807,0.445641468992195);
\draw[] (0.377541080267955,0.527424438107543) -- (0.508058617199453,0.641022485885209);
\draw[] (0,0.8) -- (0.155048548464969,0.719006158403683);
\draw[] (0,0.8) -- (0,0.6);
\draw[] (0.329936867383997,0.679347780724589) -- (0.508058617199453,0.641022485885209);
\draw[] (0.67973159086246,0.56913545717858) -- (0.686726828176686,0.361868419354705);
\draw[] (0.67973159086246,0.56913545717858) -- (0.51632192714807,0.445641468992195);
\draw[] (0.720367993373896,0.166464907980495) -- (0.854172785710488,0.290767332144404);
\draw[] (0.854172785710488,0.290767332144404) -- (0.686726828176686,0.361868419354705);
\draw[] (0.67973159086246,0.56913545717858) -- (0.508058617199453,0.641022485885209);
\draw[] (0.279589050761926,0.849922371417845) -- (0.155048548464969,0.719006158403683);
\draw[] (0.279589050761926,0.849922371417845) -- (0.329936867383997,0.679347780724589);
\draw[] (0.8,0) -- (0.874828381578865,0.131478330490222);
\draw[] (0.720367993373896,0.166464907980495) -- (0.874828381578865,0.131478330490222);
\draw[] (0.854172785710488,0.290767332144404) -- (0.874828381578865,0.131478330490222);
\draw[] (0,0.8) -- (0.126891091107252,0.873742007888985);
\draw[] (0.155048548464969,0.719006158403683) -- (0.126891091107252,0.873742007888985);
\draw[] (0.466174482542919,0.827842017700953) -- (0.508058617199453,0.641022485885209);
\draw[] (0.466174482542919,0.827842017700953) -- (0.329936867383997,0.679347780724589);
\draw[] (0.279589050761926,0.849922371417845) -- (0.126891091107252,0.873742007888985);
\draw[] (0.844029404278513,0.484446265452234) -- (0.67973159086246,0.56913545717858);
\draw[] (0.844029404278513,0.484446265452234) -- (0.686726828176686,0.361868419354705);
\draw[] (0.844029404278513,0.484446265452234) -- (0.854172785710488,0.290767332144404);
\draw[] (0.279589050761926,0.849922371417845) -- (0.466174482542919,0.827842017700953);
\draw[] (0.68033851010831,0.796997572945071) -- (0.67973159086246,0.56913545717858);
\draw[] (0.68033851010831,0.796997572945071) -- (0.508058617199453,0.641022485885209);
\draw[] (0.8,0) -- (1,0);
\draw[] (1,0) -- (0.874828381578865,0.131478330490222);
\draw[] (0.466174482542919,0.827842017700953) -- (0.68033851010831,0.796997572945071);
\draw[] (0,1) -- (0,0.8);
\draw[] (0,1) -- (0.126891091107252,0.873742007888985);
\draw[] (1,0.2) -- (0.854172785710488,0.290767332144404);
\draw[] (1,0.2) -- (0.874828381578865,0.131478330490222);
\draw[] (0.84467940856602,0.684532224021434) -- (0.844029404278513,0.484446265452234);
\draw[] (0.84467940856602,0.684532224021434) -- (0.67973159086246,0.56913545717858);
\draw[] (0.84467940856602,0.684532224021434) -- (0.68033851010831,0.796997572945071);
\draw[] (0.2,1) -- (0.126891091107252,0.873742007888985);
\draw[] (0.2,1) -- (0.279589050761926,0.849922371417845);
\draw[] (0.4,1) -- (0.279589050761926,0.849922371417845);
\draw[] (0.4,1) -- (0.466174482542919,0.827842017700953);
\draw[] (1,0.4) -- (0.844029404278513,0.484446265452234);
\draw[] (1,0.4) -- (0.854172785710488,0.290767332144404);
\draw[] (1,0) -- (1,0.2);
\draw[] (0.2,1) -- (0,1);
\draw[] (1,0.2) -- (1,0.4);
\draw[] (0.4,1) -- (0.2,1);
\draw[] (0.6,1) -- (0.466174482542919,0.827842017700953);
\draw[] (0.6,1) -- (0.68033851010831,0.796997572945071);
\draw[] (1,0.6) -- (0.84467940856602,0.684532224021434);
\draw[] (1,0.6) -- (0.844029404278513,0.484446265452234);
\draw[] (0.6,1) -- (0.4,1);
\draw[] (0.68033851010831,0.796997572945071) -- (0.864919419945519,0.8563330434245);
\draw[] (0.84467940856602,0.684532224021434) -- (0.864919419945519,0.8563330434245);
\draw[] (1,0.4) -- (1,0.6);
\draw[] (0.8,1) -- (0.68033851010831,0.796997572945071);
\draw[] (0.8,1) -- (0.6,1);
\draw[] (0.8,1) -- (0.864919419945519,0.8563330434245);
\draw[] (1,0.6) -- (1,0.8);
\draw[] (1,0.8) -- (0.84467940856602,0.684532224021434);
\draw[] (1,0.8) -- (0.864919419945519,0.8563330434245);
\draw[] (1,1) -- (0.8,1);
\draw[] (1,1) -- (0.864919419945519,0.8563330434245);
\draw[] (1,0.8) -- (1,1);
\draw[fill=red] (0.377541080267955,0.527424438107543) -- (0.51632192714807,0.445641468992195) -- (0.508058617199453,0.641022485885209);
  \end{tikzpicture}
  \caption{Step 4}
  \end{subfigure}
  \caption{Build patch for $\# S(K) = 12$}
  \label{fig:buildpatch}
\end{figure}
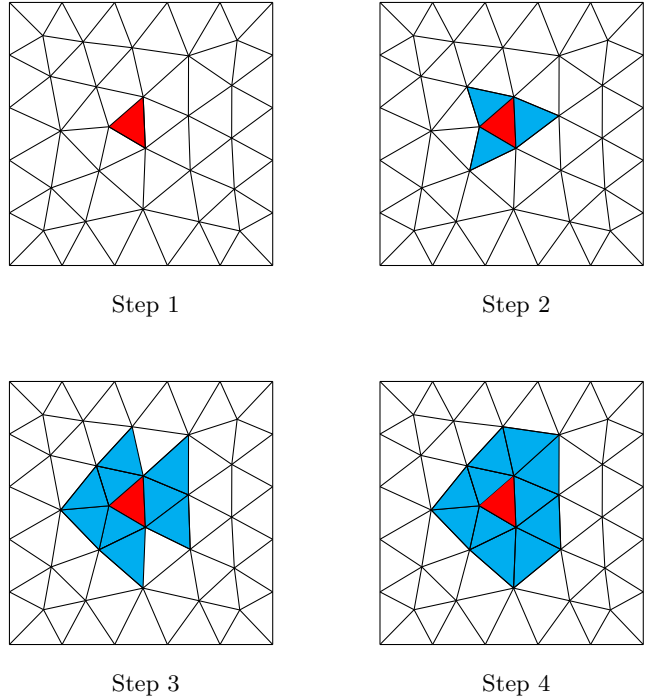

For element $K$, we collect all collocation points in a set $\mathcal
I_K$:
\begin{displaymath}
  \mathcal I_K\triangleq\left\{\bm x_{\widetilde K}\ |\ \bm
  x_{\widetilde K}\ \text{is the barycenter of}\ \widetilde
  K,\ \forall\widetilde K\in S(K)\right\}.
\end{displaymath}

Let $U_h$ be the space consisting of piecewise constant functions:
\begin{displaymath}
    U_h\triangleq\left\{ v\in L^2(\Omega)\ \big |\ v|_K \in \mathbb
    P_0(K),\ \forall K\in \mathcal T_h\right\},
\end{displaymath}
where $\mathbb P_n$ is the polynomial space of degree not greater than
$n$.  For any $v\in U_h$, we reconstruct a $m$th-order polynomial
denoted by $\mathcal R_K^m v$ on $S(K)$ by the following least squares
problem:
\begin{equation}
  \mathcal R_K^mv= \mathop{\arg\min}_{p\in\mathbb P_m(S(K))}
  \ \sum_{\bm x\in \mathcal I_K}|v(\bm x)-p(\bm x)|^2.
  \label{eq:lsproblem}
\end{equation}

The uniqueness condition for the problem \eqref{eq:lsproblem} is
provided by the condition $\# S(K)\geq\text{dim}(\mathbb P_m) $ and
the following assumption \cite{li2012efficient, li2016discontinuous}:
\begin{assumption}
  For $\forall K\in\mathcal T_h$ and $\forall p \in \mathbb
  P_m(S(K))$, problem \eqref{eq:lsproblem} satisfies
  \begin{displaymath}
    p|_{\mathcal I(K)}=\bm0\quad\Longrightarrow\quad p|_{S(K)}\equiv0.
  \end{displaymath}
  \label{as:unique}
\end{assumption}
Hereafter, we assume the uniqueness condition for \eqref{eq:lsproblem}
always holds. For any $g \in U_h$, we restrict the definition domain
of the polynomial $\mathcal R_K^m g$ on element $K$ to define a global
reconstruction operator which is denoted by $\mathcal R^m$:
\begin{displaymath}
  \mathcal R^mg|_K = (\mathcal R_K^mg)|_K,\quad \forall K\in \mathcal
  T_h.
\end{displaymath} 
Then we extend the reconstruction operator to an operator defined on
$C^0(\Omega)$, still denoted as $\mathcal R^m$:
\begin{displaymath}
  \mathcal R^mu=\mathcal R^m\tilde u,\quad \tilde u\in U_h, \quad
  \tilde u(\bm x_K)=u(\bm x_K), \quad \forall u\in C^0(\Omega).
\end{displaymath}

We note that $\mathcal R^m$ is a linear operator whose image is
actually a piecewise $m$th-order polynomial space which is denoted as
\begin{displaymath}
 V_{h}^{m} = \mathcal R^m U_h.
\end{displaymath} 
{
Further, we give a group of basis functions of the space $V_h^m$. We
define $w_K(\bm x) \in C^0(\Omega)$ such that
\begin{displaymath}
  w_K(\bm x) = \begin{cases} 1, \quad &\bm x = \bm x_K, \\ 0, \quad
    &\bm x \in \widetilde K, \quad \widetilde K \neq K.
  \end{cases}
\end{displaymath}
Then we denote $\left\{ \lambda_K\ |\ \lambda_K = \mc R^m w_K
\right\}$ as a group of basis functions. Given ${\lambda_K}$, we
may write the reconstruction operator in an explicit way:
\begin{equation}
  \mc R^m g = \sum_{K \in \MTh} g(\bm x_K) \lambda_K(x), \quad \forall
  g \in C^0(\Omega).
  \label{eq:explicit}
\end{equation}
From \eqref{eq:explicit}, it is clear that the degrees of freedom of
$\mc R^m$ are the values of the unknown function at the collocation
points of all elements in partition. We present a 2D example in
Section \ref{sec:2dexample} to demonstrate the reconstruction process
and the implementation of basis functions.

We note that} $\mathcal R^m u(\forall u\in C^0(\Omega))$ may be
discontinuous across the inter-element boundaries. The fact inspires
us to share some well-developed theories of DG methods and enjoy its
advantages.

We first introduce the traditional average and jump notations in DG
method. Let $e$ be an interior edge shared by two adjacent elements
$e=\partial K^{+} \cap \partial K^{-}$ with the unit outward normal
vector $\bm{\mathrm n}^{+}$ and $\bm{\mathrm n}^{-}$, respectively.
Let $v$ and $\bm{v}$ be the scalar-valued and vector-valued functions
on $\mathcal T_h$, respectively, we define the $average$ operator $\{
\cdot \}$ as follows:
\begin{displaymath}
\{v\}=\frac{1}{2}(v^{+} + v^{-}), \quad \{ \bm{v} \} =
\frac{1}{2}(\bm{v}^{+} + \bm{v}^{-}) , \quad\text{on }\ e\in\mathcal
E_h^i,
\end{displaymath}
with $v^+=v|_{K^+},\ v^-=v|_{K^-},\ \bm v^+=\bm v|_{K^+},\ \bm v^-=\bm
v|_{K^-}$.

Further, we set the $jump$ operator $\lj \cdot \rj$ as
\begin{displaymath}
  \begin{aligned}
    \lj v \rj =v^{+} \bm{\mathrm n}^{+} + v^{-} \bm{\mathrm n}^{-},
    \quad \lj \bm{v} \rj =\bm{v}^{+}\cdot \bm{\mathrm
    n}^{+}+\bm{v}^{-}\cdot \bm{\mathrm n}^{-}, \\
    \lj \bm{v} \otimes\bm{\mathrm n} \rj =\bm{v}^{+}\otimes
    \bm{\mathrm n}^{+}+\bm{v}^{-}\otimes \bm{\mathrm n}^{-},\quad
    \text{on}\ e\in\mathcal E_h^i.  \\
\end{aligned}
\end{displaymath}

For $e \in \mathcal E^b_h$, we set
\begin{displaymath}
  \begin{aligned}
    \{v\}=v&,\quad \{\bm v\}=\bm v,\quad \lj v \rj=v\bm{\mathrm n},
    \\ \lj \bm v \rj=\bm v\cdot\bm{\mathrm n}&, \quad \lj
    \bm{v}\otimes \bm{\mathrm n} \rj = \bm{v}\otimes \bm{\mathrm n}
    ,\quad\text{on}\ e \in \mathcal E^b_h.\\
  \end{aligned}
\end{displaymath}

Now we will present the error analysis of $\mathcal R^m$. We begin by
defining broken Sobolev spaces of composite order $\bm{\mathrm s}=\{
s_K\geq0: \forall K\in\mathcal T_h\}$:
\begin{displaymath}
  \begin{aligned}
    H^{\bm{\mathrm s}}(\Omega,\mathcal T_h)\triangleq\{u\in
    L^2(\Omega)&: u|_K \in H^{s_K}(K),\forall K\in\mathcal T_h\}, \\
  \end{aligned}
\end{displaymath}
where $H^{s_K}(K)$ is the standard Sobolev spaces on element $K$.  The
associated broken norm is defined as
\begin{displaymath}
  \begin{aligned}
     \|u\|_{H^{\bm{\mathrm s}}(\Omega,\mathcal T_h)}^2=
     \sum_{K\in\mathcal T_h}\|u\|_{H^{s_K}(K)}^2,
   \end{aligned}
\end{displaymath}
where $\|\cdot\|_{H^{s_K}(K)}$ is the standard Sobolev norm on element
$K$. For $\bm u\in [H^{\bm{\mathrm s}}(\Omega, \mathcal T_h)]^d$, the
norm is defined as
\begin{displaymath}
  \|\bm u\|_{H^{\bm{\mathrm s}}(\Omega,\mathcal T_h)}^2= \sum_{i=1}^d
  \|\bm u_i\|_{H^{\bm{\mathrm s}}(\Omega, \mathcal T_h)}^2.
\end{displaymath}
When $s_K=s$ for all elements in $\mathcal T_h$, we simply write
$H^s(\Omega,\mathcal T_h)$ and $[H^s(\Omega, \mathcal T_h)]^d$.

% Then following \cite{li2012efficient, li2016discontinuous}, we define

Then we define a constant $\Lambda(m,\mathcal{I}_K)$ for $K\in
\mathcal T_h$:
\begin{equation}
  \label{eq:constant}
  \Lambda(m, \mathcal{I}_K) \triangleq \max_{p\in \mathbb{P}_m(S(K))}
  \frac{\max_{\bm{x} \in S(K)} |p(\bm{x})|}{\max_{\bm{x} \in
      \mathcal{I}_K} |p(\bm{x})|},
\end{equation}
the Assumption \ref{as:unique} is equivalent to
\begin{displaymath}
\Lambda(m, \mathcal{I}_K) < \infty.
\end{displaymath}
The uniform upper bound of $\Lambda(m, \mathcal I_K)$ exists if
element patches are convex and the triangulation is quasi-uniform
\cite{li2012efficient}. We also refer to \cite{li2016discontinuous}
for the estimate of $\Lambda(m, \mathcal I_K)$ in more general cases
such as polygonal partition and non-convex element patch. We denote by
$\Lambda_m$ the uniform upper bound of $\Lambda(m, \mathcal I_K)$.

% If some conditions about $S(K)$ and $\# S(K)$ hold, $\Lambda(m,
% \mathcal I_K)$ has a uniform upper bound which is denoted by
% $\Lambda_m$.

 With $\Lambda_m$, we have the following estimates.
\begin{lemma}
  Let $g\in H^{m+1}(\Omega)(m\geq0)$ and $ K\in\mathcal T_h$, then
  \begin{equation}
    \|g-\mathcal{R}^m g\|_{L^2(K)}\lesssim \Lambda_{m}
    h^{m+1}\|g\|_{H^{m+1}(S(K))}.
    \label{eq:reconL2error}
  \end{equation}
  \label{le:reconL2error}
\end{lemma}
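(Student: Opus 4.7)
The plan is to compare $\mc R^m g$ with a standard polynomial best approximation on the patch $S(K)$. I would first fix $p^\star \in \mb P_m(S(K))$ to be a Bramble--Hilbert approximant of $g$. Since assumptions {\bf H1}--{\bf H2} guarantee that $S(K)$ is a union of at most $N\cdot \#S(K)$ shape-regular triangles of diameter $\lesssim h$, a rescaling argument on a reference patch of unit diameter yields
\[
\|g-p^\star\|_{L^2(S(K))}\lesssim h^{m+1}\|g\|_{H^{m+1}(S(K))},\qquad \|g-p^\star\|_{L^\infty(S(K))}\lesssim h^{m+1-d/2}\|g\|_{H^{m+1}(S(K))},
\]
the $L^\infty$ bound coming from a Sobolev embedding on the reference patch (tacitly assuming $H^{m+1}\hookrightarrow C^0$ so that the pointwise values entering $\mc R^m g$ are defined).

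Next I would split the local error as $g-\mc R^m g = (g-p^\star) + (p^\star - \mc R_K^m g)$ on $K$. The first piece is already of the correct order in $L^2(K)$ by the Bramble--Hilbert estimate above and carries no factor of $\Lambda_m$. For the second piece, the equivalence constant $\Lam_m$ from \eqref{eq:constant} lets one pass from collocation-point values to the $L^\infty$ norm on $S(K)$,
\[
\|p^\star-\mc R_K^m g\|_{L^\infty(S(K))}\le \Lam_m\,\max_{\bm x\in\mc I_K}|p^\star(\bm x)-\mc R_K^m g(\bm x)|.
\]
Using the least-squares optimality of $\mc R_K^m g$ in \eqref{eq:lsproblem} and the uniform bound $\#\mc I_K \le \#S(K)$ supplied by the partition assumptions,
\[
\max_{\bm x\in\mc I_K}|g-\mc R_K^m g|\le \Bigl(\sum_{\bm x\in\mc I_K}|g-p^\star|^2\Bigr)^{1/2}\lesssim \|g-p^\star\|_{L^\infty(S(K))},
\]
so that a triangle inequality gives $\max_{\mc I_K}|p^\star-\mc R_K^m g|\lesssim \|g-p^\star\|_{L^\infty(S(K))}$. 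Multiplying by $|K|^{1/2}\sim h^{d/2}$ to convert $L^\infty(K)\subset L^\infty(S(K))$ back to $L^2(K)$ and plugging in the $L^\infty$ approximation bound yields $\|p^\star-\mc R_K^m g\|_{L^2(K)}\lesssim \Lam_m h^{m+1}\|g\|_{H^{m+1}(S(K))}$. Combining this with the first piece gives \eqref{eq:reconL2error}.

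The main obstacle is the uniform polynomial approximation step on $S(K)$: the patch is not a single shape-regular element but a finite union of them, so one must lean on {\bf H1}--{\bf H2} to show that after rescaling by $h$ the Bramble--Hilbert and Sobolev-embedding constants depend only on $N$ and $\sigma$. Once this is in place the remainder is routine bookkeeping, and the constant $\Lam_m$ appears precisely because the least-squares step forces one to pass from finitely many point-values in $\mc I_K$ to a norm on the full patch.
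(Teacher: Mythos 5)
The paper does not actually prove Lemma \ref{le:reconL2error}; it defers to \cite{li2012efficient, li2016discontinuous}, and your argument --- a Bramble--Hilbert approximant $p^\star$ on the patch, least-squares quasi-optimality to control the collocation-point residual, the constant $\Lambda_m$ to pass from values on $\mathcal I_K$ to the sup norm on $S(K)$, and a final $|K|^{1/2}$ rescaling --- is precisely the standard proof given in those references and is correct. The only points to make explicit are the ones you already flag: the uniformity of the Bramble--Hilbert and Sobolev-embedding constants on the rescaled patch under {\bf H1}--{\bf H2}, and the requirement $m+1>d/2$ for the $L^\infty$ step (which is implicit throughout, since $\mathcal R^m g$ is defined through point values of $g$).
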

For convenience, the symbol $\lesssim$ and $\gtrsim$ will be used in
this paper. That $X_1\lesssim Y_1$ and $X_2\gtrsim Y_2$ mean that
$X_1\leq C_1Y_1$ and $X_2\geq C_2Y_2$ for some positive constants
$C_1$ and $C_2$ which are independent of mesh size $h$.
\begin{lemma}
  Let $g\in H^{m+1}(\Omega)(m\geq0)$ and $ K\in\mathcal T_h$, then
  \begin{equation}
    \|g-\mathcal R^m g\|_{L^2(\partial K)}\lesssim \Lambda_{m}
    h^{m+\frac12}\|g\|_{H^{m+1}(S(K))}.
    \label{eq:recontraceinequality}
  \end{equation}
  \label{th:recontraceinequality}
\end{lemma}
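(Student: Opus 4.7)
The plan is to derive the trace estimate from the interior $L^2$ estimate of Lemma \ref{le:reconL2error} by invoking the Agmon inequality \textbf{M1} applied to $v = g - \mathcal R^m g \in H^1(K)$. This yields
\begin{displaymath}
\|g - \mathcal R^m g\|_{L^2(\partial K)}^2 \lesssim h_K^{-1}\|g - \mathcal R^m g\|_{L^2(K)}^2 + h_K\|\nabla(g - \mathcal R^m g)\|_{L^2(K)}^2.
\end{displaymath}
The first term is immediately controlled by Lemma \ref{le:reconL2error} and contributes $\Lambda_m^2 h^{2m+1}\|g\|_{H^{m+1}(S(K))}^2$, which exactly matches the desired bound. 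The task thus reduces to establishing the companion $H^1$ estimate
\begin{displaymath}
\|\nabla(g - \mathcal R^m g)\|_{L^2(K)} \lesssim \Lambda_m h^{m}\|g\|_{H^{m+1}(S(K))},
\end{displaymath}
so that the gradient term contributes $h_K \cdot \Lambda_m^2 h^{2m}$, again of order $\Lambda_m^2 h^{2m+1}$.

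To prove this auxiliary $H^1$ estimate, I would pick any $p \in \mathbb P_m(K)$ realizing the standard Bramble--Hilbert polynomial approximation on $K$ (for instance an averaged Taylor polynomial), so that $\|g-p\|_{L^2(K)} \lesssim h^{m+1}\|g\|_{H^{m+1}(K)}$ and $\|\nabla(g-p)\|_{L^2(K)} \lesssim h^{m}\|g\|_{H^{m+1}(K)}$. I then split via the triangle inequality
\begin{displaymath}
\|\nabla(g - \mathcal R^m g)\|_{L^2(K)} \leq \|\nabla(g - p)\|_{L^2(K)} + \|\nabla(p - \mathcal R^m g)\|_{L^2(K)}.
\end{displaymath}
Because $p - \mathcal R^m g \in \mathbb P_m(K)$, the inverse inequality \textbf{M2} applies to the second term and gives $\|\nabla(p - \mathcal R^m g)\|_{L^2(K)} \lesssim h^{-1}\|p - \mathcal R^m g\|_{L^2(K)}$. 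Bounding the $L^2$ norm on the right once more by the triangle inequality, $\|p-g\|_{L^2(K)} + \|g - \mathcal R^m g\|_{L^2(K)}$, and applying Lemma \ref{le:reconL2error} to the latter summand delivers the required $h^m$ rate with the $\Lambda_m$ factor.

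Combining the two contributions inside the Agmon bound and taking square roots gives the claim. I expect the only real subtlety to be the careful bookkeeping of the constant $\Lambda_m$: it enters only through Lemma \ref{le:reconL2error}, while the polynomial approximation and the inverse inequality introduce only generic constants depending on the shape-regularity parameters $N,\sigma$ and the fixed polynomial degree $m$ (so the factor $m^2$ from \textbf{M2} is absorbed into the hidden constant). No genuinely new ingredient beyond Lemma \ref{le:reconL2error}, \textbf{M1}, \textbf{M2}, and standard polynomial approximation on the single element $K$ is needed; in particular the element patch $S(K)$ enters only through the norm $\|g\|_{H^{m+1}(S(K))}$ inherited from Lemma \ref{le:reconL2error}.
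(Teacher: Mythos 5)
Your proof is correct. Note that the paper does not actually prove this lemma in the text: Lemmas \ref{le:reconL2error}--\ref{th:reconSobleverror} are all stated with their proofs deferred to \cite{li2012efficient, li2016discontinuous}, so there is no in-paper argument to compare against. Your route --- Agmon's inequality \textbf{M1} applied to $g-\mathcal R^m g$, the first term controlled by Lemma \ref{le:reconL2error}, and the gradient term controlled by an $H^1$ estimate obtained from Bramble--Hilbert plus the inverse inequality \textbf{M2} plus Lemma \ref{le:reconL2error} --- is the standard derivation and is sound. Two small observations. First, the auxiliary $H^1$ bound you establish is precisely Lemma \ref{th:reconSobleverror} of the paper, so your argument simultaneously supplies a proof of that statement from Lemma \ref{le:reconL2error}; there is no circularity since you do not invoke the trace estimate in deriving it. Second, your triangle-inequality step produces a factor $(1+\Lambda_m)$ rather than $\Lambda_m$; absorbing it requires $\Lambda_m\gtrsim 1$, which does hold because $\mathcal I_K\subset S(K)$ forces every ratio in the definition \eqref{eq:constant} to be at least $1$, but it is worth saying explicitly. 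With that remark added, the bookkeeping of constants (shape-regularity parameters $N,\sigma$ and the fixed degree $m$ absorbed into the hidden constant, $\Lambda_m$ tracked explicitly) is exactly right.
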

For the standard Sobolev norm, we have the following estimates:
\begin{lemma}
  Let $g\in H^{m+1}(\Omega)(m\geq0)$ and $K\in\mathcal T_h$, then
  \begin{equation}
    \|g-\mathcal R^mg\|_{H^1(K)}\lesssim \Lambda_{m}
    h^{m}\|g\|_{H^{m+1}(S(K))}.
    \label{eq:reconSobleverror}
  \end{equation}
  \label{th:reconSobleverror}
\end{lemma}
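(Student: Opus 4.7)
The plan is to combine the already-established $L^2$ error bound (Lemma~\ref{le:reconL2error}) with the inverse inequality \textbf{M2} via a standard polynomial intermediary argument. Since $\mathcal{R}^m g$ restricted to $K$ lies in $\mathbb{P}_m(K)$, the quantity $\mathcal{R}^m g - p$ for any $p \in \mathbb{P}_m(K)$ is a polynomial on $K$, which is exactly the setting in which \textbf{M2} converts an $L^2$ estimate into an $H^1$ estimate at the cost of one power of $h_K^{-1}$.

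First I would fix, on each element $K \in \MTh$, a polynomial $p \in \mathbb{P}_m(K)$ (e.g.\ an averaged Taylor polynomial or a Bramble--Hilbert/Dupont--Scott representative of $g$) that enjoys the classical simultaneous approximation bounds
\begin{displaymath}
  \|g - p\|_{L^2(K)} \lesssim h_K^{m+1}\|g\|_{H^{m+1}(K)}, \qquad
  \|g - p\|_{H^1(K)} \lesssim h_K^{m}\|g\|_{H^{m+1}(K)}.
\end{displaymath}
Then I would split
\begin{displaymath}
  g - \mathcal{R}^m g = (g - p) + (p - \mathcal{R}^m g)
\end{displaymath}
and bound the two pieces separately in the $H^1(K)$ norm. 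The first piece is handled directly by the second estimate above.

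For the second piece, since $p - \mathcal{R}^m g \in \mathbb{P}_m(K)$, the inverse inequality \textbf{M2} gives
\begin{displaymath}
  \|p - \mathcal{R}^m g\|_{H^1(K)} \lesssim h_K^{-1}\|p - \mathcal{R}^m g\|_{L^2(K)}.
\end{displaymath}
A further triangle inequality $\|p - \mathcal{R}^m g\|_{L^2(K)} \le \|p - g\|_{L^2(K)} + \|g - \mathcal{R}^m g\|_{L^2(K)}$, followed by the $L^2$ approximation bound on $p$ and Lemma~\ref{le:reconL2error}, yields
\begin{displaymath}
  \|p - \mathcal{R}^m g\|_{L^2(K)} \lesssim h_K^{m+1}\|g\|_{H^{m+1}(K)} + \Lambda_m h^{m+1}\|g\|_{H^{m+1}(S(K))} \lesssim \Lambda_m h^{m+1}\|g\|_{H^{m+1}(S(K))},
\end{displaymath}
absorbing the first term into the second since $K \subset S(K)$ and $\Lambda_m \ge 1$. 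Combining with $h_K \le h$ and adding the two pieces produces the claimed bound $\Lambda_m h^m \|g\|_{H^{m+1}(S(K))}$.

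The main obstacle, if any, is essentially bookkeeping: one must verify that the restriction $(\mathcal{R}^m g)|_K$ really belongs to $\mathbb{P}_m(K)$ (it does, by the definition $\mathcal{R}^m g|_K = (\mathcal{R}^m_K g)|_K$ with $\mathcal{R}^m_K g \in \mathbb{P}_m(S(K))$), and that the inverse inequality constant as well as the Bramble--Hilbert constant are independent of $h$ under the shape regularity hypotheses \textbf{H1}--\textbf{H2}. No new $\Lambda_m$ factor is introduced by the $H^1$ step, because \textbf{M2} only sees the polynomial on $K$ itself; $\Lambda_m$ enters solely through the reuse of Lemma~\ref{le:reconL2error}, which is why the final estimate retains the same $\Lambda_m$ dependence as the $L^2$ and trace versions.
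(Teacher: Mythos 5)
The paper does not actually prove Lemma \ref{th:reconSobleverror}: it states Lemmas \ref{le:reconL2error}--\ref{th:reconSobleverror} without argument and defers all proofs to \cite{li2012efficient, li2016discontinuous}, so there is no in-text proof to compare against. Your route --- insert a Bramble--Hilbert polynomial $p\in\mathbb{P}_m(K)$, bound $g-p$ in $H^1(K)$ directly, and convert $L^2$ control of the polynomial difference $p-(\mathcal{R}^m g)|_K$ into $H^1$ control via the inverse inequality \textbf{M2} --- is the standard way to derive \eqref{eq:reconSobleverror} from \eqref{eq:reconL2error}, and you correctly identify that $\Lambda_m$ enters only through the reuse of Lemma \ref{le:reconL2error}.

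One step needs tightening before the argument closes. Lemma \ref{le:reconL2error} as stated carries the global mesh size $h=\max_K h_K$, while \textbf{M2} produces a factor $h_K^{-1}$ on the individual element; the resulting product $h^{m+1}h_K^{-1}$ is bounded by $h^m$ only if $h\lesssim h_K$, i.e.\ under quasi-uniformity, which is not guaranteed by \textbf{H1}--\textbf{H2} alone (one only has $h_K\le h$, which gives the inequality in the wrong direction). The clean fix is to use the local form of the $L^2$ estimate, $\|g-\mathcal{R}^m g\|_{L^2(K)}\lesssim\Lambda_m h_{S(K)}^{m+1}\|g\|_{H^{m+1}(S(K))}$, which is what \cite{li2012efficient, li2016discontinuous} actually establish, together with the observation that $h_{S(K)}\lesssim h_K$ when the patch has bounded cardinality and consists of shape-regular, edge-connected elements. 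With that substitution your chain yields the stronger local bound $\Lambda_m h_K^{m}\|g\|_{H^{m+1}(S(K))}\le\Lambda_m h^{m}\|g\|_{H^{m+1}(S(K))}$. Everything else --- the membership $(\mathcal{R}^m g)|_K\in\mathbb{P}_m(K)$, the simultaneous approximation bounds for $p$, and the absorption of the $h_K^{m+1}\|g\|_{H^{m+1}(K)}$ term --- is correct.
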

We refer to \cite{li2012efficient, li2016discontinuous} for detailed
proofs and more discuss about $S(K)$ and $\# S(K)$. Here we note that
one of the conditions of guaranteeing the uniform upper bound
$\Lambda_m$ is $\# S(K)$ should be much larger than
$\text{dim}(\mathbb P_m)$. In Section \ref{sec:infsuptest} we will
list the values of $\# S(K)$ used in all numerical experiments.

Finally, we derive the estimate in DG energy norm. For the
scalar-valued function, the DG energy norm is defined as:
\begin{displaymath}
  \begin{aligned}
  \|u\|_{\mathrm{DG}}^2&\triangleq\sum_{K\in\mathcal
    T_h}|u|_{H^1(K)}^2 + \sum_{e\in\mathcal E_h} \frac1{h_e}\|\lj u
  \rj\|_{L^2(e)}^2,\quad \forall u\in H^1(\Omega, \mathcal T_h),\\
\end{aligned}
\end{displaymath}
\begin{theorem}
  Let $g\in H^{m+1}(\Omega)(m\geq0)$, then
  \begin{equation}
    \|g-\mathcal R^mg\|_{\mathrm{DG}}\lesssim \Lambda_{m} h^{m}
    \|g\|_{H^{m+1}(\Omega)}.
    \label{eq:interpolation}
  \end{equation}
  \label{th:interpolationerrorDG}
\end{theorem}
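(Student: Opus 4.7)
The plan is to bound the two pieces of the DG norm separately and then combine, reusing Lemma \ref{th:reconSobleverror} for the broken $H^1$ part and Lemma \ref{th:recontraceinequality} for the jump part.

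First I would handle the broken semi-norm term. Applying Lemma \ref{th:reconSobleverror} elementwise gives
\begin{displaymath}
  \sum_{K\in\MTh} |g-\mc R^m g|_{H^1(K)}^2 \lesssim \Lambda_m^2 h^{2m} \sum_{K\in\MTh} \|g\|_{H^{m+1}(S(K))}^2.
\end{displaymath}
Next I would handle the jump term. Since $g\in H^{m+1}(\Omega)$ is single-valued on interior edges, $\lj g\rj = 0$ on $e\in\mc E_h^i$, so on any edge (interior or boundary) the jump of $g-\mc R^m g$ can be controlled by the trace of $g-\mc R^m g$ from the adjacent element(s). Using $\|\lj g-\mc R^m g\rj\|_{L^2(e)}^2 \lesssim \|(g-\mc R^m g)^+\|_{L^2(e)}^2 + \|(g-\mc R^m g)^-\|_{L^2(e)}^2$ and summing edge contributions elementwise, together with $h_e \approx h_K$ from the shape-regularity assumptions H1--H2, one obtains
\begin{displaymath}
  \sum_{e\in\mc E_h} \frac{1}{h_e} \|\lj g-\mc R^m g\rj\|_{L^2(e)}^2 \lesssim \sum_{K\in\MTh} \frac{1}{h_K} \|g-\mc R^m g\|_{L^2(\pa K)}^2.
\end{displaymath}
Then Lemma \ref{th:recontraceinequality} gives each such term $\lesssim \Lambda_m^2 h^{2m} \|g\|_{H^{m+1}(S(K))}^2$.

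The final step is the finite-overlap argument: the recursive patch construction and the assumption H1 imply that each element $\widetilde K\in\MTh$ belongs to at most a bounded number (depending only on $N$, $\sigma$ and $\#S(K)$) of patches $S(K)$. Consequently
\begin{displaymath}
  \sum_{K\in\MTh} \|g\|_{H^{m+1}(S(K))}^2 \lesssim \|g\|_{H^{m+1}(\Omega)}^2,
\end{displaymath}
and combining with the two previous bounds yields the claimed estimate \eqref{eq:interpolation}.

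The proof is essentially routine given the lemmas; the only subtlety I anticipate is the finite-overlap step, since it is the one ingredient not stated as a prior lemma and it is the only place the geometric control of the patch construction enters the error analysis. I would state it explicitly as the reason the local $H^{m+1}(S(K))$ norms can be summed into a single $H^{m+1}(\Omega)$ norm without picking up an $h$-dependent constant.
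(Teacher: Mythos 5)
Your proof is correct and follows essentially the same route as the paper: the broken $H^1$ part via Lemma \ref{th:reconSobleverror}, the jump part by reducing edge jumps to element traces and invoking Lemma \ref{th:recontraceinequality}, and a final summation over elements. You are in fact slightly more careful than the paper in making the finite-overlap of the patches $S(K)$ explicit as the reason the local $H^{m+1}(S(K))$ norms sum to the global norm, a point the paper's proof leaves implicit.
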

\begin{proof}
  From Lemma \ref{th:reconSobleverror}, we have
  \begin{displaymath}
    \begin{aligned}
      \sum_{K\in\mathcal T_h}|g-\mathcal R^mg|_{H^1(K)}&\lesssim
      \sum_{K\in\mathcal T_h}\Lambda_m
      h^{m}\|g\|_{H^{m+1}(S(K))}\\ &\lesssim \Lambda_m
      h^{m}\|g\|_{H^{m+1}(\Omega)}.\\
    \end{aligned}
  \end{displaymath}
  For any $e\in\mathcal E_h^i$ shared by elements $K_1$ and $K_2$, we
  have
  \begin{displaymath}
    \frac1{h_e}\|\lj g-\mathcal R^m g\rj \|_{L^2(e)}^2\leq
    \frac{1}{h_e}\left( \|g-\mathcal R^m g\|_{L^2(\partial K_1)}^2
    +\|g-\mathcal R^m g \|_{L^2(\partial K_2)}^2\right).
  \end{displaymath}
  From Lemma \ref{th:recontraceinequality}, we get
  \begin{displaymath}
    \begin{aligned}
    \frac1{h_e}\|g-\mathcal R^m g\|_{L^2(\partial
      K_1)}^2&\lesssim\Lambda_m
    h^{2m}\|g\|_{H^{m+1}(K_1)}^2,\\ \frac1{h_e}\|g-\mathcal R^m
    g\|_{L^2(\partial K_2)}^2&\lesssim\Lambda_m
    h^{2m}\|g\|_{H^{m+1}(K_2)}^2.\\
  \end{aligned}
  \end{displaymath}
  For any $e\in\mathcal E_h^b$, assume $e$ is a face of element $K$,
  we have
  \begin{displaymath}
    \begin{aligned}
     \frac1{h_e}\|\lj g-\mathcal R^m g\rj \|_{L^2(e)}^2&\leq
     \frac{1}{h_e}\|g-\mathcal R^m g\|_{L^2(\partial K)}^2\\ &\lesssim
     \Lambda_m h^{2m}\|g\|_{H^{m+1}(K)}^2.\\
   \end{aligned}
  \end{displaymath}
  Combining the above inequalities gives the estimate
  \eqref{eq:interpolation}, which completes the proof.
\end{proof}

For the vector-valued function, the DG energy norm is defined as:
\begin{displaymath}
  \|\bm u\|_{\mathrm{DG}}^2\triangleq\sum_{i=1}^d\|\bm u_i\|_{
    \mathrm{DG}}^2,\quad\forall \bm u\in [H^1(\Omega,\mathcal T_h)]^d,
\end{displaymath}
and the reconstruction operator is defined component-wisely for
$[U_h]^d$, still denoted by $\mathcal R^m$:
\begin{displaymath}
  \begin{aligned}
    {\mathcal R}^m\bm v&=[\mathcal R^m\bm v_i]^d, \quad 1\leq i\leq
    d,\quad \forall \bm v \in [U_h]^d.\\
  \end{aligned}
\end{displaymath}
%The corresponding space is denoted as
%\begin{displaymath}
 %\bm{V}_{h}^{m} = \mathcal R^m [U_h]^d.
%\end{displaymath}

Then the operator can be extended on $[C^0(\Omega)]^d$ and the
corresponding estimate is written as:
\begin{theorem}
  let $\bm g\in [H^{m+1}(\Omega)]^d(m\geq0)$, then
  \begin{displaymath}
    \|\bm g-{\mathcal R}^m\bm g\|_{\mathrm{DG}}\lesssim \Lambda_m
    h^{m}\|\bm g\|_{H^{m+1}(\Omega)}.
  \end{displaymath}
\end{theorem}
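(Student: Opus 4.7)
The plan is to reduce the vector-valued statement to the scalar statement of Theorem \ref{th:interpolationerrorDG} by applying it component-wise, and then reassemble via the definition of the vector DG norm. Since both the reconstruction operator $\mathcal R^m$ on $[C^0(\Omega)]^d$ and the DG energy norm on $[H^1(\Omega,\mathcal T_h)]^d$ have been defined purely component-wise, the vector estimate should follow at once from the scalar one with no new geometric or functional-analytic input.

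First I would unfold the definition of the vector DG norm:
\begin{displaymath}
  \|\bm g - \mathcal R^m \bm g\|_{\mathrm{DG}}^2
  = \sum_{i=1}^{d} \|\bm g_i - \mathcal R^m \bm g_i\|_{\mathrm{DG}}^2,
\end{displaymath}
using that $(\mathcal R^m \bm g)_i = \mathcal R^m \bm g_i$ by the componentwise extension. Since $\bm g \in [H^{m+1}(\Omega)]^d$ means $\bm g_i \in H^{m+1}(\Omega)$ for each $i$, Theorem \ref{th:interpolationerrorDG} applies to every component and yields
\begin{displaymath}
  \|\bm g_i - \mathcal R^m \bm g_i\|_{\mathrm{DG}}
  \lesssim \Lambda_m h^{m} \|\bm g_i\|_{H^{m+1}(\Omega)}, \qquad i=1,\dots,d.
\end{displaymath}

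Then I would square, sum over $i$, and use the vector Sobolev norm convention stated earlier in the paper, $\|\bm g\|_{H^{m+1}(\Omega)}^2 = \sum_{i=1}^{d} \|\bm g_i\|_{H^{m+1}(\Omega)}^2$, to obtain
\begin{displaymath}
  \|\bm g - \mathcal R^m \bm g\|_{\mathrm{DG}}^2
  \lesssim \Lambda_m^{2} h^{2m} \sum_{i=1}^{d} \|\bm g_i\|_{H^{m+1}(\Omega)}^2
  = \Lambda_m^{2} h^{2m} \|\bm g\|_{H^{m+1}(\Omega)}^2,
\end{displaymath}
and taking square roots would conclude the estimate. Because the constant hidden in $\lesssim$ in each componentwise application is independent of $i$ (it depends only on the mesh regularity constants $N$ and $\sigma$), the final constant in the vector estimate is still $h$-independent. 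Since every nontrivial ingredient, namely the elemental and trace estimates of Lemmas \ref{le:reconL2error}--\ref{th:reconSobleverror} and their assembly in Theorem \ref{th:interpolationerrorDG}, is already available, I do not anticipate any real obstacle; the only thing to be careful about is bookkeeping, specifically that the definitions of $\mathcal R^m$ on $[U_h]^d$ and on $[C^0(\Omega)]^d$, and of $\|\cdot\|_{\mathrm{DG}}$ and $\|\cdot\|_{H^{m+1}(\Omega)}$ on vector fields, are all used consistently component by component so that the reduction is clean.
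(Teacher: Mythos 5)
Your proposal is correct and is exactly the argument the paper intends: the paper's proof consists of the single remark that the result ``is a direct extension from Theorem \ref{th:interpolationerrorDG},'' i.e.\ the componentwise reduction you spell out. Your version merely makes the bookkeeping explicit, which is fine.
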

\begin{proof}
  It is a direct extension from Theorem \ref{th:interpolationerrorDG}.
\end{proof}

\section{The weak form of the stokes problem}
\label{sec:weakform}
In this section, we consider the incompressible Stokes problem with
Dirichlet boundary condition, which seeks the velocity field $\bm u$
and its associated pressure $p$ satisfying
\begin{equation}
  \begin{aligned}
    -\Delta\bm u+\nabla p&=\bm f \qquad \text{in}
    \ \Omega,\\ \nabla\cdot\bm u&=0 \qquad\text{in}\ \Omega,\\ \bm
    u&=\bm g \qquad\text{on}\ \partial\Omega,\\
  \end{aligned}
  \label{eq:stokes}
\end{equation}
where $\bm f$ is the given source term and $\bm g$ is a Dirichlet
boundary condition that satisfies the compatibility condition
\begin{displaymath}
  \int_{\partial \Omega} \bm g\cdot\bm{\mathrm n}\mathrm ds=0.
\end{displaymath}

For positive integer $k,k'$, we define the following finite element
spaces to approximate velocity and pressure:
\begin{displaymath}
  \begin{aligned}
    \bm V_h^k=[V_h^k]^d,\quad Q_{h}^{k'}=V_{h}^{k'}.\\
  \end{aligned}
\end{displaymath}

We note that finite element spaces $\bm V_{h}^{k}$ and $Q_{h}^{k'}$
are the subspace of the common discontinuous Galerkin finite element
spaces, which implies that the interior penalty discontinuous Galerkin
method \cite{hansbo2002discontinuous,montlaur2008discontinuous} can be
directly applied to the Stokes problem \eqref{eq:stokes}.

For a vector $\bm u$, we define the second-order tensor $\nabla\bm u$
by
\begin{displaymath}
  (\nabla\bm u)_{i,j}=\frac{\partial\bm u_i}{\partial x_j}, \quad
  1\leq i,j\leq d.
\end{displaymath}

The discrete problem for the Stokes problem \eqref{eq:stokes} is as:
find $(\bm u_h, p_h)\in \bm V_{h}^{k} \times Q_{h}^{k'}$ such that
\begin{equation}
  \begin{aligned}
    a(\bm u_h, \bm v_h)+b(\bm v_h, p_h)&=l(\bm v_h),\quad \forall \bm
    v_h\in \bm V_{h}^{k},\\ b(\bm u_h, q_h)&=(q_h,\bm{\mathrm
      n}\cdot\bm g)_{\partial \Omega},\quad \forall q_h\in
    Q_{h}^{k'},\\
  \end{aligned}
  \label{eq:weakform}
\end{equation}
where symmetric bilinear form $a(\cdot, \cdot)$ is given by
\begin{equation}
  \begin{aligned}
    a(\bm{u}, \bm{v})&=\int_{\Omega}\nabla \bm{u} : \nabla \bm{v}
    \dx\\ &-\int_{\mathcal E_h} (\{\nabla \bm{u}\} : \lj \bm{v} \otimes
    \bm{n} \rj+\lj \bm{u} \otimes \bm{n} \rj : \{\nabla
    \bm{v}\})\mathrm ds \\ &+\int_{\mathcal E_h}\eta\lj \bm{u} \otimes
    \bm{n} \rj : \lj \bm{v} \otimes \bm{n} \rj \mathrm ds, \quad
    \forall \bm{u},\bm{v}\in [H^1(\Omega, \mathcal T_h)]^d.\\
  \end{aligned}
  \label{eq:ellipticform}
\end{equation}

The term $\eta$ is referred to as the penalty parameter which is
defined on $\mathcal E_h$ by
\begin{displaymath}
  \eta|_e = \eta_e,\quad \forall e\in \mathcal E_h,
\end{displaymath}
and will be specified later. The bilinear form $b(\cdot, \cdot)$ and
the linear form $l(\cdot)$ are defined as
\begin{equation}
  \begin{aligned}
    b(\bm v, p)&=-\int_{\Omega} p\nabla\cdot\bm v\dx + \int_{\mathcal
    E_h} \{p\}\lj \bm{v} \rj\mathrm ds,\\ 
    l(\bm v)&=\int_{\Omega}\bm f
    \cdot \bm v\dx-\int_{\mathcal E_h^b}\bm g \cdot (\nabla
    \bm v\cdot\bm{\mathrm n}) \mathrm ds +
    \int_{\mathcal E_h^b}\eta \bm g\cdot \bm v\mathrm ds,\\
  \end{aligned}
  \label{eq:divergenceform}
\end{equation}
for $\forall \bm v\in [H^1(\Omega,\mathcal T_h)]^d$ and $p\in
L^2(\Omega)$.

Now we present the standard continuity and coercivity properties of
the bilinear form $a(\cdot. \cdot)$. Actually the bilinear form
$a(\cdot, \cdot)$ is a direct extension from the interior penalty
bilinear form used for solving the elliptic problems
\cite{arnold1982interior}. It is easy to extend the theoretical
results of solving the elliptic problems to $a(\cdot, \cdot)$.

\begin{lemma}
  The bilinear form $a(\cdot, \cdot)$, defined in
  \eqref{eq:ellipticform}, is continuous when $\eta\geq0$. The
  following inequality holds:
  \begin{displaymath}
    |a(\bm u, \bm v)|\lesssim\|\bm u\|_{\mathrm{\mathrm{DG}}}\|\bm
    v\|_{\mathrm{\mathrm{DG}}},\quad \forall \bm{u,v}\in [H^1(\Omega, \mathcal
      T_h)]^d.
  \end{displaymath}
  \label{le:ellipticcontinuous}
\end{lemma}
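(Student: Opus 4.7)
The approach is to split $a(\bm u, \bm v)$ into three pieces---the bulk diffusion $T_1 := \int_\Omega \nabla\bm u : \nabla\bm v\dx$, the two symmetric consistency fluxes $T_2$, and the penalty term $T_3 := \int_{\mathcal E_h}\eta\lj \bm u\otimes\bm n\rj:\lj \bm v\otimes\bm n\rj\ds$---and to dominate each $|T_i|$ by $\|\bm u\|_{\mathrm{DG}}\|\bm v\|_{\mathrm{DG}}$.

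For the bulk term, two applications of Cauchy--Schwarz (element-wise in $L^2(K)$, then as a discrete sum over $\mathcal T_h$) immediately yield $|T_1| \le (\sum_{K}|\bm u|_{H^1(K)}^2)^{1/2}(\sum_{K}|\bm v|_{H^1(K)}^2)^{1/2} \le \|\bm u\|_{\mathrm{DG}}\|\bm v\|_{\mathrm{DG}}$. For the penalty term I factor $\eta = \sqrt{\eta}\cdot\sqrt{\eta}$ and use edge-wise Cauchy--Schwarz, which gives $|T_3|$ controlled by $(\sum_e \eta_e \|\lj\bm u\otimes\bm n\rj\|_{L^2(e)}^2)^{1/2}$ times the analogous factor for $\bm v$; once $\eta_e$ is taken of order $h_e^{-1}$ (as is standard for interior penalty and will be specified later in the paper), each factor is dominated by the jump contribution to $\|\cdot\|_{\mathrm{DG}}$.

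The main obstacle is the consistency term $T_2$, because $\|\cdot\|_{\mathrm{DG}}$ does not directly control the edge norms of $\{\nabla\bm u\}$ or $\{\nabla\bm v\}$. My plan is a scaled edge-wise Cauchy--Schwarz,
\[
\Bigl|\int_e \{\nabla\bm u\}:\lj \bm v\otimes\bm n\rj\ds\Bigr|
\le \bigl(h_e^{1/2}\|\{\nabla\bm u\}\|_{L^2(e)}\bigr)\cdot\bigl(h_e^{-1/2}\|\lj \bm v\otimes\bm n\rj\|_{L^2(e)}\bigr),
\]
so that after Cauchy--Schwarz over $e \in \mathcal E_h$ the right factor is absorbed into the jump part of $\|\bm v\|_{\mathrm{DG}}$. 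To control $\sum_{e}h_e\|\{\nabla\bm u\}\|_{L^2(e)}^2$, I bound $\|\{\nabla\bm u\}\|_{L^2(e)}^2$ by the sum of $\|\nabla\bm u\|_{L^2(e)}^2$ on the two adjacent elements and apply the Agmon inequality \textbf{M1} component-wise to $\nabla\bm u$, obtaining $h_K\|\nabla\bm u\|_{L^2(\partial K)}^2 \lesssim |\bm u|_{H^1(K)}^2 + h_K^2|\bm u|_{H^2(K)}^2$. With $h_e \sim h_K$ for edges $e\subset\partial K$ and the finite overlap of elements meeting each edge, summation then produces the desired bound by $\|\bm u\|_{\mathrm{DG}}^2$. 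The $H^2$ contribution is absorbed in the usual DG sense---either it is vacuous when $\bm u$ is a piecewise polynomial (the inverse inequality \textbf{M2} gives $h_K|\bm u|_{H^2(K)} \lesssim |\bm u|_{H^1(K)}$), or it is harmless under the implicit extra regularity required for $\nabla\bm u$ to admit edge traces, which is the standing convention when writing the bilinear form on $[H^1(\Omega,\mathcal T_h)]^d$. Combining $|T_1| + |T_2| + |T_3| \lesssim \|\bm u\|_{\mathrm{DG}}\|\bm v\|_{\mathrm{DG}}$ then yields the claim.
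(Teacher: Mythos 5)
The paper itself does not prove this lemma at all; it simply points to \cite{arnold2002unified,hansbo2002discontinuous,montlaur2008discontinuous}. Your decomposition into bulk, consistency and penalty pieces, with element-wise/edge-wise Cauchy--Schwarz for $T_1$ and $T_3$ and a scaled Cauchy--Schwarz plus the Agmon inequality \textbf{M1} for $T_2$, is exactly the standard argument in those references, so your route is the expected one and most of it is fine (including your correct observation that the penalty bound really needs $\eta_e\lesssim h_e^{-1}$, so ``continuous when $\eta\ge 0$'' hides a constant depending on $\sup_e \eta_e h_e$).

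The one genuine soft spot is your disposal of the second-order term $h_K^2\,|\bm u|_{H^2(K)}^2$ produced by \textbf{M1}. For discrete arguments your appeal to the inverse inequality \textbf{M2} handles it correctly. But for a general piecewise-$H^2$ function that term is \emph{not} dominated by $\|\bm u\|_{\mathrm{DG}}^2$ --- ``harmless under the implicit extra regularity'' is an assertion, not a bound --- and for a bare piecewise-$H^1$ function the edge trace of $\nabla\bm u$ does not even exist, so the form is not defined on all of $[H^1(\Omega,\mathcal T_h)]^d$ in the first place. This defect is inherited from the lemma's own statement; the cited references repair it by asserting continuity either only on the discrete space $\bm V_h^k$, or on $\bm V_h^k + [H^2(\Omega)]^d$ with respect to an augmented norm whose square adds $\sum_{K}h_K\|\nabla \bm v\|_{L^2(\partial K)}^2$ (equivalently $h_K^2|\bm v|_{H^2(K)}^2$) to $\|\bm v\|_{\mathrm{DG}}^2$, these two norms being equivalent on $\bm V_h^k$ by \textbf{M2}. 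State explicitly which version you are proving; with that single repair your argument is complete and coincides with the standard one.
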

\begin{lemma}
  Let
  \begin{displaymath}
    \eta|_e=\frac{\mu}{h_e},\quad \forall e\in \mathcal E_h,
  \end{displaymath}
  where $\mu$ is a positive constant. With sufficiently large $\mu$,
  the following inequality holds:
  \begin{displaymath}
    |a(\bm u_h, \bm u_h)|\gtrsim \|\bm u_h\|_{\mathrm{\mathrm{DG}}}^2, \quad
    \forall \bm u_h\in \bm V_{h}^{k}.
  \end{displaymath}
  \label{le:ellipticcoercivity}
\end{lemma}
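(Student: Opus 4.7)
The plan is to substitute $\bm v = \bm u_h$ into the definition \eqref{eq:ellipticform}, identify the two positive terms as essentially the DG norm squared, and absorb the remaining consistency (cross) term into them via a weighted Young inequality, choosing $\mu$ large enough at the end. Concretely, testing against $\bm u_h$ itself yields
\begin{displaymath}
 a(\bm u_h, \bm u_h) = \sum_{K \in \MTh} \|\nabla \bm u_h\|_{L^2(K)}^2 - 2\int_{\mathcal E_h} \{\nabla \bm u_h\} : \lj \bm u_h \otimes \bm n \rj \, \mathrm ds + \sum_{e \in \mathcal E_h} \frac{\mu}{h_e} \|\lj \bm u_h \otimes \bm n \rj\|_{L^2(e)}^2,
\end{displaymath}
so only the middle term is potentially of wrong sign and needs to be controlled.

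The next step is to bound the cross term edge by edge using Cauchy--Schwarz and Young's inequality with a tunable weight $\epsilon > 0$, producing an $\epsilon h_e \|\{\nabla \bm u_h\}\|_{L^2(e)}^2$ contribution plus an $(\epsilon h_e)^{-1} \|\lj \bm u_h \otimes \bm n\rj\|_{L^2(e)}^2$ contribution. The first contribution is converted back into a volume term via the Agmon inequality \textbf{M1} applied to $\nabla \bm u_h$, followed by the inverse inequality \textbf{M2} to kill the resulting second-derivative term; this is legitimate because $\bm u_h|_K$ lies in $[\mathbb P_k(K)]^d$ (since $V_h^k = \mc R^k U_h$ is by construction a subspace of piecewise polynomials), and it yields
\begin{displaymath}
 h_e \|\{\nabla \bm u_h\}\|_{L^2(e)}^2 \lesssim \sum_{K \supset e} \|\nabla \bm u_h\|_{L^2(K)}^2,
\end{displaymath}
with a constant $C$ depending only on $N$, $\sigma$ and $k$. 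Summing over edges and using shape regularity (so that each element contributes to a bounded number of edges) gives an estimate of the form $C \epsilon \sum_K \|\nabla \bm u_h\|_{L^2(K)}^2$.

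Combining the three pieces leads to
\begin{displaymath}
 a(\bm u_h, \bm u_h) \geq (1 - C\epsilon) \sum_{K \in \MTh} |\bm u_h|_{H^1(K)}^2 + \Big(\mu - \tfrac{1}{\epsilon}\Big) \sum_{e \in \mathcal E_h} \frac{1}{h_e} \|\lj \bm u_h \otimes \bm n \rj\|_{L^2(e)}^2.
\end{displaymath}
One then fixes $\epsilon$ small enough that $1 - C\epsilon \geq 1/2$, and subsequently fixes $\mu$ large enough that $\mu - 1/\epsilon \geq 1/2$. Noting that $\|\lj \bm u_h \otimes \bm n \rj\|_{L^2(e)}^2 = \sum_i \|\lj \bm u_{h,i} \rj\|_{L^2(e)}^2$ reproduces the jump part of $\|\bm u_h\|_{\mathrm{DG}}^2$, this delivers the desired inequality $a(\bm u_h, \bm u_h) \gtrsim \|\bm u_h\|_{\mathrm{DG}}^2$.

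The only subtle point is verifying that \textbf{M1}--\textbf{M2} indeed apply to the reconstructed functions: this is not an obstacle here because $\mc R^k$ produces piecewise polynomials of degree $\leq k$ on each $K$, exactly the setting of \textbf{M2}. Boundary edges $e \in \mathcal E_h^b$ require no separate argument, since the average/jump conventions stated before \eqref{eq:stokes} make the same estimate work with the single adjacent element playing both roles.
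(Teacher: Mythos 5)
Your proposal is correct, and it is the standard coercivity argument for the symmetric interior penalty form: expand $a(\bm u_h,\bm u_h)$, control the consistency term edgewise by Cauchy--Schwarz and Young with a free parameter $\epsilon$, convert $h_e\|\{\nabla\bm u_h\}\|_{L^2(e)}^2$ back to $\sum_{K\supset e}\|\nabla\bm u_h\|_{L^2(K)}^2$ via \textbf{M1} and \textbf{M2} (valid since $\bm u_h|_K$ is a polynomial of degree $\le k$), and then fix $\epsilon$ small and $\mu\gtrsim 1/\epsilon$. Note that the paper itself does not prove this lemma; it simply cites \cite{arnold2002unified,hansbo2002discontinuous,montlaur2008discontinuous}, and your argument is precisely the one carried out in those references, including the two points the citation glosses over: that the trace and inverse inequalities apply to the reconstructed space because $V_h^k=\mc R^k U_h$ is a subspace of piecewise polynomials, and that $\|\lj \bm u_h\otimes\bm n\rj\|_{L^2(e)}^2=\sum_i\|\lj \bm u_{h,i}\rj\|_{L^2(e)}^2$ so the penalty term really is the jump part of $\|\bm u_h\|_{\mathrm{DG}}^2$. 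Your handling of boundary edges via the stated average/jump conventions is also the right observation. No gaps.
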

The detailed proofs of Lemma \ref{le:ellipticcontinuous} and Lemma
\ref{le:ellipticcoercivity} could be found in
\cite{arnold2002unified,hansbo2002discontinuous,
  montlaur2008discontinuous}. We also refer to
\cite{arnold2002unified} where a unified method is employed to analyse
the choices of the penalty parameter $\eta$.

For $b(\cdot,\cdot)$, we have the analogous continuity property.
\begin{lemma}
  The bilinear form $b(\cdot, \cdot)$, defined in
  \eqref{eq:divergenceform}, is continuous. The following inequality
  holds:
  \begin{displaymath}
    |b(\bm v, q)|\lesssim\|\bm v\|_{\mathrm{\mathrm{DG}}} \| q
    \|_{L^2(\Omega)},\quad \forall \bm{v}\in [H^1(\Omega,
      \MTh)]^d,\ \forall q\in L^2(\Omega).
  \end{displaymath}
  \label{le:divergencecontinuous}
\end{lemma}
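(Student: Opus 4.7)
The plan is to split $b(\bm v, q)$ into its volume and interface contributions,
\[
b(\bm v, q) \;=\; \underbrace{-\int_\Omega q\,\nabla\cdot\bm v\,\dx}_{\displaystyle T_1} \;+\; \underbrace{\int_{\mc E_h}\{q\}\,\lj\bm v\rj\,\ds}_{\displaystyle T_2},
\]
and then to bound each term separately by $\|\bm v\|_{\mathrm{DG}}\,\|q\|_{L^2(\Omega)}$.

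For $T_1$ the estimate is immediate: the Cauchy--Schwarz inequality on $\Om$ combined with the pointwise bound $\|\nabla\cdot\bm v\|_{L^2(K)}\le\|\nabla\bm v\|_{L^2(K)}$, summed over $K\in\MTh$, yields $|T_1|\le\|q\|_{L^2(\Om)}\|\nabla\bm v\|_{L^2(\Om)}\le\|q\|_{L^2(\Om)}\|\bm v\|_{\mathrm{DG}}$ directly from the definition of the DG norm.

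For $T_2$ the key manoeuvre is to insert the scaling factor $h_e^{1/2}\cdot h_e^{-1/2}$ before applying Cauchy--Schwarz edge by edge and then over the set of edges, giving
\[
|T_2|\;\le\;\Bigl(\sum_{e\in\mc E_h} h_e\,\|\{q\}\|_{L^2(e)}^2\Bigr)^{1/2}\Bigl(\sum_{e\in\mc E_h} h_e^{-1}\,\|\lj\bm v\rj\|_{L^2(e)}^2\Bigr)^{1/2}.
\]
The second factor is bounded by $\|\bm v\|_{\mathrm{DG}}$ by definition. The first factor is the main obstacle, because a pure $L^2(\Om)$ norm of $q$ does not a priori control traces on $\partial K$. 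I would handle it by working with $q\in Q_h^{k'}$, where the analysis of Section \ref{sec:reconopreator} applies: decomposing each edge contribution between the two adjacent elements I reduce matters to $\sum_{K\in\MTh} h_K\,\|q\|_{L^2(\partial K)}^2$; the Agmon inequality (M1) then gives $h_K\|q\|_{L^2(\partial K)}^2\lesssim\|q\|_{L^2(K)}^2+h_K^2\|\nabla q\|_{L^2(K)}^2$, and the inverse inequality (M2), available since $q|_K$ is a polynomial, absorbs the second term into the first. Summation over $K$ yields $\sum_e h_e\|\{q\}\|_{L^2(e)}^2\lesssim\|q\|_{L^2(\Om)}^2$.

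Combining the bounds on $T_1$ and $T_2$ gives the stated continuity estimate. The only subtle point in the plan is the trace obstruction for $T_2$: the edge integral in the definition of $b(\cdot,\cdot)$ intrinsically requires $q$ to possess element traces, so the estimate is genuinely used in the discrete setting where (M1) and (M2) together turn edge norms of $q$ into bulk $L^2$ norms. Everything else is a standard Cauchy--Schwarz bookkeeping exercise.
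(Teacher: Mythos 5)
Your decomposition and the treatment of both terms follow the standard argument for this continuity bound, and there is in fact no proof in the paper to compare against: the lemma is stated without proof, and the detailed-proof citations attached to Lemmas \ref{le:ellipticcontinuous} and \ref{le:ellipticcoercivity} point to the DG literature, where the estimate for $b(\cdot,\cdot)$ is obtained essentially as you describe. Your bound for $T_1$ is fine, and your handling of $T_2$ --- weighting by $h_e^{\pm 1/2}$, Cauchy--Schwarz edge by edge and then over $\mc E_h$, regrouping the edge contributions by elements, and invoking the Agmon inequality (M1) together with the inverse inequality (M2) --- is correct for $q\in Q_h^{k'}$.

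The one point worth flagging is the scope of your fix. You rightly observe that the estimate cannot hold for arbitrary $q\in L^2(\Omega)$, since $\{q\}$ on $\mc E_h$ is then not even defined; but restricting to $q\in Q_h^{k'}$ does not cover the way the lemma is actually invoked in the proof of Theorem \ref{th:prioriestimate}, where $b$ is applied to $p-q$ with $p$ the exact pressure, which is not piecewise polynomial, so (M2) is unavailable. For such broken-$H^1$ arguments, Agmon alone gives $\sum_{e}h_e\|\{q\}\|_{L^2(e)}^2\lesssim \|q\|_{L^2(\Omega)}^2+\sum_{K}h_K^2\|\nabla q\|_{L^2(K)}^2$, i.e.\ continuity with respect to the augmented norm $\bigl(\|q\|_{L^2(\Omega)}^2+\sum_{K}h_K^2|q|_{H^1(K)}^2\bigr)^{1/2}$ rather than $\|q\|_{L^2(\Omega)}$ alone. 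That extra term is harmless where the lemma is used (for $q$ an approximant of $p$ it contributes at the same order $h^{k'+1}$), but a complete treatment should either state the lemma with this augmented norm or handle the discrete and non-discrete arguments separately. Within the discrete setting you chose, your argument is complete and correct.
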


%Besides the continuity of $a(\cdot, \cdot), b(\cdot,\cdot)$ and the
%coercivity of $a(\cdot, \cdot)$, the existence of a solution to
%\eqref{eq:weakform} satisfying the optimal error estimates depends on
%choosing a pair of approximation spaces such that the inf-sup
%condition holds, see \cite{boffi2013mixed}. For the mixed form
%\eqref{eq:weakform}, the inf-sup condition is written as
%\begin{equation}
  %\mathop{\inf\qquad\sup}_{q_h\in Q_{h}^{k'}\ \bm v_h\in \bm
    %V_{h}^{k}} \frac{b(\bm v_h, q_h)}{ \|\bm
    %v_h\|_{\mathrm{\mathrm{DG}}}\|q_h\|_{L^2(\Omega)}}\geq \beta,
  %\label{eq:infsupcondition}
%\end{equation}
%where $\beta$ is a positive constant.

Besides the continuity of $a(\cdot, \cdot), b(\cdot,\cdot)$ and the
coercivity of $a(\cdot, \cdot)$, 
{ the existence of a
stable finite element approximation solution $(\bm u_h, p_h)$ depends
on choosing a pair of spaces $\bm V_{h}^{k}$ and $Q_h^{k'}$ such that
the following inf-sup condition holds \cite{boffi2013mixed}
}:
\begin{equation}
  \mathop{\inf\qquad\sup}_{q_h\in Q_{h}^{k'}\ \bm v_h\in \bm
    V_{h}^{k}} \frac{b(\bm v_h, q_h)}{ \|\bm
    v_h\|_{\mathrm{\mathrm{DG}}}\|q_h\|_{L^2(\Omega)}}\geq \beta,
  \label{eq:infsupcondition}
\end{equation}
where $\beta$ is a positive constant.

The finite element space we build depends on the collocation points
and element patches, the theoretical verification of the inf-sup
condition for the pair $\bm V_{h}^{k} \times Q_{h}^{k'}$ is very
difficult in all situations. Chapelle and Bathe \cite{chapelle1993inf}
propose a numerical test on whether the inf-sup condition is passed
for a given finite element discretization.  In next section, we will
carry out a series of numerical evaluations for different $k$ and $k'$
to give an indication of the verification of the inf-sup condition.

Then if the inf-sup condition holds, we could state a standard priori
error estimate of the mixed method \eqref{eq:weakform}.
\begin{theorem}
  Let the exact solution $(\bm u, p)$ to the Stokes problem
  \eqref{eq:stokes} belong to $[H^{k+1}(\Omega)]^d
  \times H^{k'+1}(\Omega)$ with $k\geq 1$ and $k'\geq
  0$, and let $(\bm u_h, p_h)$ be the numerical solution to
  \eqref{eq:weakform}, and assume that the inf-sup condition
  \eqref{eq:infsupcondition} holds and the penalty parameter $\eta$ is
  set properly. Then the following estimate holds:
  \begin{equation}
    \|\bm u- \bm u_h\|_{\mathrm{\mathrm{DG}}}+\|p -
    p_h\|_{L^2(\Omega)}\lesssim h^s\left( \|\bm u\|_{H^{k+1}(
    \Omega)}+\|p\|_{H^{k'+1}(\Omega)}\right),
    \label{eq:prioriestimate}
  \end{equation}
  \label{th:prioriestimate}
  where $s=\min(k,k'+1)$.
\end{theorem}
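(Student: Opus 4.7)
The argument follows the standard Brezzi saddle-point framework combined with the DG consistency of the interior penalty formulation. First I would verify Galerkin consistency: since $\bm u \in H^{k+1}(\Omega) \subset H^1(\Omega)^d$ and $p \in H^{k'+1}(\Omega) \subset L^2(\Omega)$, every interior jump $\lj \bm u \otimes \bm n \rj$ and $\lj p \rj$ vanishes and the boundary traces of $\bm u$ reduce to $\bm g$. Multiplying the Stokes equations element-by-element against $\bm v_h \in \bm V_h^k$ and $q_h \in Q_h^{k'}$, integrating by parts, and collecting edge contributions shows that the exact pair $(\bm u, p)$ itself satisfies \eqref{eq:weakform} (the symmetrization term $\lj \bm u \otimes \bm n \rj : \{\nabla \bm v_h\}$ survives only on $\mathcal E_h^b$ and is exactly matched by the $\bm g$-terms inside $l(\cdot)$). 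Subtracting from \eqref{eq:weakform} yields the orthogonality identities
\begin{displaymath}
a(\bm u - \bm u_h, \bm v_h) + b(\bm v_h, p - p_h) = 0, \quad b(\bm u - \bm u_h, q_h) = 0,
\end{displaymath}
for all $(\bm v_h, q_h) \in \bm V_h^k \times Q_h^{k'}$.

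Next I would split the error through the reconstruction operator: set $\bm \eta_u = \bm u - \mathcal R^k \bm u$, $\bm \xi_u = \mathcal R^k \bm u - \bm u_h$, and analogously $\eta_p, \xi_p$ for the pressure. Choosing $\bm v_h = \bm \xi_u$ in the first error equation and applying coercivity (Lemma \ref{le:ellipticcoercivity}) together with continuity (Lemmas \ref{le:ellipticcontinuous}, \ref{le:divergencecontinuous}) gives
\begin{displaymath}
\|\bm \xi_u\|_{\mathrm{DG}}^2 \lesssim -a(\bm \eta_u, \bm \xi_u) - b(\bm \xi_u, \eta_p) - b(\bm \xi_u, \xi_p),
\end{displaymath}
while the second error equation tested against $q_h = \xi_p$ recycles $b(\bm \xi_u, \xi_p) = -b(\bm \eta_u, \xi_p)$. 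A Young-type absorption of $\|\bm \xi_u\|_{\mathrm{DG}}$ then produces $\|\bm \xi_u\|_{\mathrm{DG}} \lesssim \|\bm \eta_u\|_{\mathrm{DG}} + \|\eta_p\|_{L^2(\Omega)} + \|\xi_p\|_{L^2(\Omega)}$. To handle $\|\xi_p\|_{L^2(\Omega)}$ I invoke the inf-sup hypothesis \eqref{eq:infsupcondition}, which supplies $\bm w_h \in \bm V_h^k$ with $\|\bm w_h\|_{\mathrm{DG}} \le 1$ and $\beta \|\xi_p\|_{L^2(\Omega)} \le b(\bm w_h, \xi_p)$; rewriting $b(\bm w_h, \xi_p) = -a(\bm \xi_u + \bm \eta_u, \bm w_h) - b(\bm w_h, \eta_p)$ via the first error equation and applying the two continuity bounds yields $\|\xi_p\|_{L^2(\Omega)} \lesssim \|\bm \xi_u\|_{\mathrm{DG}} + \|\bm \eta_u\|_{\mathrm{DG}} + \|\eta_p\|_{L^2(\Omega)}$. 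Combining the two estimates gives $\|\bm \xi_u\|_{\mathrm{DG}} + \|\xi_p\|_{L^2(\Omega)} \lesssim \|\bm \eta_u\|_{\mathrm{DG}} + \|\eta_p\|_{L^2(\Omega)}$.

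A triangle inequality converts this into $\|\bm u - \bm u_h\|_{\mathrm{DG}} + \|p - p_h\|_{L^2(\Omega)} \lesssim \|\bm \eta_u\|_{\mathrm{DG}} + \|\eta_p\|_{L^2(\Omega)}$, and the two interpolation estimates already in hand close the argument: the vector-valued version of Theorem \ref{th:interpolationerrorDG} gives $\|\bm \eta_u\|_{\mathrm{DG}} \lesssim h^k \|\bm u\|_{H^{k+1}(\Omega)}$ and Lemma \ref{le:reconL2error} gives $\|\eta_p\|_{L^2(\Omega)} \lesssim h^{k'+1} \|p\|_{H^{k'+1}(\Omega)}$, so the slower of the two rates is $h^{s}$ with $s = \min(k, k'+1)$. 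The main obstacle, in my view, is the careful bookkeeping in the consistency step: one has to verify that the symmetrization penalty and the Dirichlet-boundary contributions in $l(\cdot)$ cancel exactly when inserting the smooth exact solution, so that the orthogonality identities hold cleanly; once this is secured, the inf-sup reduction to approximation errors is routine.
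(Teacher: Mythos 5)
Your argument is sound and reaches the stated estimate, but it follows a genuinely different route from the paper. The paper works abstractly with the discrete kernel $\bm Z(\bm g)=\{\bm v\in\bm V_h^k: b(\bm v,q)=\int_{\mathcal E_h^b}\bm g\cdot\bm n\,q\,\ds,\ \forall q\in Q_h^{k'}\}$: it first derives the quasi-optimal bound $\|\bm u-\bm u_h\|_{\mathrm{DG}}\lesssim\|\bm u-\bm w\|_{\mathrm{DG}}+\|p-q\|_{L^2(\Omega)}$ for $\bm w\in\bm Z(\bm g)$ (exploiting that $\bm w-\bm u_h\in\bm Z(0)$ to swap $p_h$ for an arbitrary $q$), and then invokes the inf-sup condition via Proposition~5.1.1 of \cite{boffi2013mixed} to construct a correction $\bm z(\bm v)$ with $\|\bm z(\bm v)\|_{\mathrm{DG}}\lesssim\|\bm u-\bm u_h\|_{\mathrm{DG}}$ so that the infimum over the kernel can be replaced by the infimum over all of $\bm V_h^k$. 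You instead split the error through the reconstruction operator ($\bm\xi_u=\mathcal R^k\bm u-\bm u_h$, etc.) and run a direct energy argument, using the second error equation to trade $b(\bm\xi_u,\xi_p)$ for $-b(\bm\eta_u,\xi_p)$ and the inf-sup condition only to control $\|\xi_p\|_{L^2(\Omega)}$. Your route avoids the kernel-space machinery and the external citation; the paper's route avoids the absorption bookkeeping. The pressure estimate and the final interpolation step are essentially identical in both. You also make the consistency verification explicit, which the paper uses silently; your accounting of the boundary terms in $l(\cdot)$ is correct.

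One step needs more care than you give it. As literally written, ``$\|\bm\xi_u\|_{\mathrm{DG}}\lesssim\|\bm\eta_u\|_{\mathrm{DG}}+\|\eta_p\|_{L^2(\Omega)}+\|\xi_p\|_{L^2(\Omega)}$'' combined with ``$\|\xi_p\|_{L^2(\Omega)}\lesssim\|\bm\xi_u\|_{\mathrm{DG}}+\|\bm\eta_u\|_{\mathrm{DG}}+\|\eta_p\|_{L^2(\Omega)}$'' is circular: two one-sided linear inequalities, each containing the other's left-hand side with unspecified constants, do not close. The argument is rescued by the structure you actually derived before applying Young: in the energy identity the term involving $\xi_p$ appears as $b(\bm\eta_u,\xi_p)\lesssim\|\bm\eta_u\|_{\mathrm{DG}}\|\xi_p\|_{L^2(\Omega)}$, i.e.\ multiplied by the \emph{interpolation} error, not by $\|\bm\xi_u\|_{\mathrm{DG}}$. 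Substituting the pressure bound into that product \emph{before} absorbing (or, equivalently, retaining an arbitrarily small parameter $\delta\|\xi_p\|_{L^2(\Omega)}$ after the Young step and choosing $\delta$ small relative to the inf-sup constant) yields $\|\bm\xi_u\|_{\mathrm{DG}}\lesssim\|\bm\eta_u\|_{\mathrm{DG}}+\|\eta_p\|_{L^2(\Omega)}$ cleanly. State the absorption in that order and the proof is complete.
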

\begin{proof}
We define $\bm Z (\bm g)\subset \bm V_h^{k}$ by
\begin{equation}\label{eq:kernel_space}
\bm Z(\bm g)=\{\bm v \in \bm V_h:b(\bm v ,q )=
\int_{\mathcal E_h^b} \bm g\cdot \bm n q \ds, \ \forall q\in
Q_h^{k'} \}.
\end{equation}

Consider $\bm w \in \bm Z (\bm g)$ and $q\in Q_h^{k'}$. Since Lemma
\ref{le:ellipticcoercivity}, we have
\begin{align*}
\|\bm w -\bm u_h\|_{\mathrm{DG}}^2 &\lesssim a(\bm w -\bm u_h, \bm w- \bm u_h)
\\ &\lesssim a(\bm w -\bm u, \bm w- \bm u_h)+ a(\bm u -\bm u_h, \bm w-
\bm u_h)\\ &=a(\bm w -\bm u, \bm w- \bm u_h) - b(\bm w -\bm u_h, p-
p_h).
\end{align*}
Since $\bm w -\bm u_h\in \bm Z(0)$, the $q_h$ can be replaced by any
$q\in Q_h^{k'}$, we obtain
\begin{displaymath}
\|\bm w -\bm u_h\|_{\mathrm{DG}}^2\lesssim a(\bm w -\bm u, \bm w- \bm u_h) -
b(\bm w -\bm u_h, p- q).
\end{displaymath}
Using Lemma \ref{le:ellipticcontinuous} and
\ref{le:divergencecontinuous} gives
\begin{equation}\label{eq:kernel_ineq}
\|\bm u -\bm u_h\|_{\mathrm{DG}} \lesssim \|\bm u- \bm w\|_{\mathrm{DG}} + \| p-
q\|_{L^2(\Omega)}, \ \bm w\in \bm Z(\bm g), q\in Q_h^{k'}.
\end{equation}
Then we deal with an arbitrary function in $\bm V_h^k$. For the fixed
$\bm v \in \bm V_h^k$, we consider the problem of finding $\bm z(\bm
v) \in \bm V_h^k$, such that
\begin{displaymath}
 b(\bm z(\bm v) , q)= b(\bm u-\bm u_h,q), \ q\in Q_h^{k'}.
\end{displaymath}
Thanks to the inf-sup condition \eqref{eq:infsupcondition} and
\cite[Proposition 5.1.1,p.270]{boffi2013mixed}. We can find a solution
$\bm z\in \bm V_h^k$, such that
\begin{equation}\label{eq:inf_sup_ineq}
\| \bm z(\bm v) \|_{\mathrm{DG}} \lesssim \sup_{0\neq q\in Q_{h}^{k'}}
\frac{b(\bm z(\bm v), q)}{\|q\|_{L^2(\Omega)}}= \sup_{0 \neq q\in
  Q_{h}^{k'}} \frac{b(\bm u -\bm u_h, q)}{\|q\|_{L^2(\Omega)}}
\lesssim \|\bm u -\bm u_h\|_{\mathrm{DG}}.
\end{equation}
Since
\begin{displaymath}
  b(\bm z(\bm v) +\bm v,q)=b (\bm u_h, q)=\int_{\mathcal E_h^b}
\bm g\cdot \bm n q \ds, \ \forall q\in Q_h^{k'},
\end{displaymath}
we have $\bm z(\bm v) +\bm v \in \bm Z (\bm g)$. Taking $\bm w=\bm
z(\bm v) +\bm v$ in \eqref{eq:kernel_ineq} yields
\begin{equation}\label{eq:kernel_app}
\|\bm u -\bm u_h\|_{\mathrm{DG}} \lesssim \|\bm u -\bm v\|_{\mathrm{DG}} + \|\bm z(\bm
v)\|_{\mathrm{DG}} + \|p-q\|_{L^2(\Omega)}.
\end{equation}
together with \eqref{eq:inf_sup_ineq},
\begin{equation}\label{eq:velocity_app}
\begin{split}
\|\bm u -\bm u_h\|_{\mathrm{DG}} &\lesssim \inf_{\bm v \in \bm V_h^k} \|\bm u
-\bm v\|_{\mathrm{DG}} + \inf_{q\in Q_h^{k'}}\|p-q\|_{L^2(\Omega)}\\ &\lesssim
h^{k}\|\bm u\|_{H^{k+1}(\Omega)} + h^{k'+1}
\|p\|_{H^{k'+1}(\Omega)}.
\end{split}
\end{equation}

Next we consider the pressure term, let $q\in Q_h^{k'}$. Using the
inf-sup condition in \eqref{eq:infsupcondition} we have
\begin{equation}\label{eq:pressure_ineq}
\begin{split}
\|q-p_h\|_{L^2(\Omega)}& \lesssim \sup_{0\neq \bm v \in \bm V_h^k}
\frac{b(\bm v, q-p_h)}{\|\bm v\|_{\mathrm{DG}}}\\ &=\sup_{0\neq \bm v \in \bm
  V_h^k} \frac{b(\bm v, q-p)+b(\bm v, p-p_h)}{\|\bm
  v\|_{\mathrm{DG}}}\\ &=\sup_{0\neq \bm v \in \bm V_h^k} \frac{b(\bm v, q-p)-
  a(\bm u -\bm u_h ,\bm v)}{\|\bm v\|_{\mathrm{DG}}} \\ &\lesssim
\|p-q\|_{L^2(\Omega)} + \|\bm u -\bm u_h\|_{\mathrm{DG}}.
\end{split}
\end{equation}
From the triangle inequality and \eqref{eq:pressure_ineq}, we obtain
\begin{equation}\label{eq:pressure_app}
\begin{split}
\|p-p_h\|_{L^2(\Omega)} &\lesssim \|\bm u -\bm u_h\|_{\mathrm{DG}} + \inf_{q\in
  Q_h^{k'}}\|p-q\|_{L^2(\Omega)} \\ 
  &\lesssim h^{k}\|\bm
u\|_{H^{k+1}(\Omega)} + h^{k'+1} \|p\|_{H^{k'+1}(\Omega)},
\end{split}
\end{equation}
and the proof is concluded by combining \eqref{eq:velocity_app} and
\eqref{eq:pressure_app}.
\end{proof}
% This estimate follows from the approximation properties of the
% constructed finite element space, the continuity and coercivity of the
% bilinear form $a(\cdot,\cdot)$ and the inf-sup condition, see
% \cite{hansbo2002discontinuous, villardi2009high, boffi2013mixed} for
% the detailed proof of Theorem \ref{th:prioriestimate}.

\section{Inf-sup test}
\label{sec:infsuptest}
In this section, we perform the inf-sup tests with some
velocity-pressure finite element space pairs to validate the inf-sup
condition numerically.  After the discretization, the matrix form of
the problem \eqref{eq:weakform} is obtained,
\begin{displaymath}
  \begin{bmatrix}
    A & B^T\\ B & 0 \\
  \end{bmatrix}
  \begin{bmatrix}
    \bm{\mathrm U}\\ \bm{\mathrm P}\\
  \end{bmatrix}
  = \begin{bmatrix} \bm{\mathrm F}\\ \bm{\mathrm G}\\
  \end{bmatrix},
\end{displaymath}
where the matrix $A$ and the matrix $B$ associate with the bilinear
form $a(\cdot, \cdot)$ and $b(\cdot, \cdot)$, respectively. The vector
$\bm U, \bm P$ is the solution vector corresponding to $\bm u_h, p_h$
and $\bm F, \bm G$ is the right hand side corresponding to $\bm f, \bm
g$.

Then the numerical inf-sup test is based on the following lemma.
\begin{lemma}
  Let $S$ and $T$ be symmetric matrices of the norms
  $\|\cdot\|_{\mathrm{DG}}$ in $\bm V_h^k$ and
  $\|\cdot\|_{L^2(\Omega)}$ in $Q_{h}^{k'}$, respectively, and let
  $\mu_{\min}$ be the smallest nonzero eigenvalue defined by the
  following generalized eigenvalue problem:
  \begin{displaymath}
    B^TS^{-1}B\bm{V}=\mu_{\min}^2T\bm{V},
  \end{displaymath}
  then the value of $\beta$ is simply $\mu_{\min}$.
  \label{le:numericalinfsup}
\end{lemma}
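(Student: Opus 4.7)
The plan is to rewrite the inf-sup quotient entirely in terms of coefficient vectors and matrix products, and then identify the resulting minimum with the smallest nonzero eigenvalue of the generalized eigenvalue problem. Concretely, let $\bm V$ and $\bm Q$ denote the coefficient vectors associated with $\bm v_h\in \bm V_h^k$ and $q_h\in Q_h^{k'}$ respectively. By the very definition of $A$, $B$, $S$, $T$ we have the matrix representations
\begin{displaymath}
  b(\bm v_h,q_h)=\bm V^{T}B^{T}\bm Q,\qquad
  \|\bm v_h\|_{\mathrm{DG}}^{2}=\bm V^{T}S\bm V,\qquad
  \|q_h\|_{L^{2}(\Omega)}^{2}=\bm Q^{T}T\bm Q.
\end{displaymath}
Hence the continuous inf-sup quantity $\beta$ from \eqref{eq:infsupcondition} becomes
\begin{displaymath}
  \beta \;=\; \inf_{\bm Q\neq 0}\ \sup_{\bm V\neq 0}\
    \frac{\bm V^{T}B^{T}\bm Q}{\sqrt{\bm V^{T}S\bm V}\sqrt{\bm Q^{T}T\bm Q}}.
\end{displaymath}

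Next I would carry out the inner supremum explicitly. Since $S$ is symmetric positive definite, it induces an inner product $\langle\cdot,\cdot\rangle_{S}$ on the velocity coefficient space, and one can write $\bm V^{T}B^{T}\bm Q=\langle\bm V,\,S^{-1}B^{T}\bm Q\rangle_{S}$. The Cauchy-Schwarz inequality in $\langle\cdot,\cdot\rangle_{S}$ (with equality attained at $\bm V=S^{-1}B^{T}\bm Q$) then gives
\begin{displaymath}
  \sup_{\bm V\neq 0}\ \frac{\bm V^{T}B^{T}\bm Q}{\sqrt{\bm V^{T}S\bm V}}
   \;=\; \sqrt{\bm Q^{T}B S^{-1}B^{T}\bm Q}.
\end{displaymath}
Substituting back reduces the problem to the generalized Rayleigh quotient
\begin{displaymath}
  \beta^{2} \;=\; \inf_{\bm Q\neq 0}\
   \frac{\bm Q^{T}\,B S^{-1}B^{T}\,\bm Q}{\bm Q^{T}T\bm Q},
\end{displaymath}
which coincides with the smallest eigenvalue of the pencil $(B S^{-1}B^{T},\,T)$, i.e.\ of the generalized eigenvalue problem stated in the lemma.

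Finally I would address the one subtle point, namely the qualifier \emph{nonzero}. Because the pressure is determined only up to an additive constant, $B^{T}$ has a one-dimensional kernel spanned by the coefficient vector of a constant pressure, so $BS^{-1}B^{T}$ is $T$-singular on that kernel and the quotient vanishes there. Since constants are quotiented out of $Q_h^{k'}$ when testing the inf-sup condition, the relevant infimum is taken over $\bm Q$ that are $T$-orthogonal to this kernel; on that complement $BS^{-1}B^{T}$ is positive definite, and the minimum of the Rayleigh quotient is exactly $\mu_{\min}^{2}$, the smallest nonzero generalized eigenvalue. Taking square roots yields $\beta=\mu_{\min}$.

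The main obstacle I expect is bookkeeping: making sure the matrix orientations in $B^{T}S^{-1}B$ vs.\ $BS^{-1}B^{T}$ are consistent with the conventions fixed by the block system in the paper, and carefully isolating the kernel of $B^{T}$ (the constant pressure mode) so that the characterization of $\beta$ matches the \emph{smallest nonzero} eigenvalue rather than zero. The analytical content — one Cauchy-Schwarz step followed by a Rayleigh quotient identification — is otherwise completely standard.
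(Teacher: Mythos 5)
Your argument is correct and is essentially the standard proof: the paper itself does not prove this lemma but simply cites \cite{boffi2013mixed, malkus1981eigenproblems}, and the Cauchy--Schwarz/Rayleigh-quotient reduction you give (with the constant-pressure kernel explaining the qualifier \emph{nonzero}) is exactly the argument found in those references. The only caveat is the one you already flag: with $B$ defined as the $(2,1)$ block of the saddle-point system, the pressure-space pencil is $BS^{-1}B^{T}$ rather than the $B^{T}S^{-1}B$ written in the lemma, a purely notational transposition.
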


The proof of this lemma can be found in \cite{boffi2013mixed,
  malkus1981eigenproblems}. In numerical tests, we would consider a
sequence of successive refined meshes and monitor $\mu_{\min}$ of each
mesh. If a sharp decrease of $\mu_{\min}$ is observed while the mesh
size approaches to zero, we could predict that the pair of
approximation spaces violates the inf-sup condition. Otherwise, if
$\mu_{\min}$ stabilizes as the mesh is refined, we can conclude that
the inf-sup test is passed.

The numerical tests are conducted with following settings, let
$\Omega$ be the unit square domain in two dimension and we consider
two groups of quasi-uniform meshes which are generated by the software
\emph{gmsh} \cite{geuzaine2009gmsh}. The first ones are triangular
meshes(see Fig \ref{fig:infsuptesttriangularmesh}) and the second ones
consist of triangular and quadrilateral elements(see Fig
\ref{fig:infsuptestmixedmesh}). In both cases, the mesh size $h$ is
taken by $h=\frac1n,\ n=10,20,30,\cdots 80$.
\begin{figure}[!htp]
  \centering
  \includegraphics[width=0.4\textwidth]{./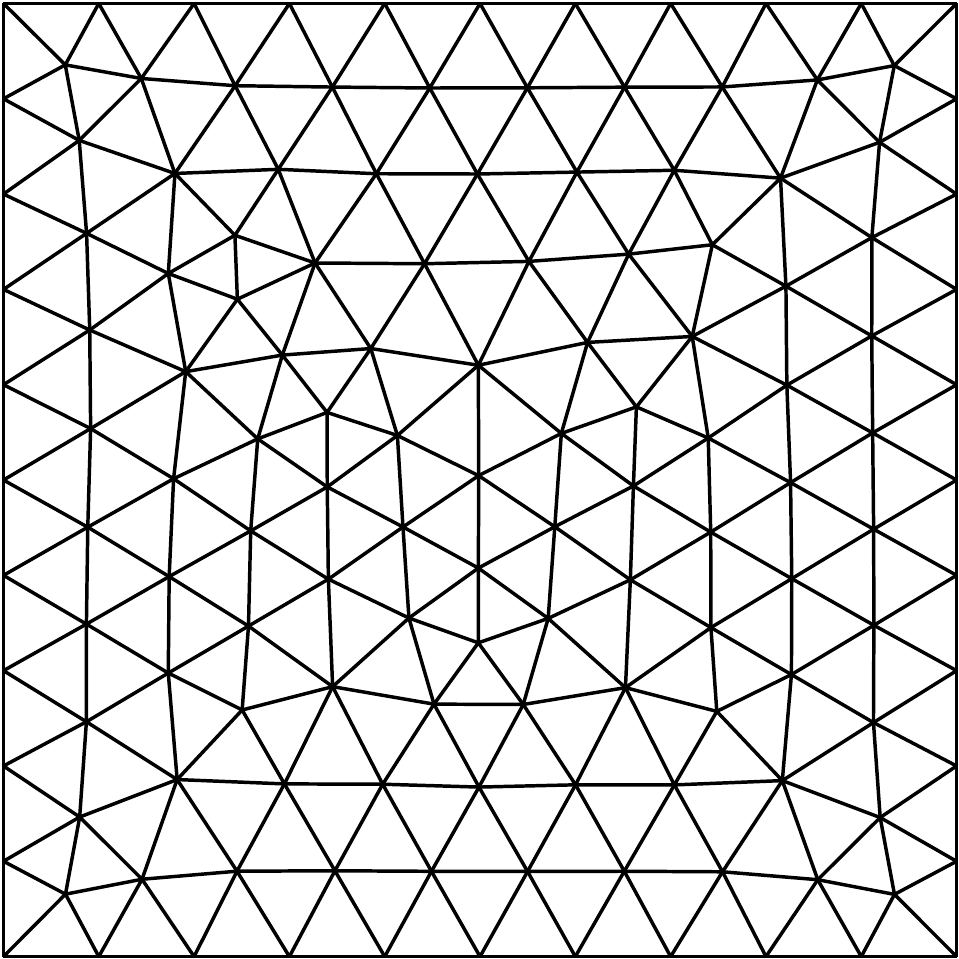}
  \hspace{25pt}
  \includegraphics[width=0.4\textwidth]{./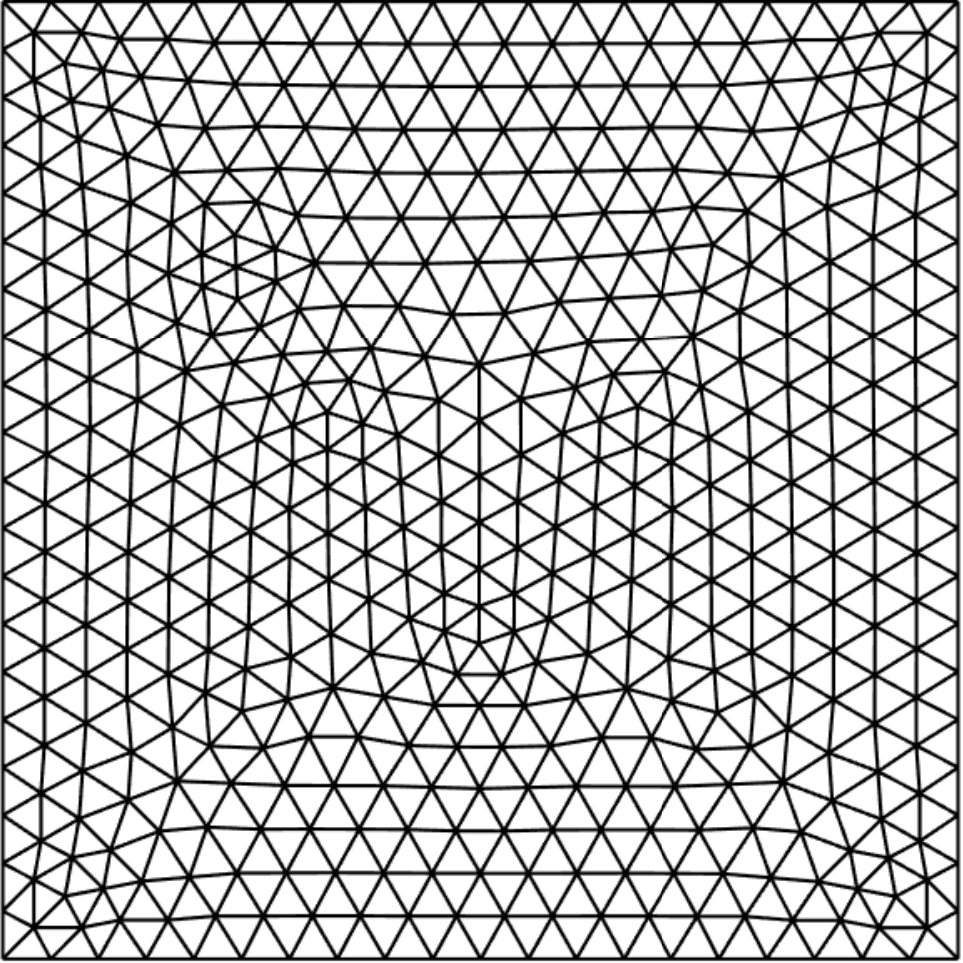}
  \caption{The triangular meshes, $h=\frac1{10}$(left)/$h=\frac1{
      20}$(right).}
  \label{fig:infsuptesttriangularmesh}
\end{figure}
\begin{figure}[!htp]
  \centering
  \includegraphics[width=0.4\textwidth]{./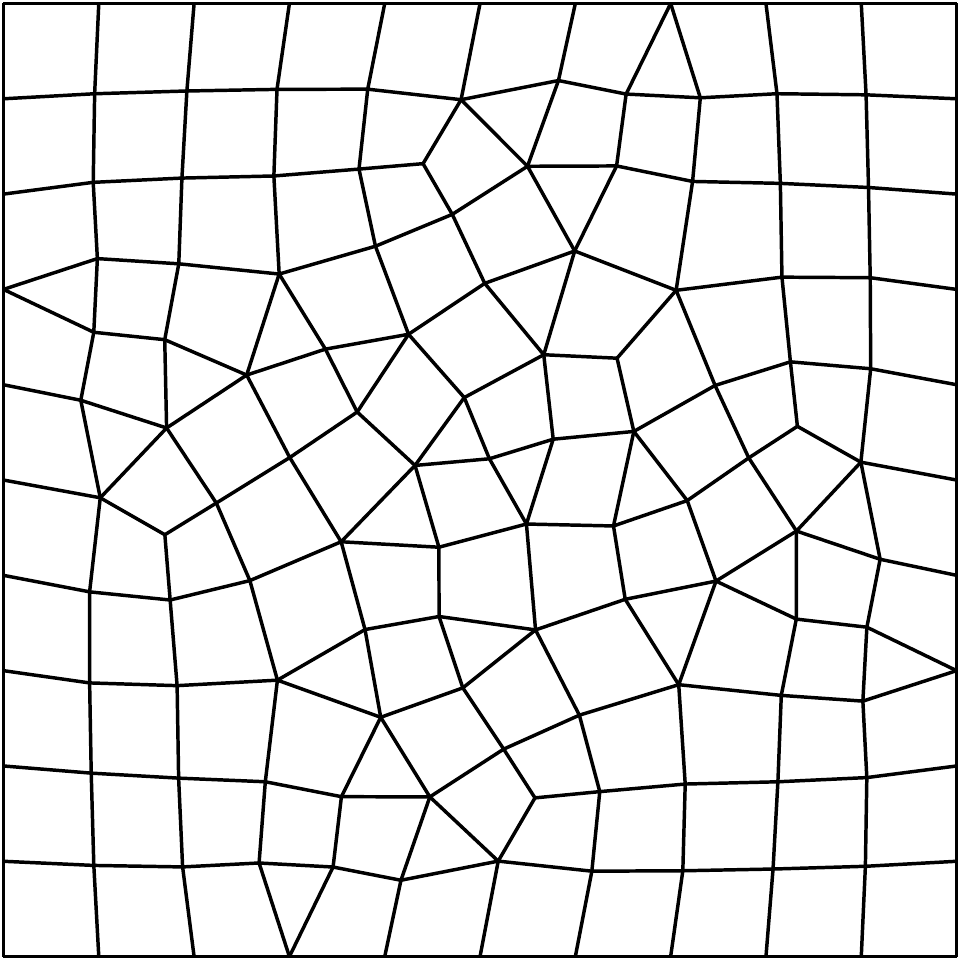}
  \hspace{25pt}
  \includegraphics[width=0.4\textwidth]{./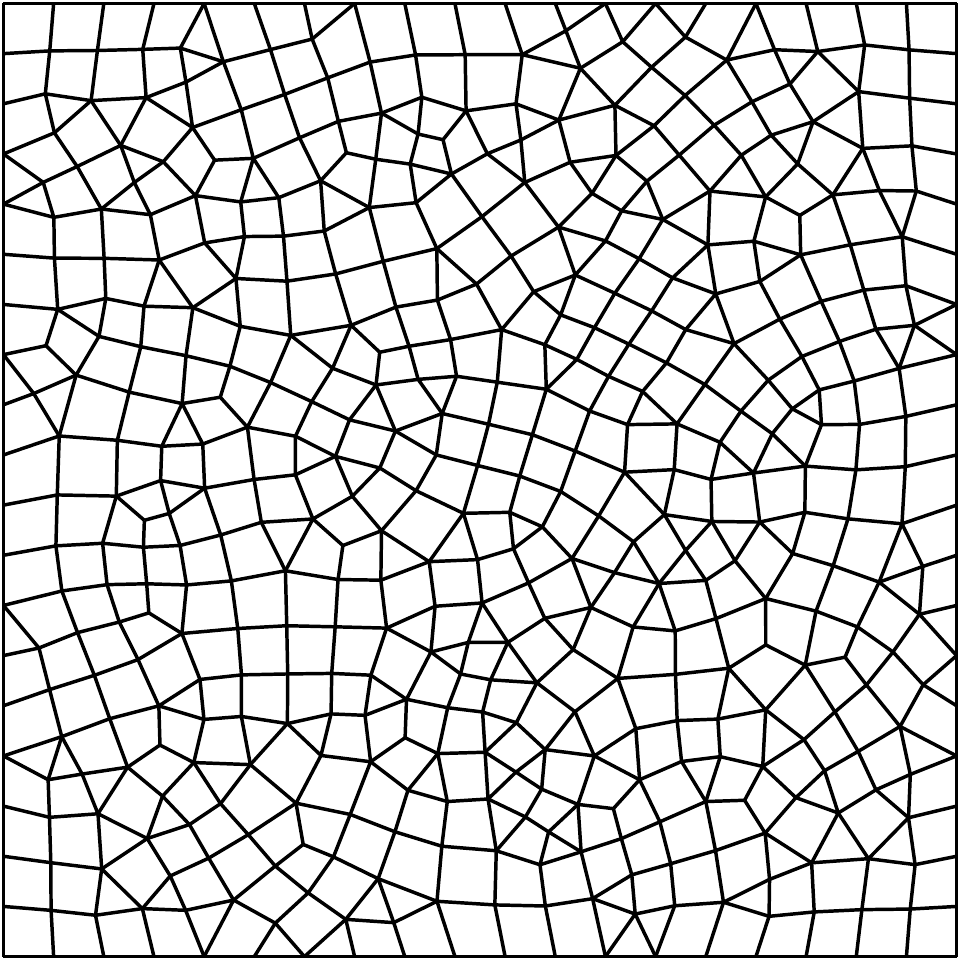}
  \caption{The mixed meshes, $h=\frac1{10}$(left)/$h=\frac1{2
      0}$(right).}
  \label{fig:infsuptestmixedmesh}
\end{figure}

With the given mesh partition, the finite element space can be
constructed. As we mention before, for element $K$, $\# S(K)$ should
be large enough to ensure the uniform upper bound $\Lambda_m$. For
simplicity, $\# S(K)$ is taken uniformly and for different order $k$
we list a group of reference values of $\# S(K)$ for both meshes in
Table \ref{tab:patchnumber2d}.

\begin{table}[htp]
  \centering
  \caption{choices of $\# S(K)$ for $1\leq k \leq 5$}
  \vspace{-8pt}
  \label{tab:patchnumber2d}
  \scalebox{1.10}{
  \begin{tabular}{|l|l|p{0.6cm}|p{0.6cm}|p{0.6cm}|p{0.6cm}|p{0.6cm}|}
    \hline \multicolumn{2}{|l|}{order $k$}& 1 & 2 & 3 & 4 & 5\\ \hline
    \multirow{2}{*}{$\# S(K)$} & triangular mesh & 5 & 9 & 18 & 25 &
    32\\ \cline{2-7} & mixed mesh& 6 & 10 & 20 & 28 & 35\\ \hline
  \end{tabular}
  }
\end{table}

\newcommand{\br}[1]{\uppercase\expandafter{\romannumeral#1}}

We consider three choices of velocity-pressure pairs:
\begin{itemize}
  \item \textbf{Method \br 1.} $ (\bm u_h, p_h)\in\bm V_{h}^{k}\times
    Q_{h}^{k-1},\ 1\leq k \leq 5.$
  \item \textbf{Method \br 2.} $ (\bm u_h, p_h)\in\bm V_{h}^{k}\times
    Q_{h}^{k},\ 1\leq k \leq 5.$
  \item \textbf{Method \br 3.} $ (\bm u_h, p_h)\in\bm V_{h}^{k}\times
    Q_{h}^{0},\ 1\leq k \leq 5.$
\end{itemize}
Here the space $Q_h^0$ is just the piecewise constant space. These
methods correspond to the choices $k'=k,\ k-1,\ 0$, respectively.

\textbf{Method \br 1.} The combination of polynomial degrees for the
velocity and pressure approximation spaces is common in traditional
FEM and DG while $k\geq 2$, known as Taylor-Hood elements. Numerical
results for the method \br 1 are shown in Fig \ref{fig:m1infsuptest}.
$\mu_{\min}$ appears to be bounded in every case, which clearly
indicates the method \br 1 has passed the inf-sup test. It is
noticeable that $\bm V_{h}^{1}\times Q_{h}^{0}$ is a stable pair which
will lead to the locking-phenomenon in traditional FEM.
\begin{figure}[htp]
  \centering
  \includegraphics[width=0.48\textwidth]{./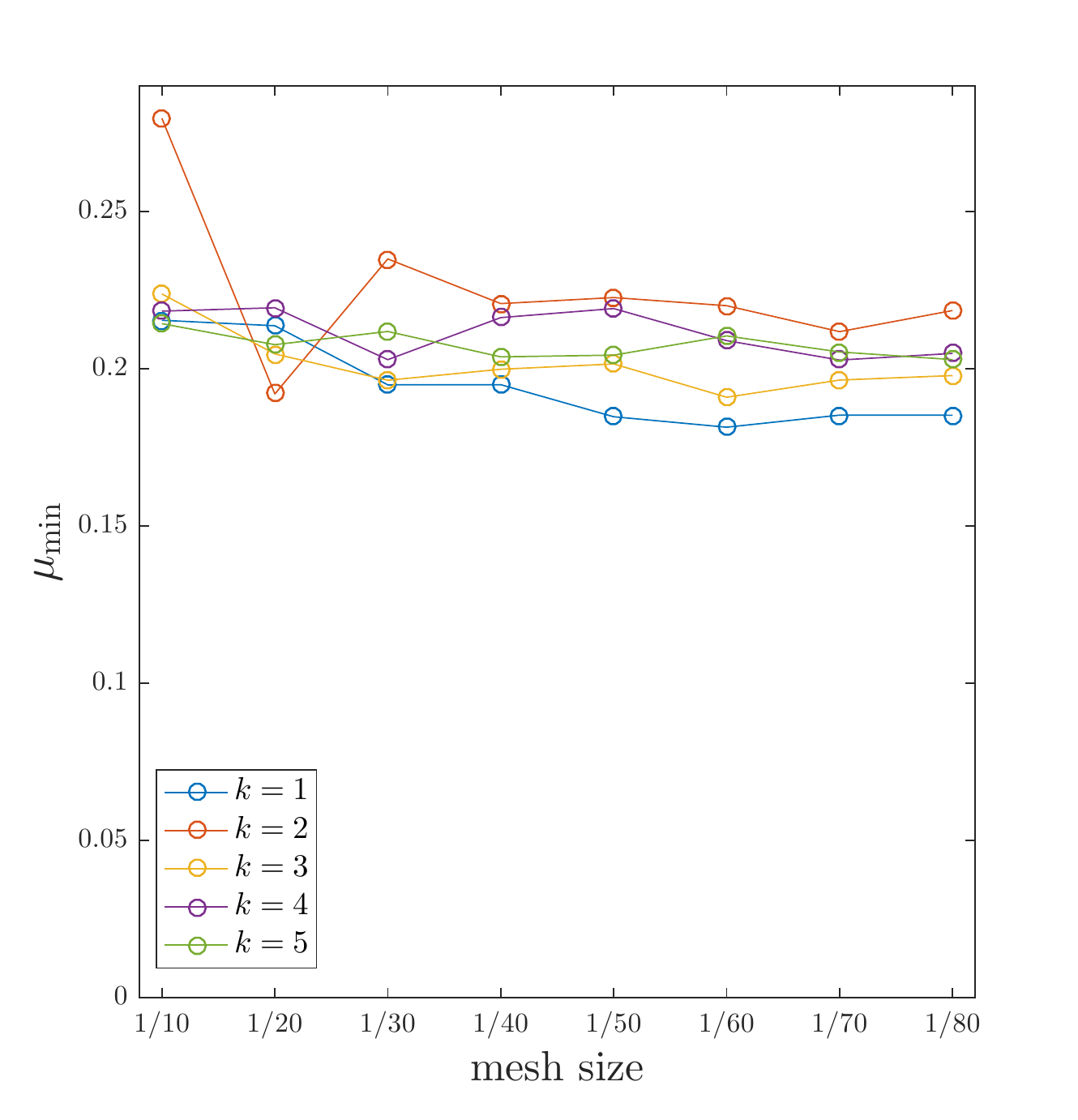}
  \includegraphics[width=0.48\textwidth]{./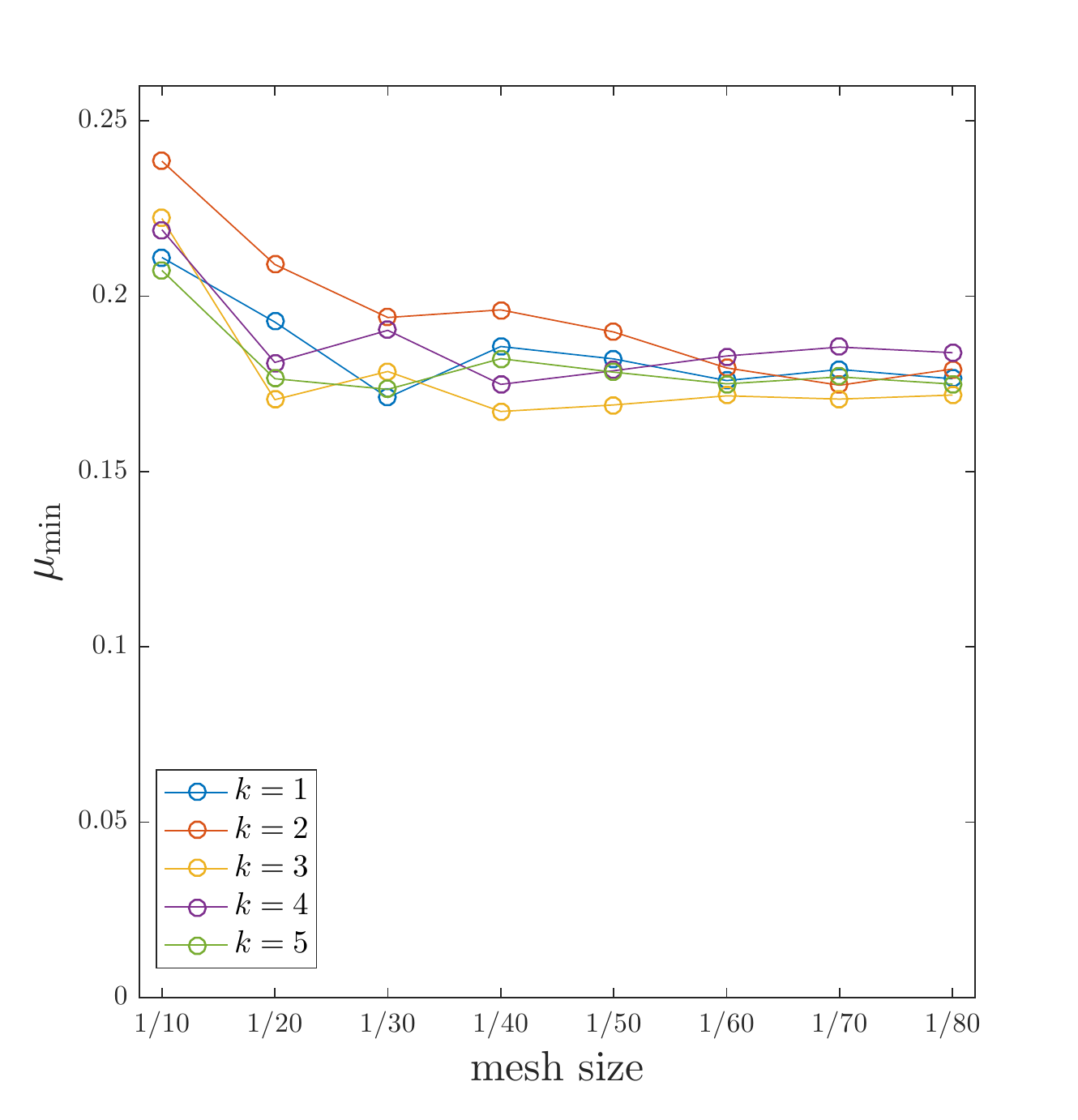}
  \caption{Inf-sup tests for method \br 1 on triangular meshes (left)
    / mixed meshes (right)}
  \label{fig:m1infsuptest}
\end{figure}

\textbf{Method \br 2.} We consider equal polynomial degrees for both
approximation spaces. This method is more efficient because the
reconstruction procedure is carried out only once.  Fig
\ref{fig:m2infsuptest} displays the history of $\mu_{\min}$. Similar
with method \br 1, the values of $\mu_{\min}$ stabilize as $h$
decreases to zero. This method surprisingly keeps valid with $\bm
V_{h}^{1}\times Q_{h}^{1}$ which is unstable due to the spurious
pressure models in traditional FEM.

\begin{figure}[htp]
  \centering
  \includegraphics[width=0.48\textwidth]{./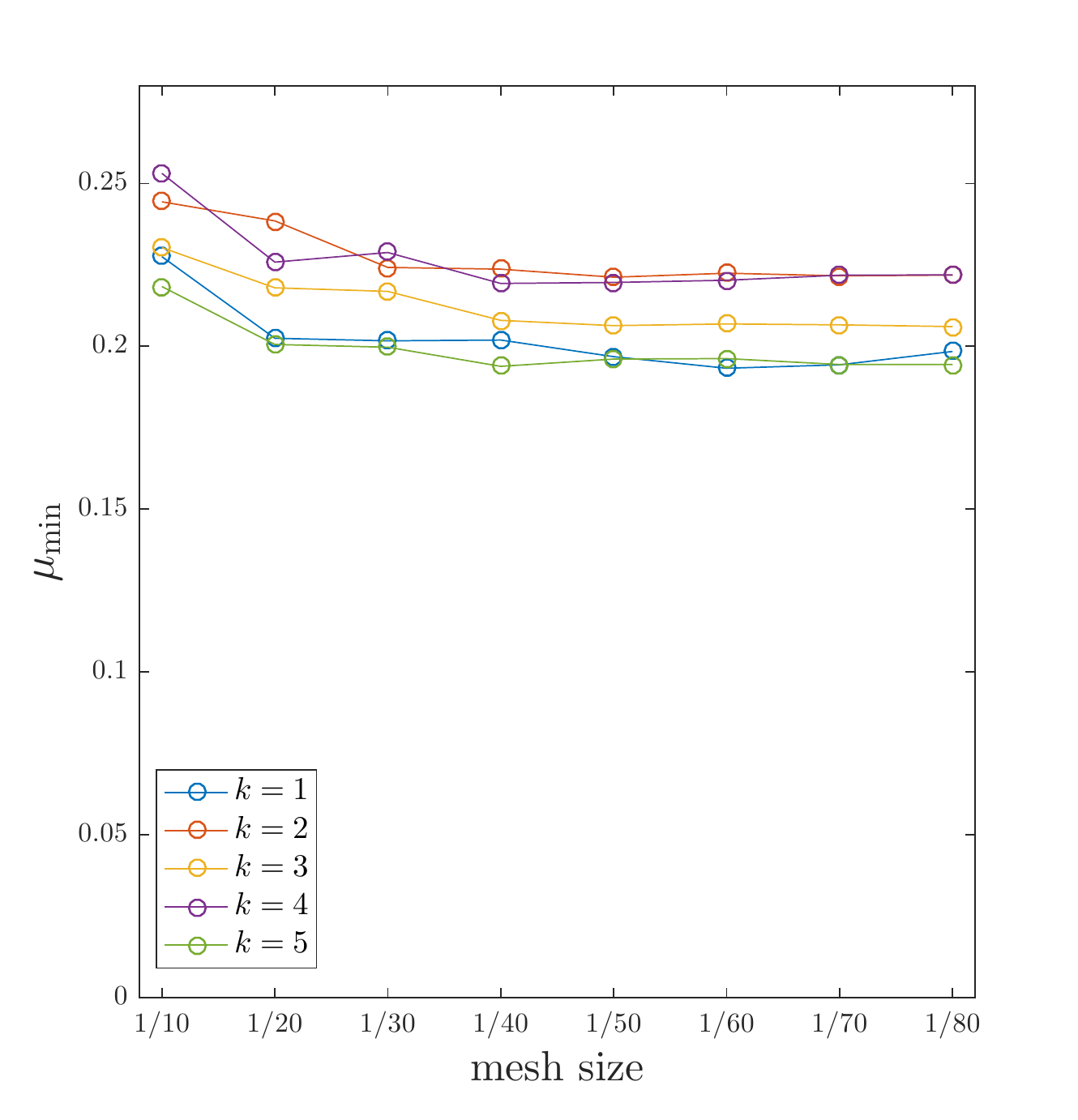}
  \includegraphics[width=0.48\textwidth]{./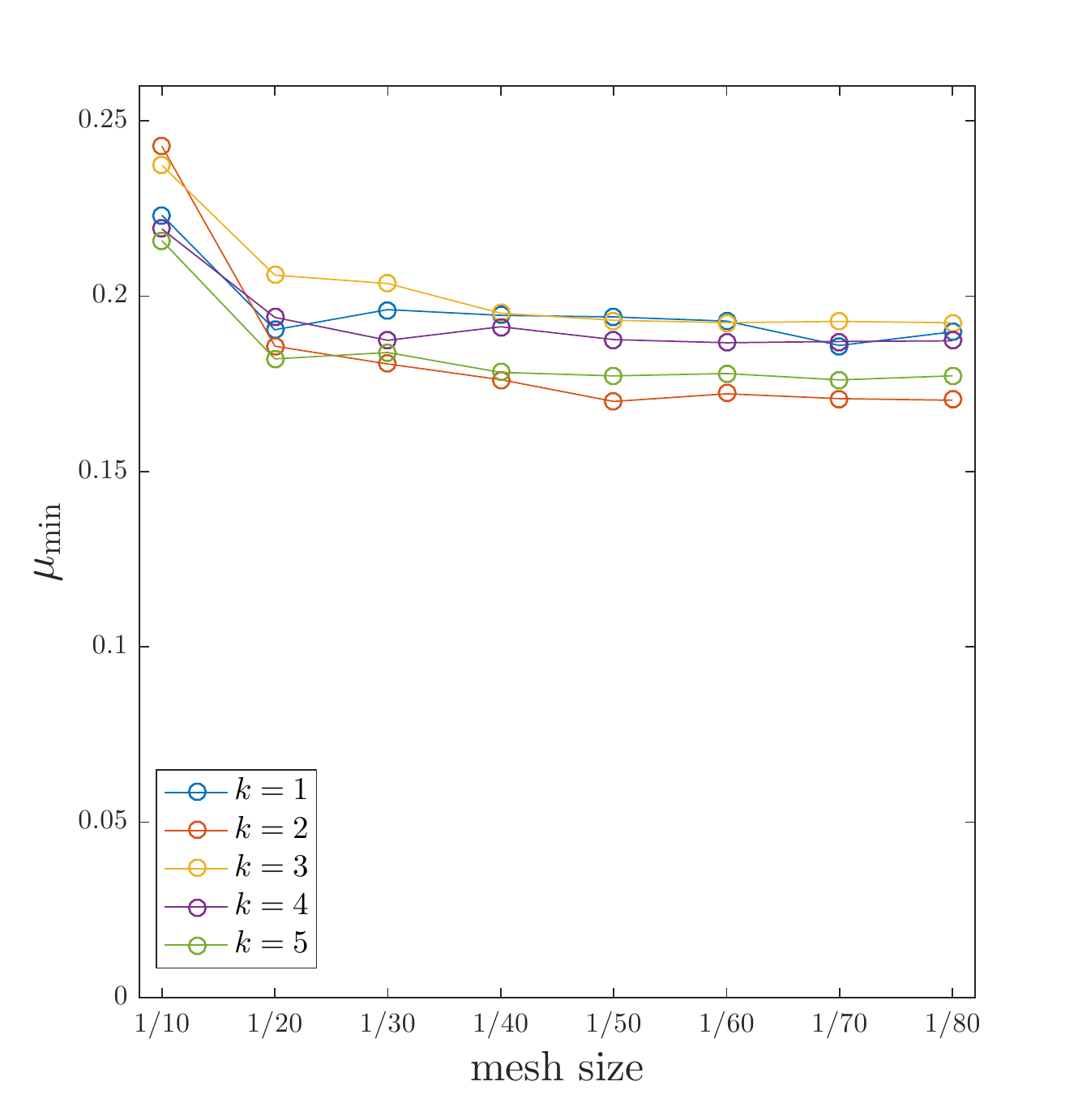}
  \caption{Inf-sup tests for method \br 2 on triangular meshes (left)
    / mixed meshes (right)}
  \label{fig:m2infsuptest}
\end{figure}

\textbf{Method \br 3.} We note that the number of DOFs of our finite
element space, which are always equal to the number of elements in
partition, has no concern to the order of approximation accuracy. In
the sense of that, for all $k$, high order space $V_{h}^{k}$ is in the
same size as the piecewise constant piece $Q_{k}^{0}$. Thus, we take
$\bm V_{h}^{k}$ as the velocity approximation space while we select
$Q_{h}^{0}$ for the pressure. Fig \ref{fig:m3infsuptest} summarizes
the results of this inf-sup test, which show that the inf-sup
condition holds.

\begin{figure}[htp]
  \centering
  \includegraphics[width=0.48\textwidth]{./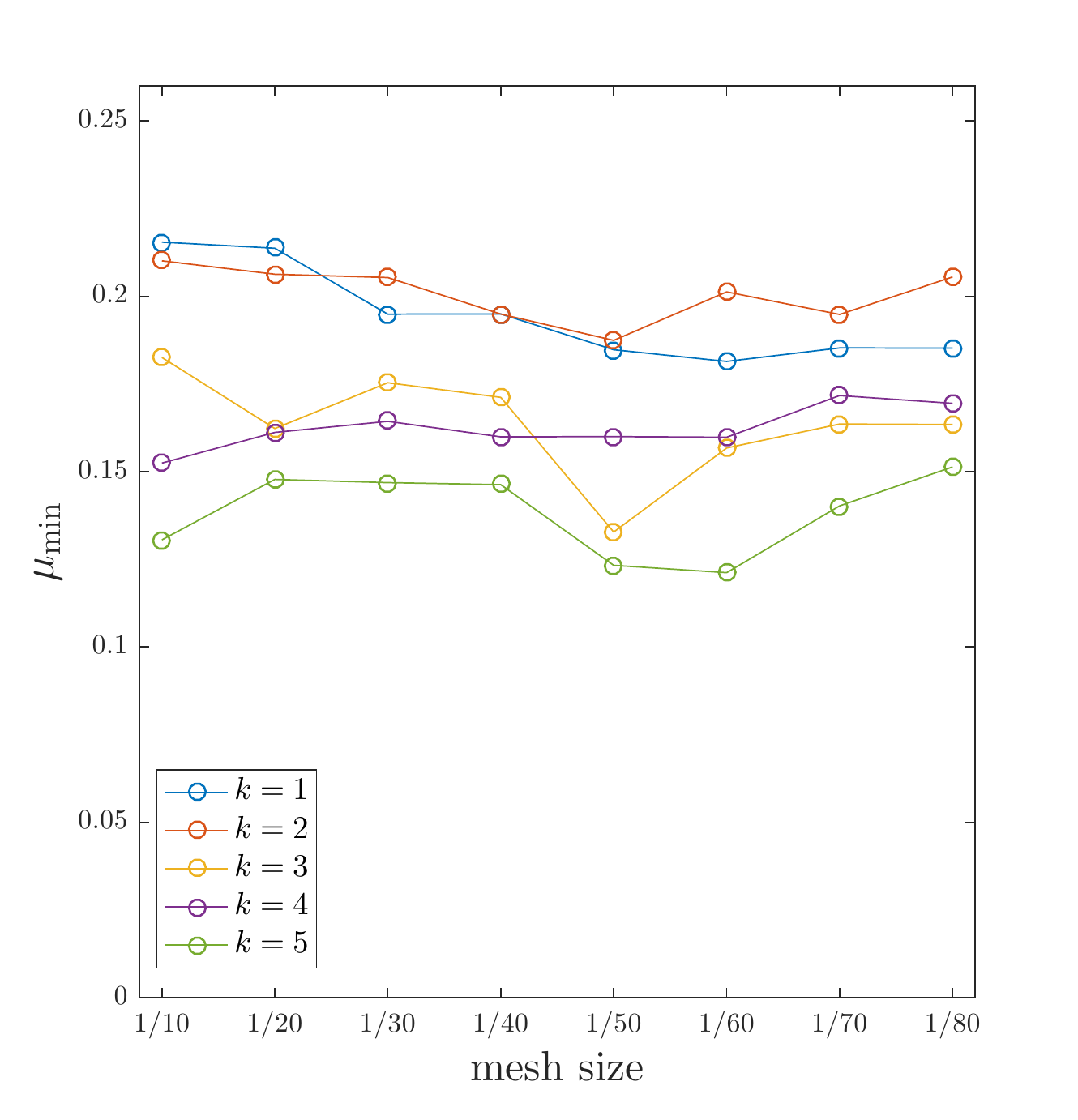}
  \includegraphics[width=0.48\textwidth]{./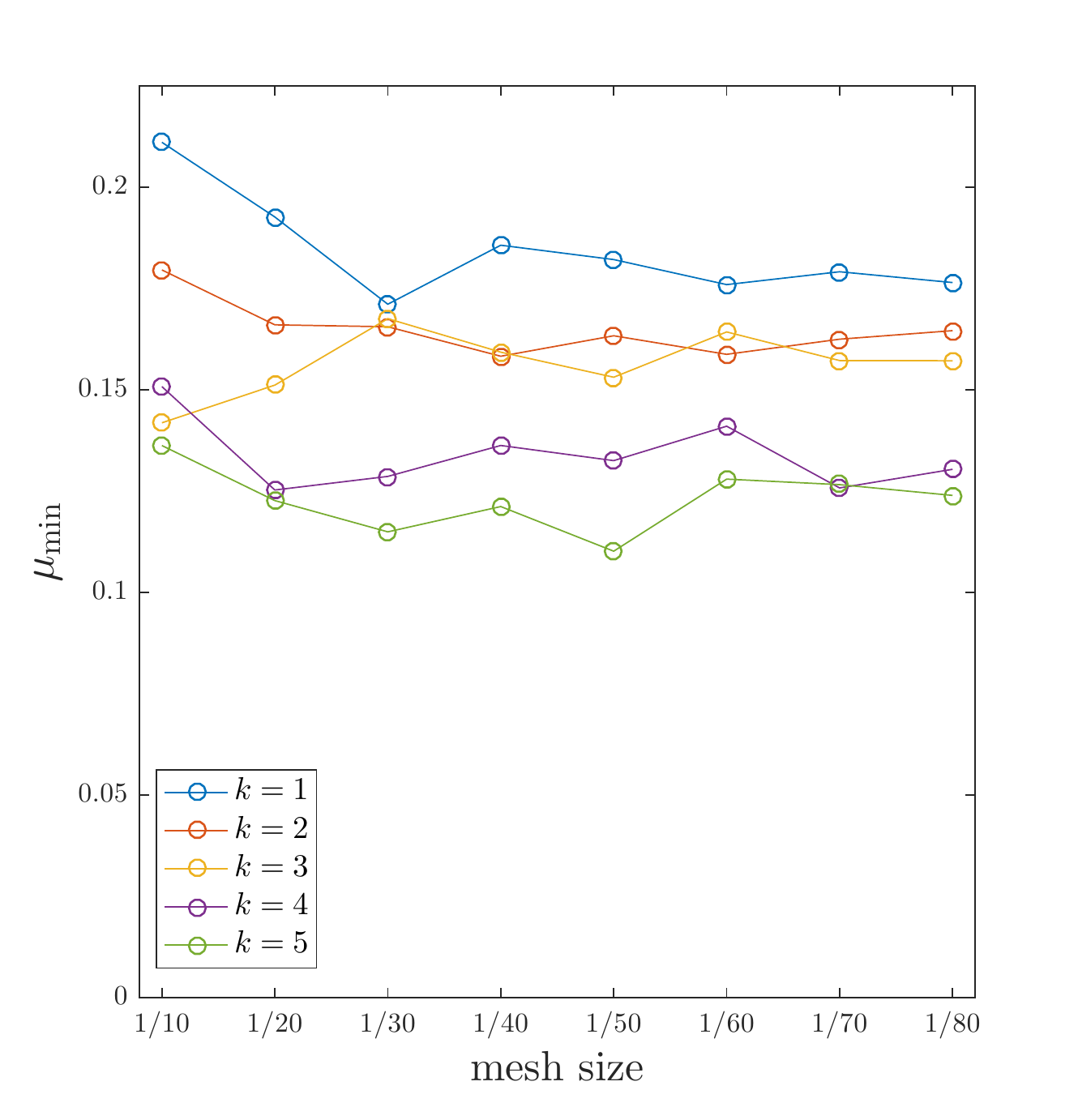}
  \caption{Inf-sup tests for method \br 3 on triangular meshes (left)
    / mixed meshes (right)}
  \label{fig:m3infsuptest}
\end{figure}

The satisfaction of the inf-sup condition has been checked in this
section by the numerical tests. All experiments show that the inf-sup
value $\mu_{\min}$ is bounded. In fact, the combination of two
approximation spaces can be more flexible, such as $\bm
V_{h}^{k}\times Q_{h}^{k+1}$ or $\bm V_{h}^{k}\times Q_{h}^{k+2}$, see
Fig \ref{fig:m4infsuptest} and Fig \ref{fig:m5infsuptest}. Both cases
could pass the inf-sup test.  The numerical results demonstrate that
our finite element space possesses more robust properties than the
traditional finite element method. An analytical proof of the
verification of the inf-sup condition is considered as the future
work.

\begin{figure}[htp]
  \centering
  \includegraphics[width=0.48\textwidth]{./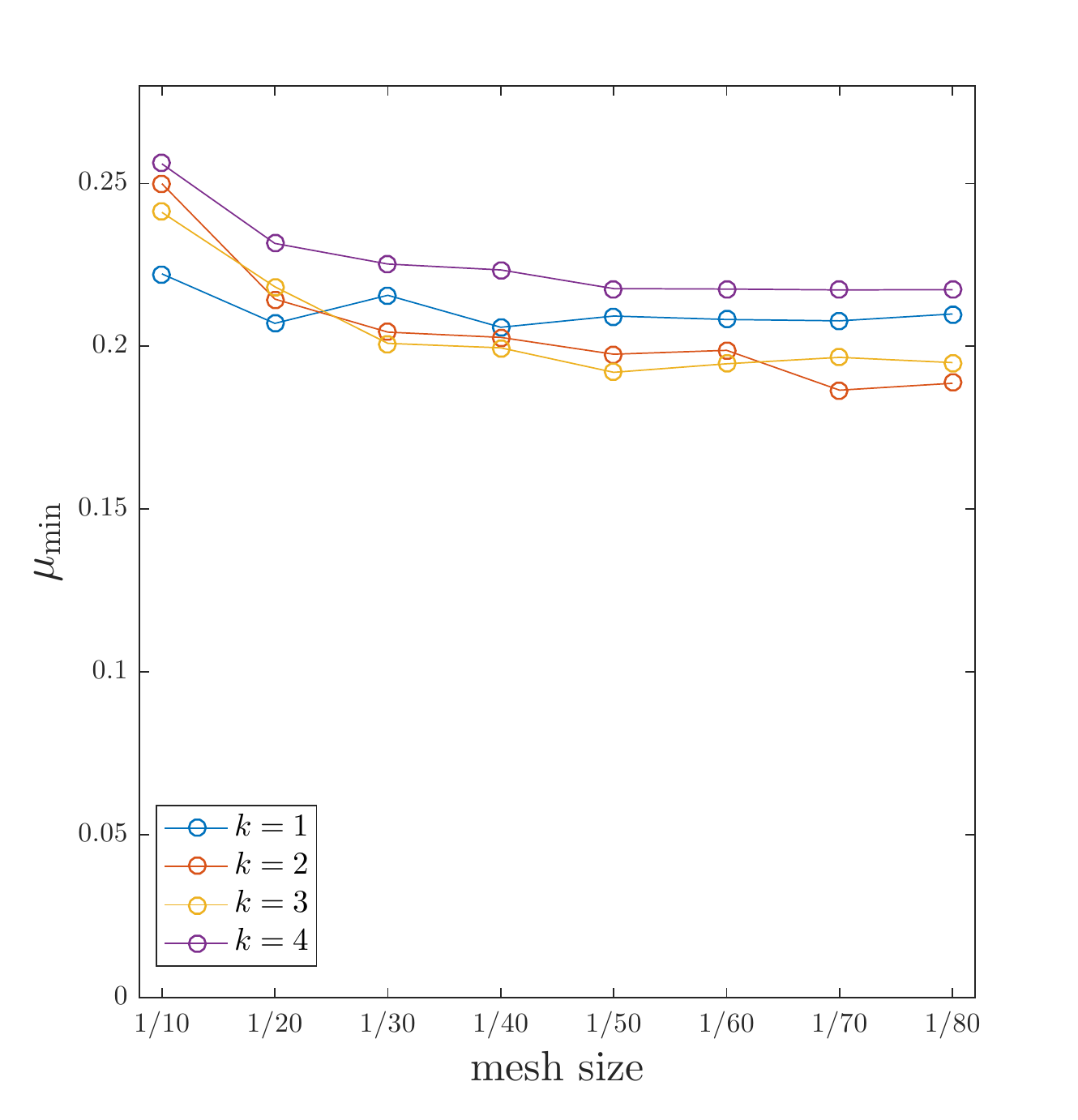}
  \includegraphics[width=0.48\textwidth]{./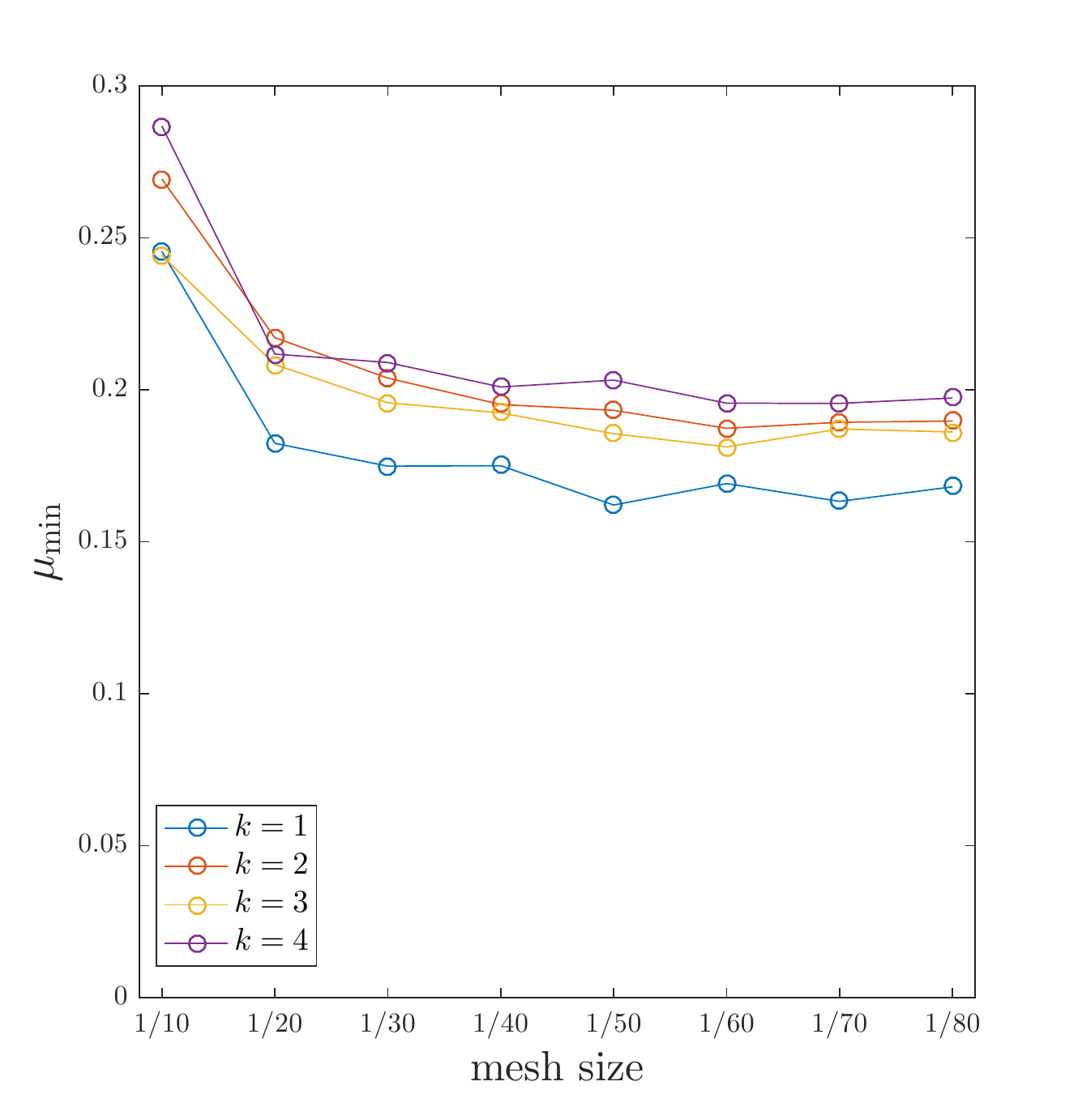}
  \caption{Inf-sup tests for $\bm V_{h}^{k}\times Q_{h}^{k+1}$ on
    triangular meshes (left) / mixed meshes (right)}
  \label{fig:m4infsuptest}
\end{figure}
\begin{figure}[htp]
  \centering
  \includegraphics[width=0.48\textwidth]{./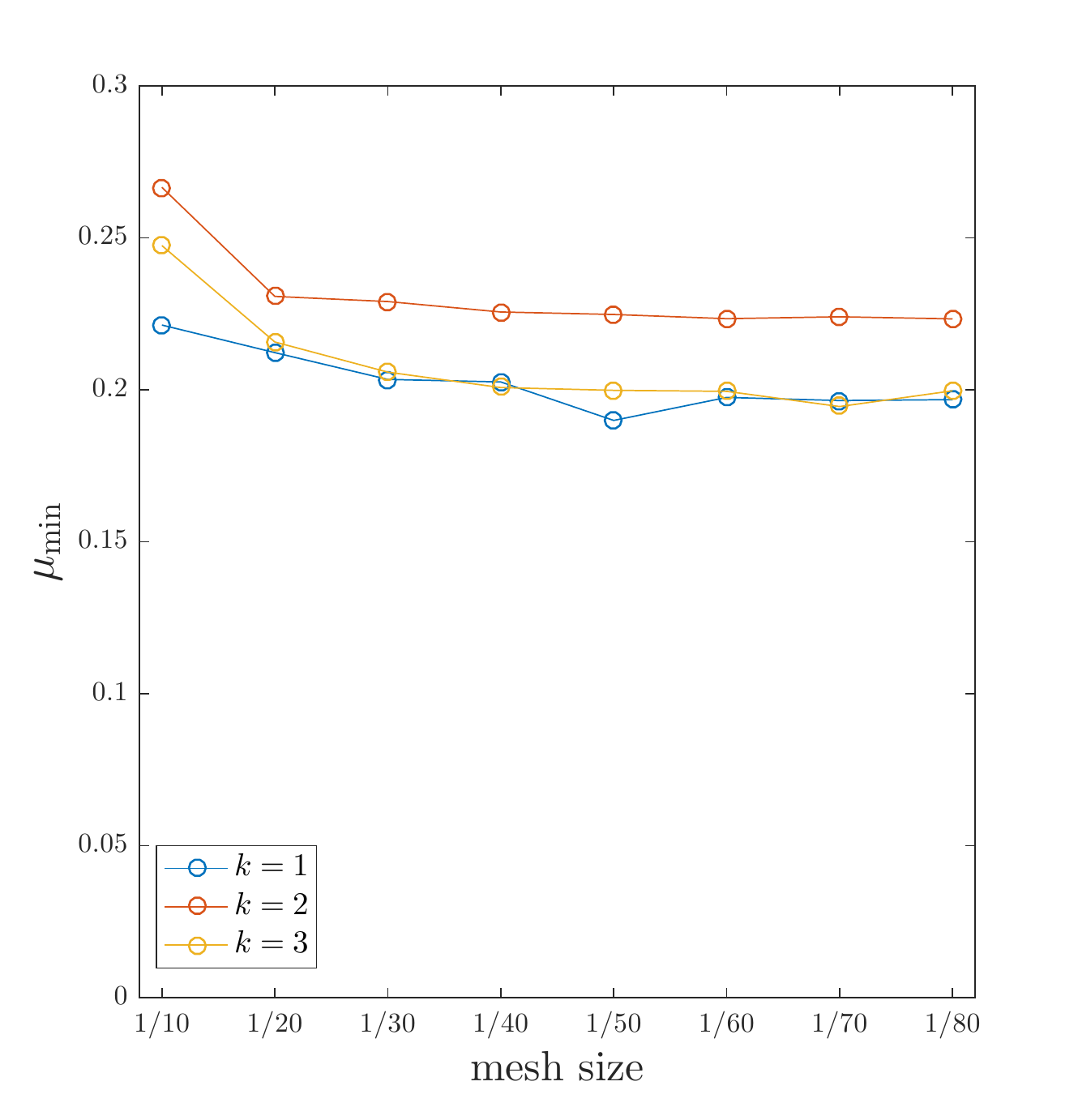}
  \includegraphics[width=0.48\textwidth]{./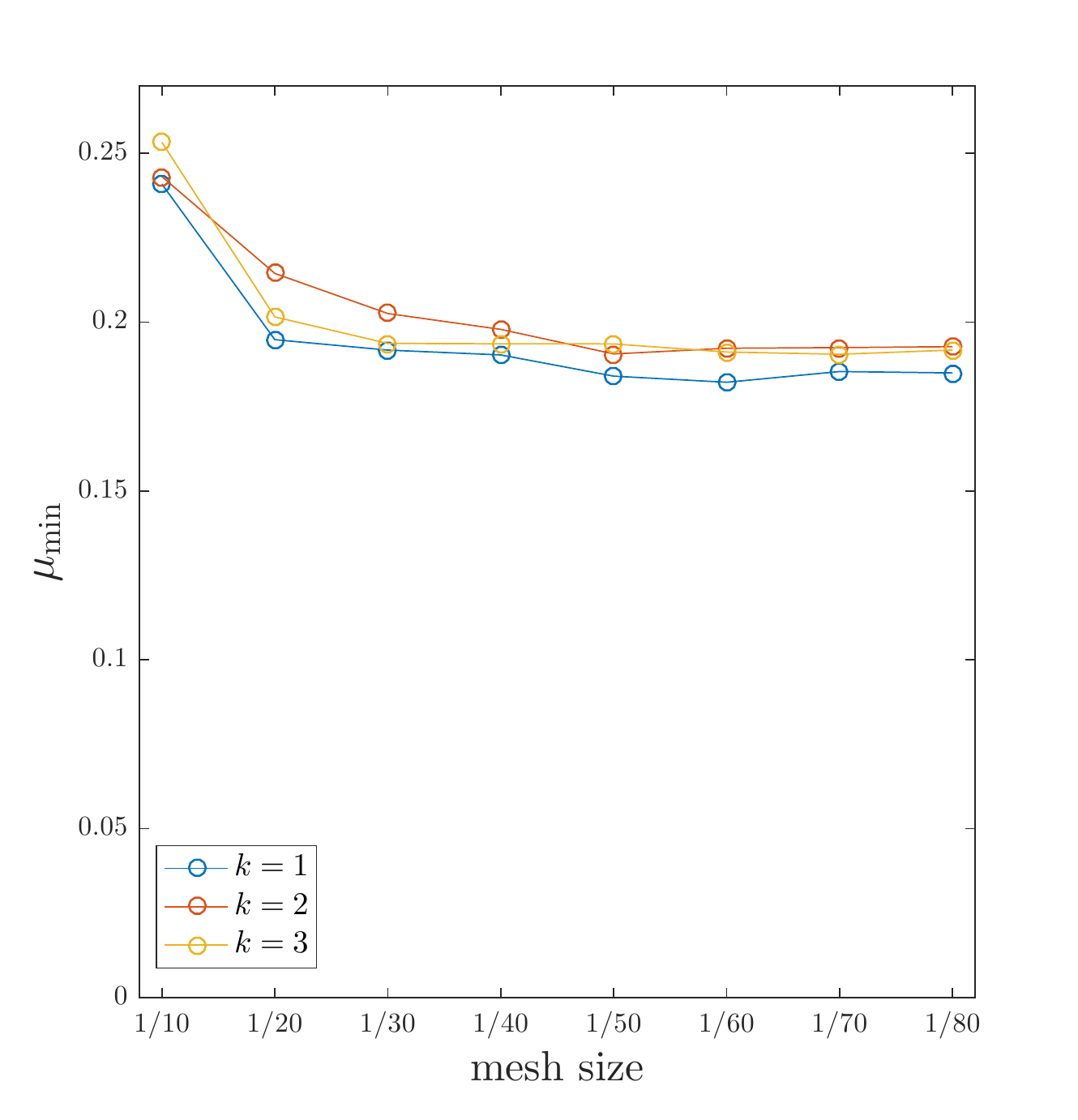}
  \caption{Inf-sup tests for $\bm V_{h}^{k}\times Q_{h}^{k+2}$ on
    triangular meshes (left) / mixed meshes (right)}
  \label{fig:m5infsuptest}
\end{figure}

\section{Numerical Results}
\label{sec:numericalresults}
In this section, we give { some implementation details and}
some numerical examples in two dimensions to verify the theoretical
error estimates in Theorem \ref{th:prioriestimate}. The numerical
settings remain unchanged as in the previous section. For the
resulting sparse system, a direct sparse solver is employed to solve
it.

{
\subsection{Implementation}
\label{sec:2dexample}
We present a 2D example on the domain $[0, 1]\times[0, 1]$ to
illustrate the implementation of our method. The key point is to
calculate the basis functions. We consider a quasi-uniform triangular
mesh, see Fig \ref{fig:2dquasi_uniform}.
\begin{figure}[htb]
  \centering
  \includegraphics[width=0.48\textwidth]{./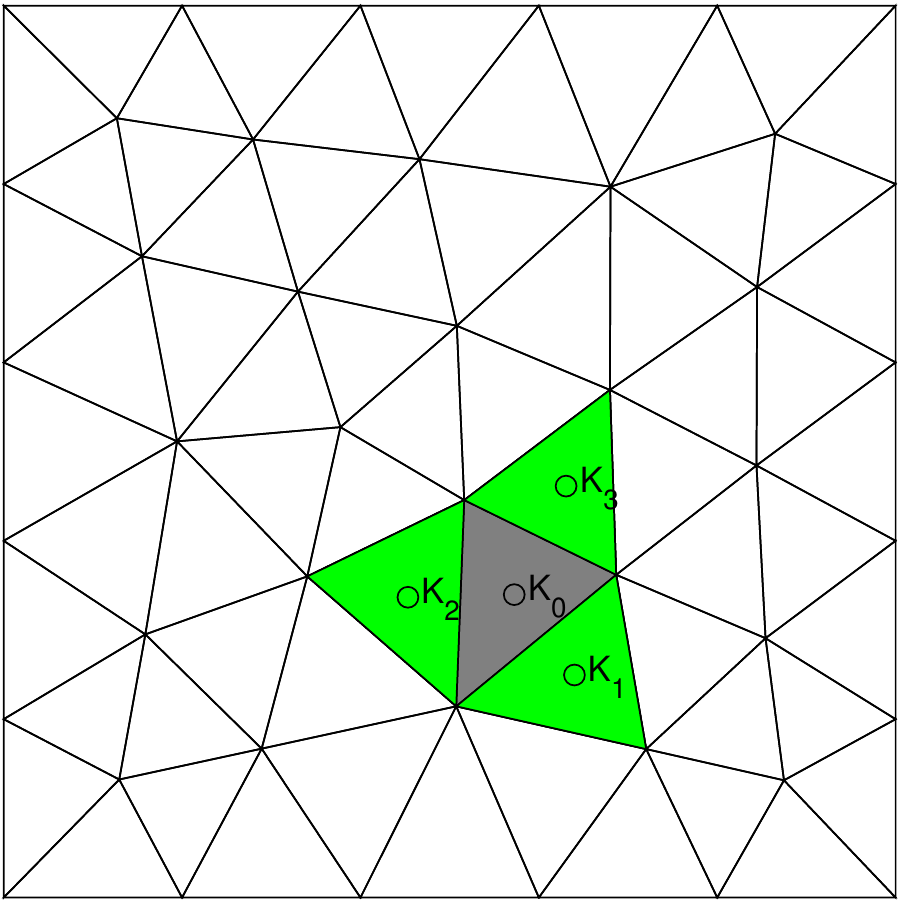}
  \includegraphics[width=0.48\textwidth, height=0.481\textwidth]{./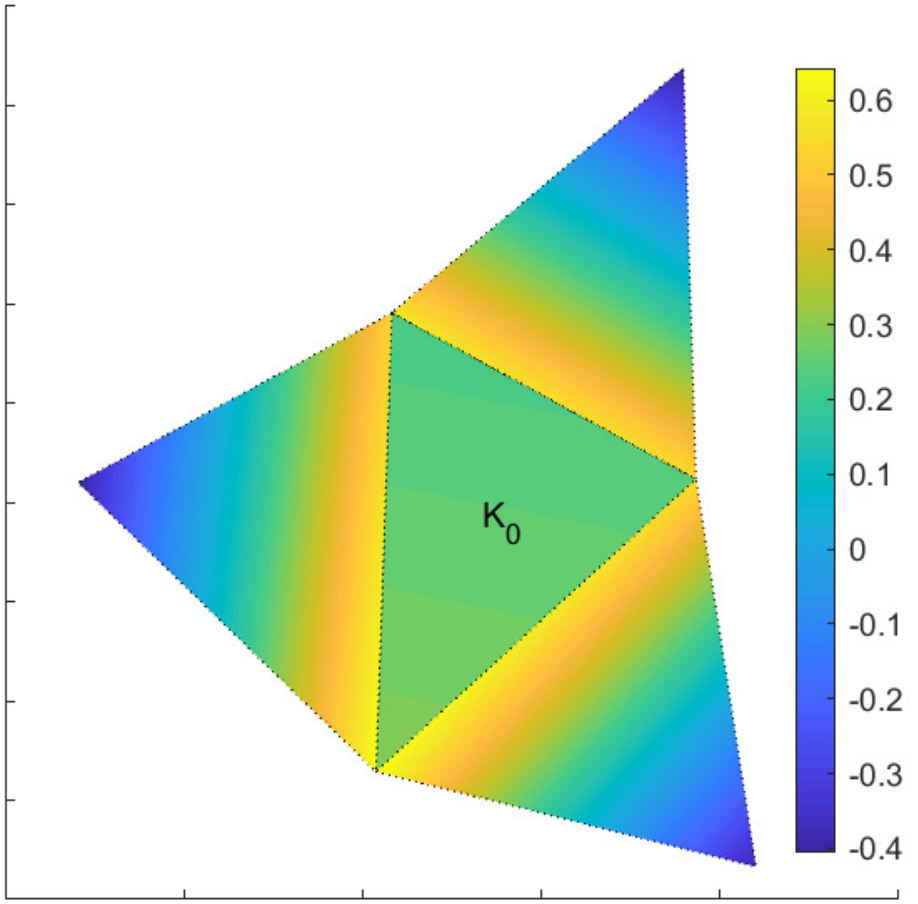}
  \caption{The triangulation and the element patch $S(K_0)$ and the
  collocation points set $\mc I_{K_0}$ (left) / the basis function
  $\lambda_{K_0}$ (right)}
  \label{fig:2dquasi_uniform}
\end{figure}
Here we consider a linear reconstruction. The barycenters of all
elements are assigned as the collocation points. For any element $K$,
we let $S(K)$ consist of $K$ itself and all edge-neighboring elements. 
Then we obtain the basis functions by solving the least squares
problem on every element.

We take $K_0$ as an example (see Fig \ref{fig:2dquasi_uniform}), the
element patch $S(K_0)$ is chosen as
\begin{displaymath}
  S(K_0) = \left\{ K_{0}, K_{1}, K_{2},K_{3} \right\},
\end{displaymath}
and the corresponding collocation points are
\begin{displaymath}
  \mc I_{K_0} = \left\{ (x_{K_{0}}, y_{K_{0}}),(x_{K_{1}},
  y_{K_{1}}),(x_{K_{2}}, y_{K_{2}}),(x_{K_{3}}, y_{K_{3}}) \right\},
\end{displaymath}
where $(x_{K_i}, y_{K_i})$ is the barycenter of $K_i$.

For a continuous function $g$, the least squares problem is
\begin{displaymath}
  \mc R_{K_0} = \mathop{\arg \min}_{ (a, b, c) \in \mathbb R}
  \sum_{(x_{K'},y_{K'}) \in \mc{I}_{K_0}} |g(x_{K'},y_{K'}) -
   (a + bx_{K'} + cy_{K'})|^2.
\end{displaymath}
By the Assumption 1, we obtain the unique solution
\begin{displaymath}
  [a, b, c]^T = (A^TA)^{-1}A^Tq,
\end{displaymath}
where
\begin{displaymath}
  A = \begin{bmatrix} 1 & x_{K_{0}}& y_{K_{0}} \\ 1 & x_{K_{1}}&
    y_{K_{1}} \\ 1 & x_{K_{2}} & y_{K_{2}}\\ 1 & x_{K_{3}} & y_{K_{3}}
  \end{bmatrix}, \quad 
  q = \begin{bmatrix} g(x_{K_{0}},y_{K_{0}}) \\g(x_{K_{1}},y_{K_{1}})
    \\g(x_{K_{2}},y_{K_{2}}) \\g(x_{K_{3}},y_{K_{3}})
  \end{bmatrix}.
\end{displaymath}
Thus the matrix $(A^TA)^{-1}A^T$ contains all necessary information of
the basis functions $\lambda_{K_0}, \lambda_{K_1}, \lambda_{K_2},
\lambda_{K_3}$ on $K_0$ and we just store it to represent the basis
functions. All the basis functions could be obtained by solving the
least squares problem on every element. Besides, the basis function
$\lambda_{K_0}$ is presented in Fig \ref{fig:2dquasi_uniform} and we
shall point out that the support of the basis function is not always
equal to the element patch, and vice versa. 
}

\subsection{2D smooth problem}
We first consider a 2D example on $\Omega=[0,1]^2$ with smooth
analytical solution to investigate the convergence properties. The
exact solution is taken as
\begin{displaymath}
  \bm u(x,y)=\begin{bmatrix} \sin(2\pi x)\cos(2\pi y) \\ -\cos(2\pi
  x)\sin(2\pi y) \\
  \end{bmatrix}, \quad
  p(x,y)=x^2+y^2,
\end{displaymath}
and the source term $\bm f$ and the boundary condition $\bm g$ are
chosen accordingly. We consider three methods in Section
\ref{sec:infsuptest} and solve the Stokes problem on the given
triangular meshes and mixed meshes, respectively, with mesh size
$h=\frac1n, n=10, 20, 40, 80$.

In Fig \ref{fig:m1L2error} and Fig \ref{fig:m1DGerror}, we present the
$L^2$ norm and the DG energy norm of the error in the approximation to
the exact velocity on both meshes when using method \br 1. And Fig
\ref{fig:m1Pressureerror} shows the pressure error in $L^2$ norm. Here
we observe that the optimal convergence rates for $\|\bm u - \bm
u_h\|_{L^2( \Omega)}$, $\|\bm u - \bm u_h\|_{\mathrm{DG}}$ and $\|p -
p_h\|_{L^2(\Omega)}$ are obtained, which are $O(h^{k+1})$, $O(h^{k})$
and $O(h^k)$, respectively. The numerical results confirm the estimate
\eqref{eq:prioriestimate}.

\begin{figure}
  \centering
  \includegraphics[width=0.48\textwidth]{./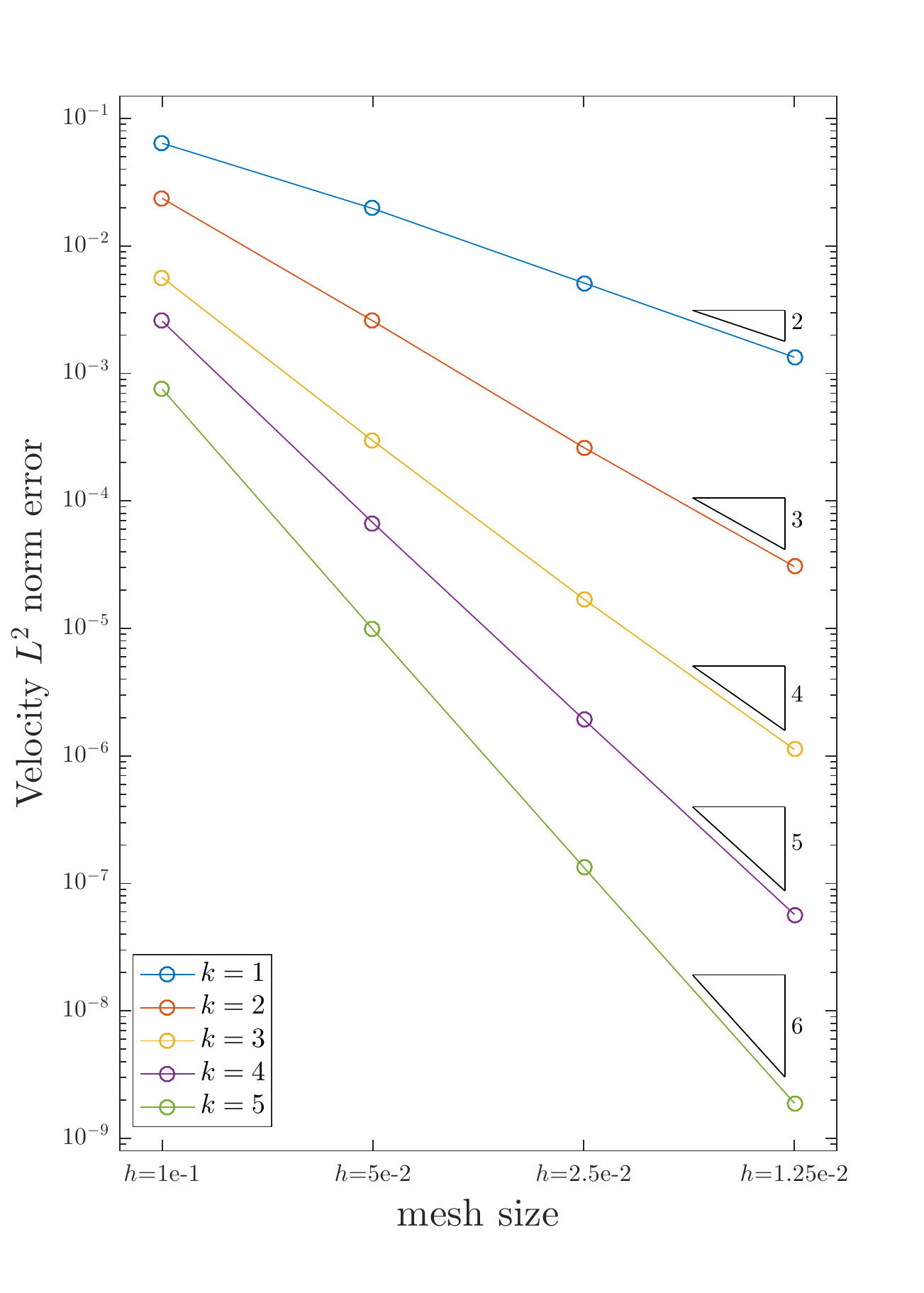}
  \includegraphics[width=0.48\textwidth]{./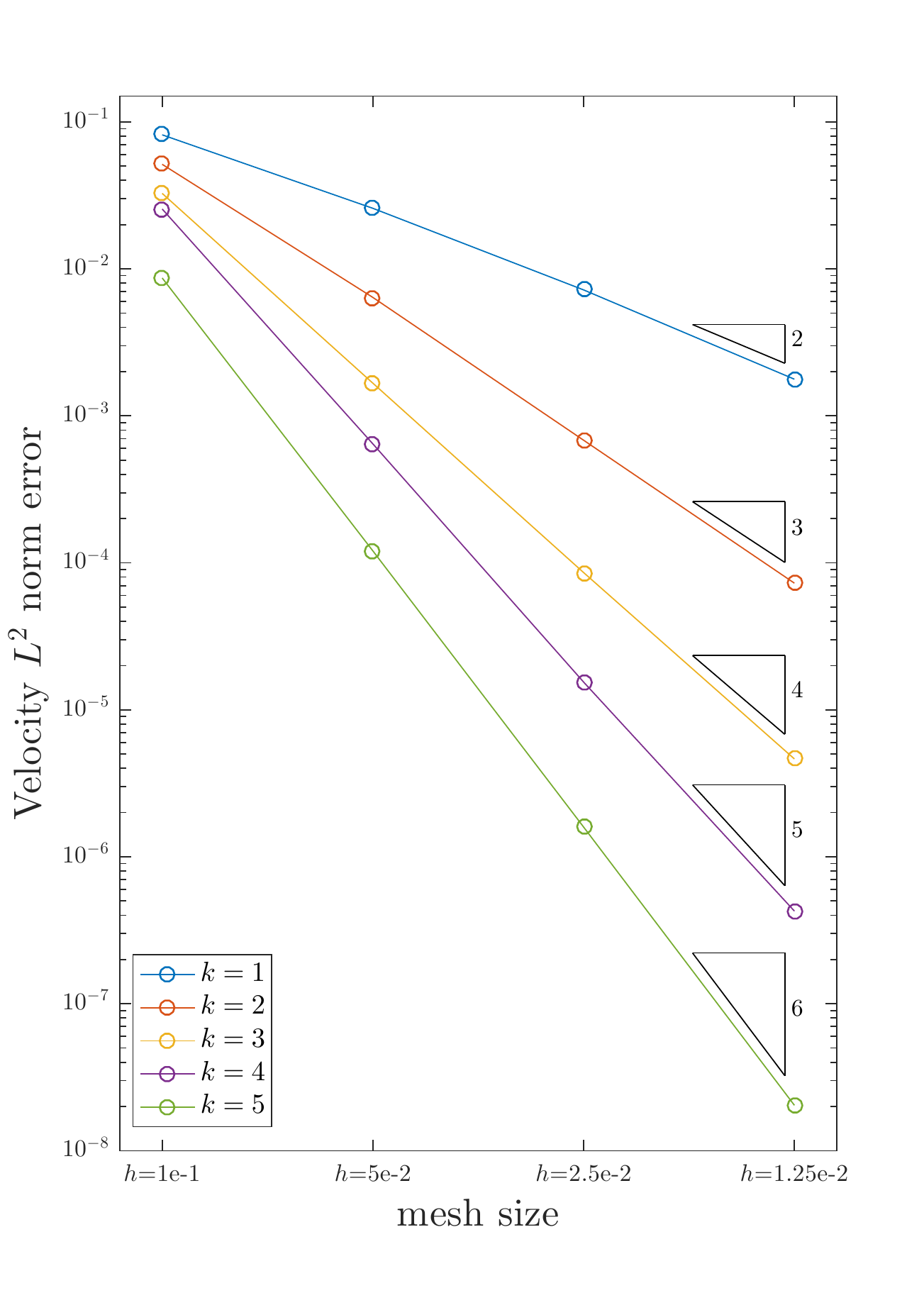}
  \caption{Velocity $L^2$ norm error with method \br 1 for the smooth
    case on triangular meshes (left) / mixed meshes (right)}
  \label{fig:m1L2error}
\end{figure}
\begin{figure}
  \centering
  \includegraphics[width=0.48\textwidth]{./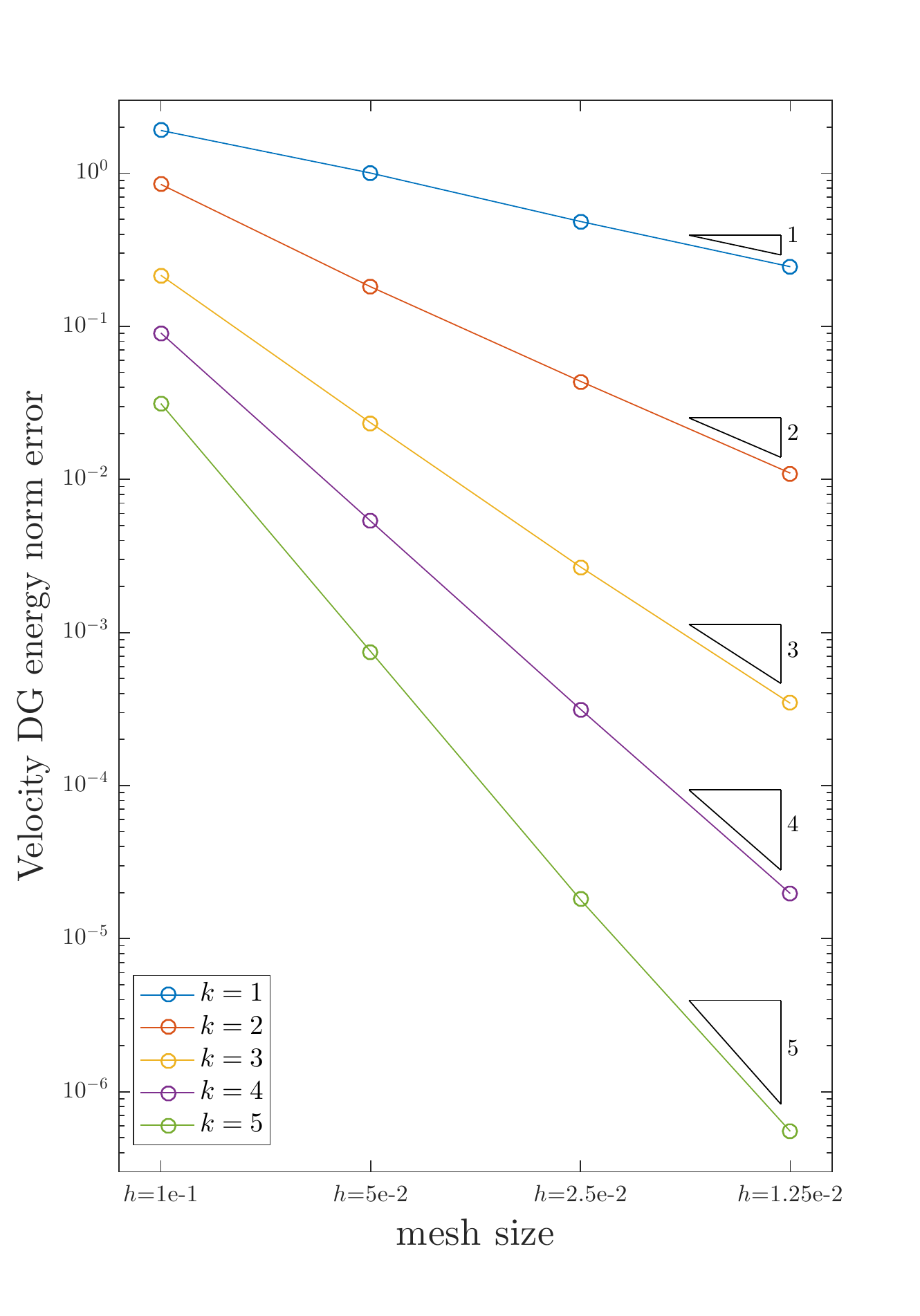}
  \includegraphics[width=0.48\textwidth]{./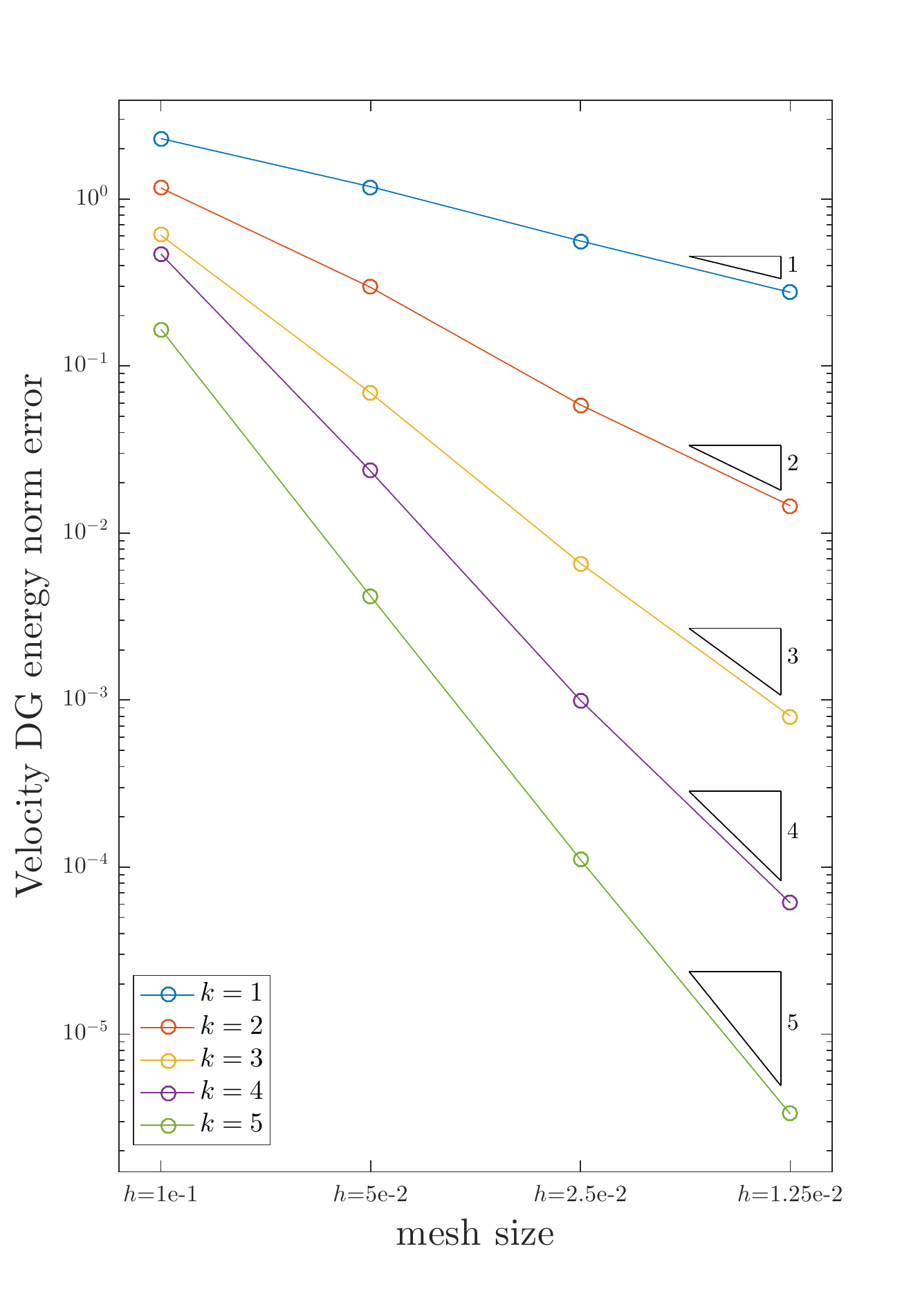}
  \caption{Velocity DG energy norm error with method \br 1 for the
    smooth case on triangular meshes (left) / mixed meshes (right)}
  \label{fig:m1DGerror}
\end{figure}
\begin{figure}
  \centering
  \includegraphics[width=0.48\textwidth]{./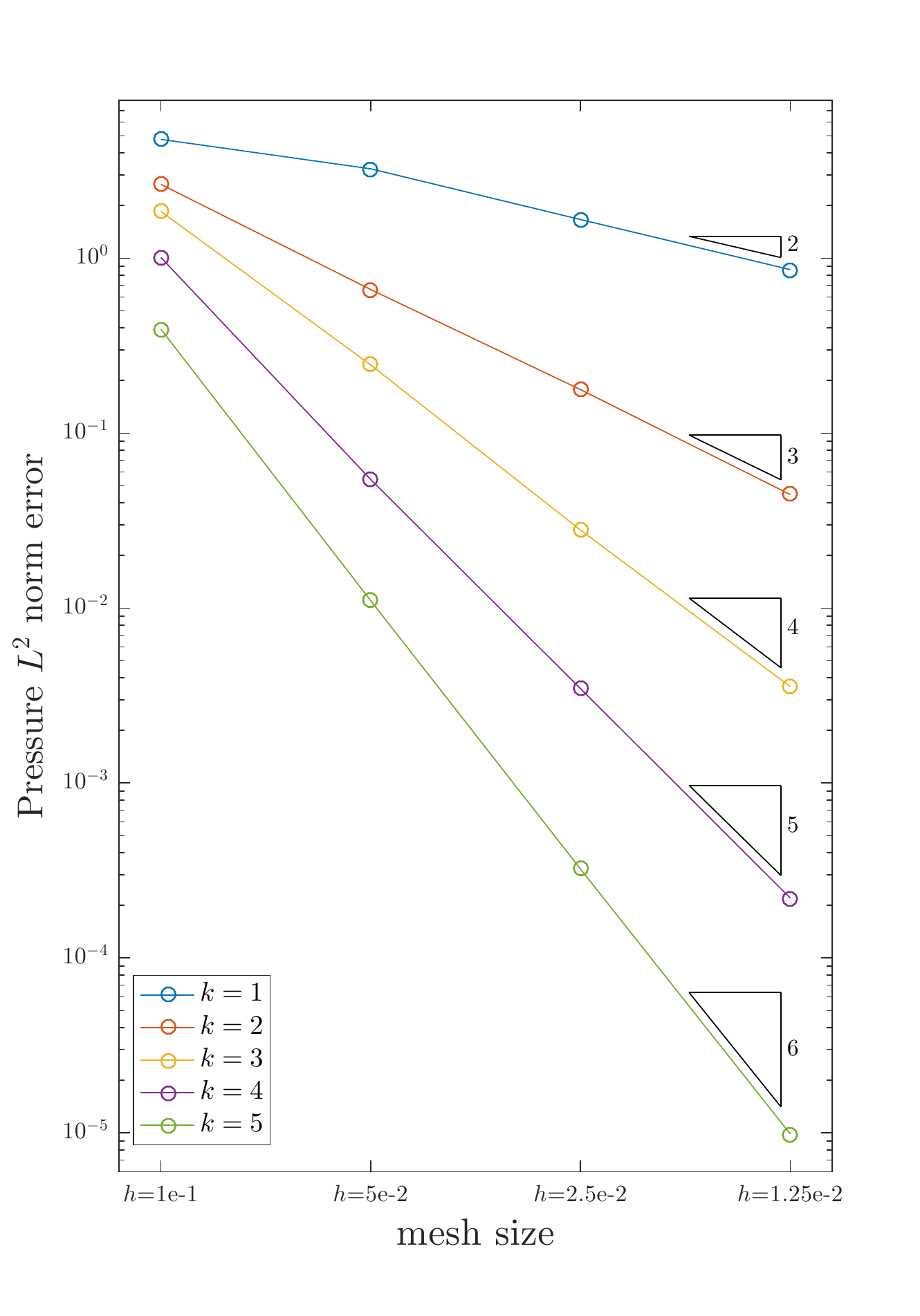}
  \includegraphics[width=0.48\textwidth]{./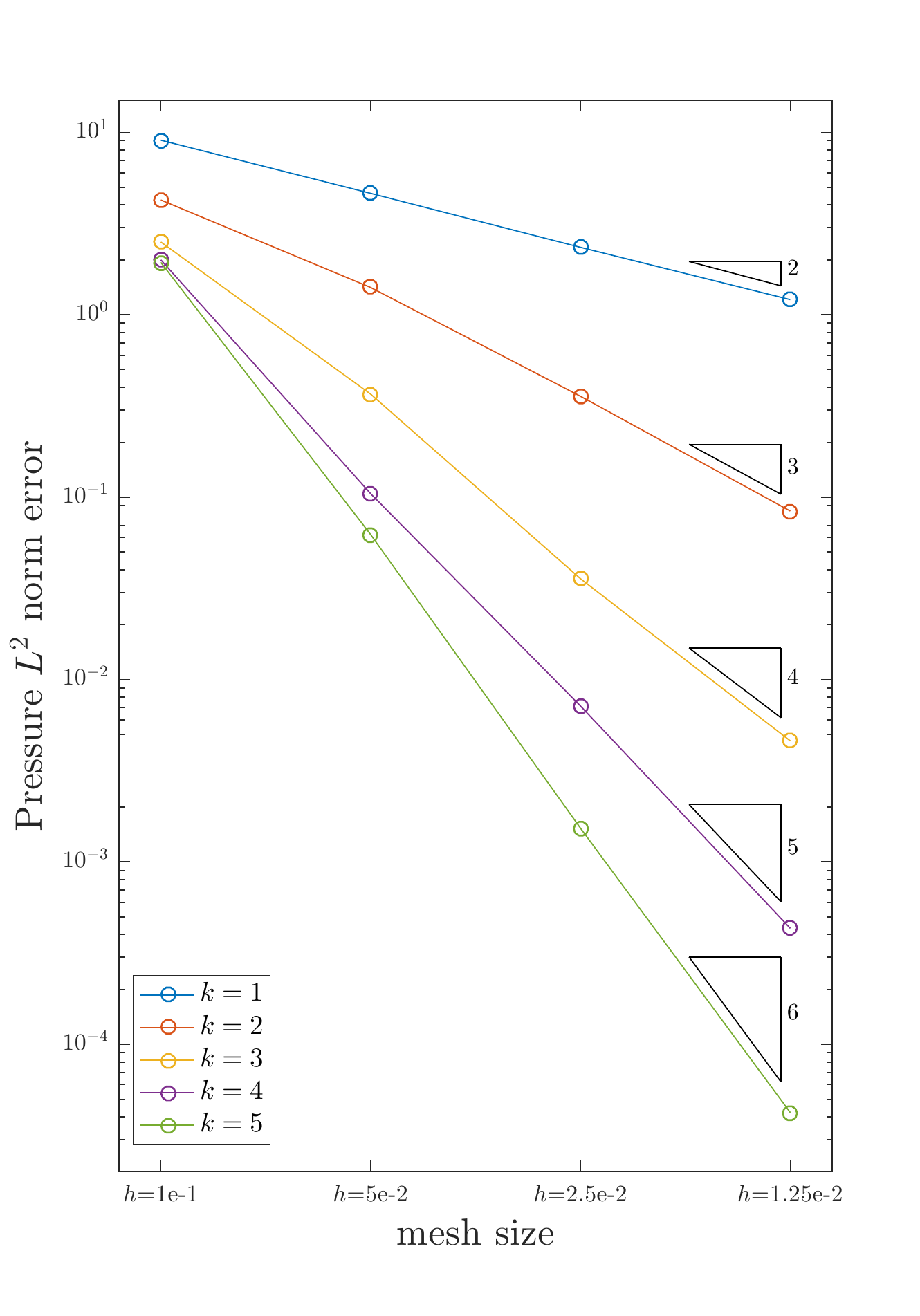}
  \caption{Pressure $L^2$ norm error with method \br 1 for the smooth
    case on triangular meshes (left) / mixed meshes (right)}
  \label{fig:m1Pressureerror}
\end{figure}

Now we consider the method \br 2, the convergence rates are displayed
in Fig \ref{fig:m2L2error}, \ref{fig:m2DGerror} and
\ref{fig:m2Pressureerror}. All convergence orders are identical to the
results in method \br{1}, which agrees with the developed theory. For
this method, the approximation to the pressure converges in a
suboptimal way, but we build the approximation space only once which
makes the method \br{2} more effective.

\begin{figure}
  \centering
  \includegraphics[width=0.48\textwidth]{./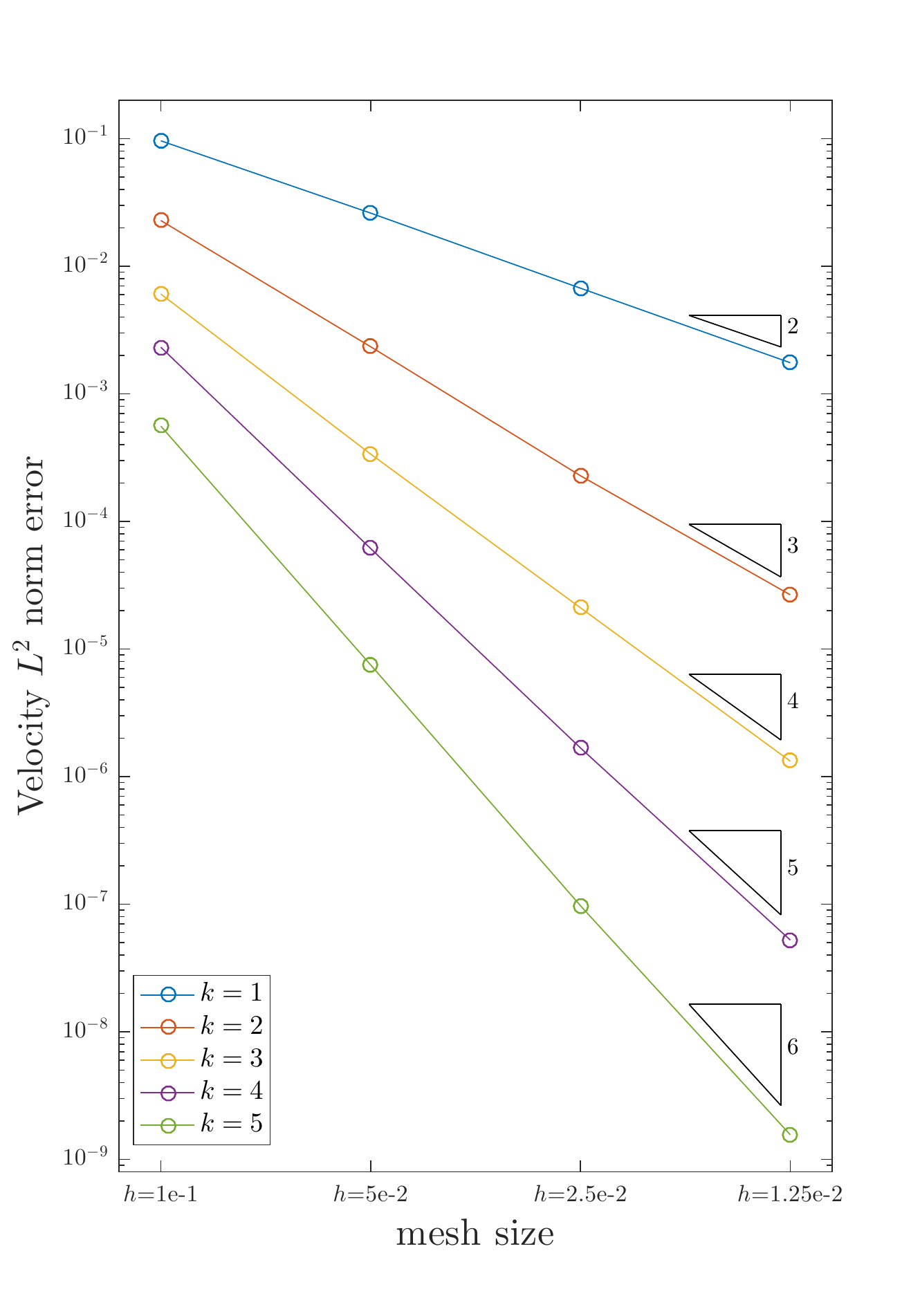}
  \includegraphics[width=0.48\textwidth]{./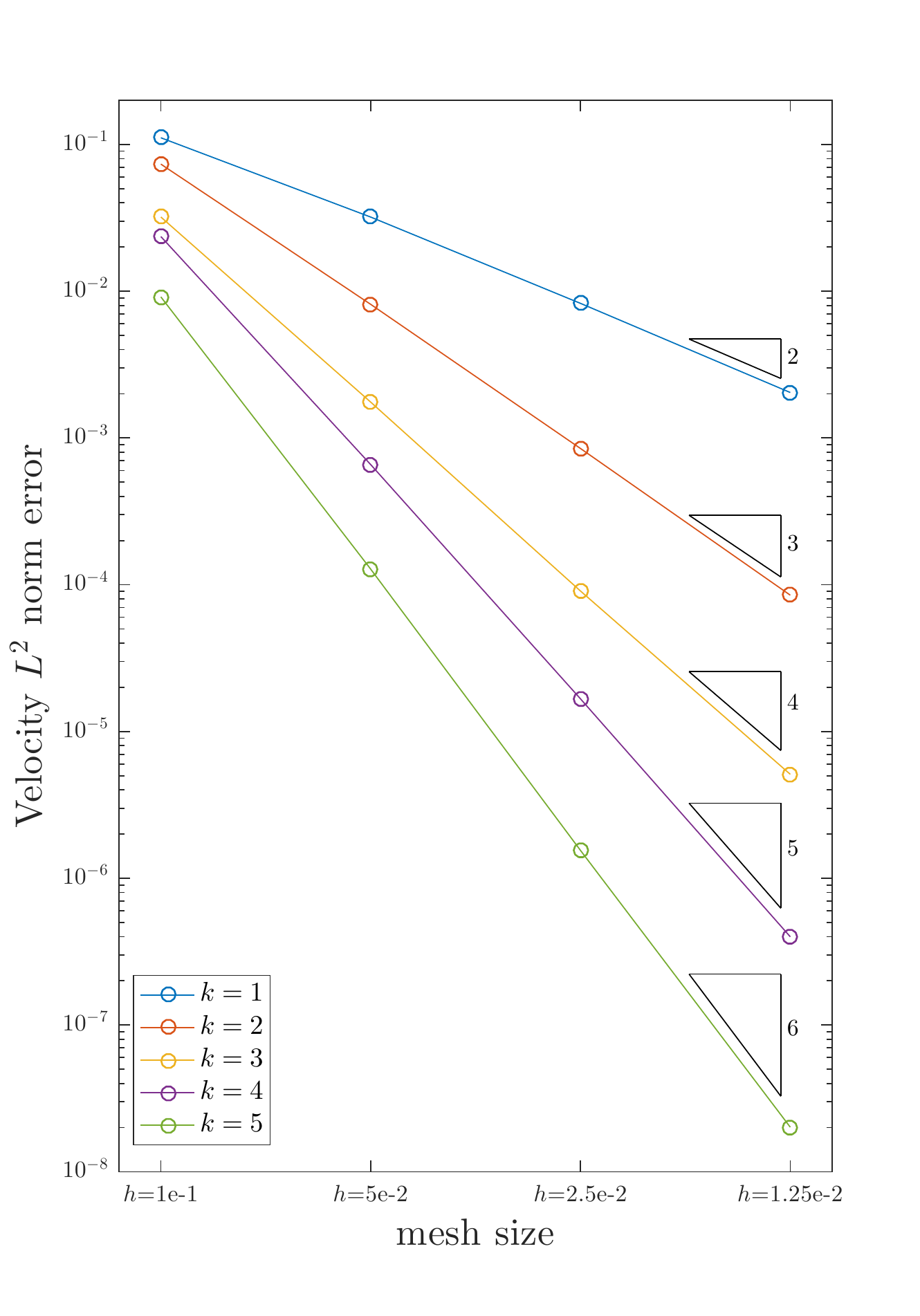}
  \caption{Velocity $L^2$ norm error with method \br 2 for the smooth
    case on triangular meshes(left) /mixed meshes(right)}
  \label{fig:m2L2error}
\end{figure}
\begin{figure}
  \centering
  \includegraphics[width=0.48\textwidth]{./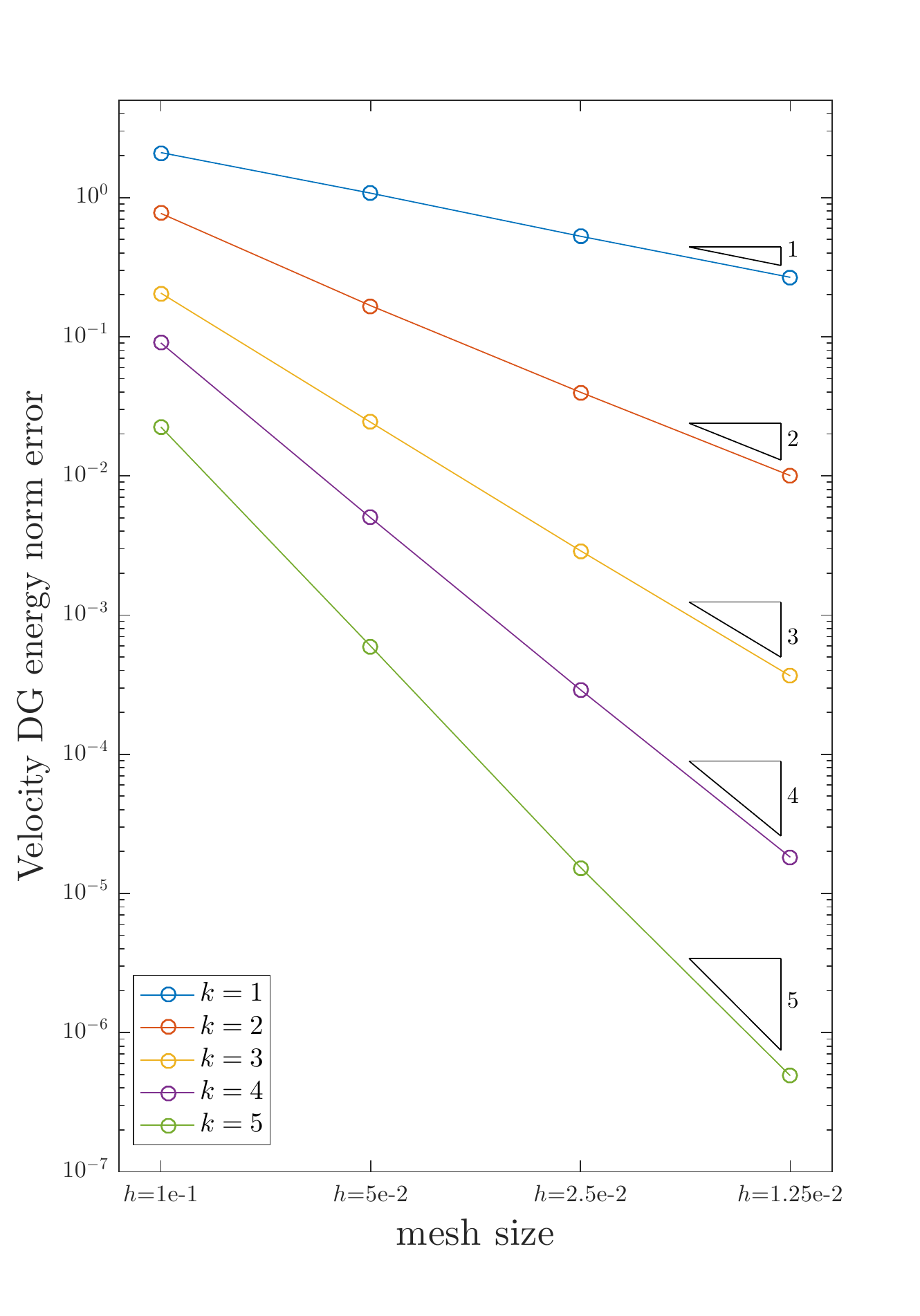}
  \includegraphics[width=0.48\textwidth]{./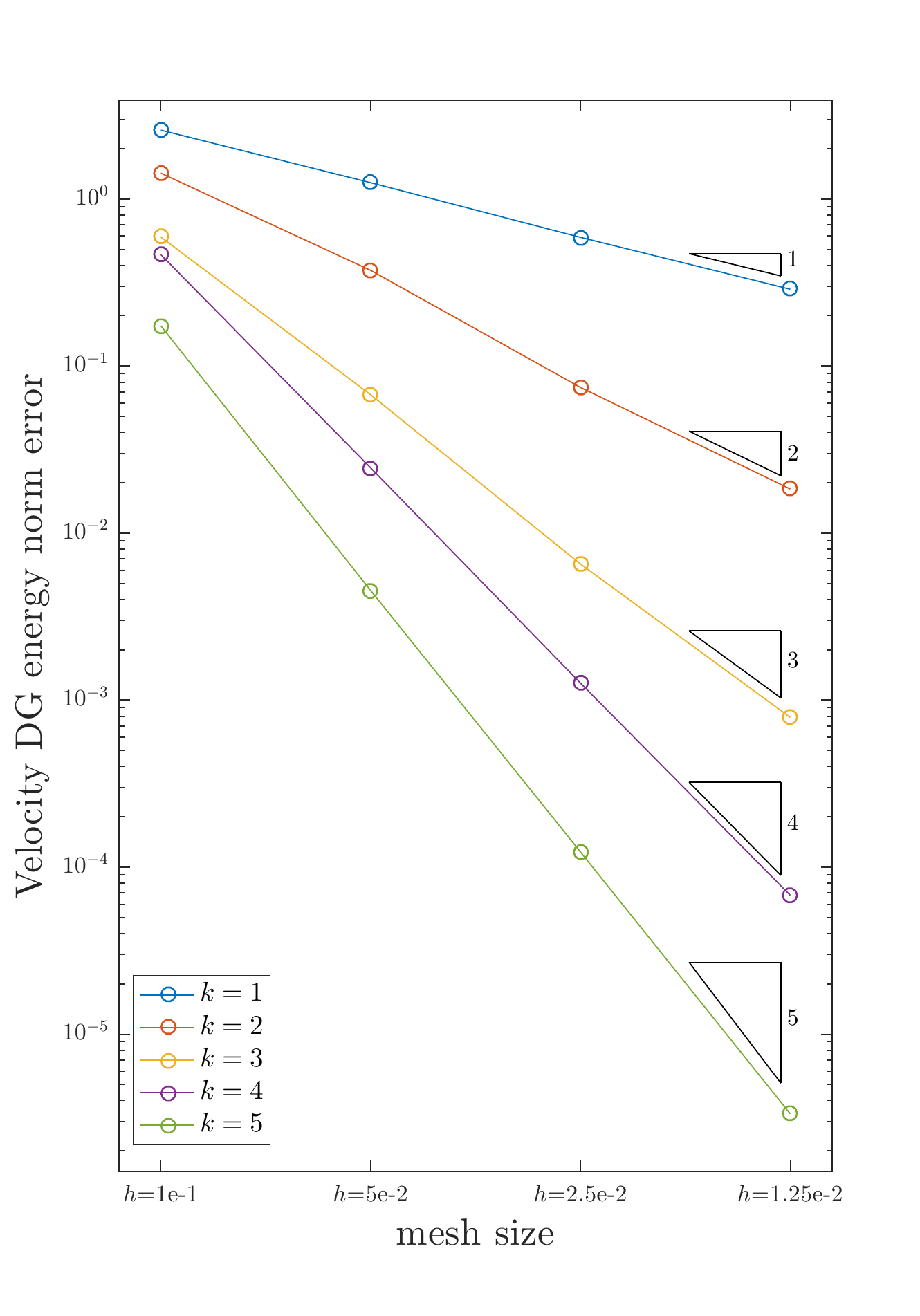}
  \caption{Velocity DG energy norm error with method \br 2 for the
    smooth case on triangular meshes(left)/mixed meshes(right)}
  \label{fig:m2DGerror}
\end{figure}
\begin{figure}
  \centering
  \includegraphics[width=0.48\textwidth]{./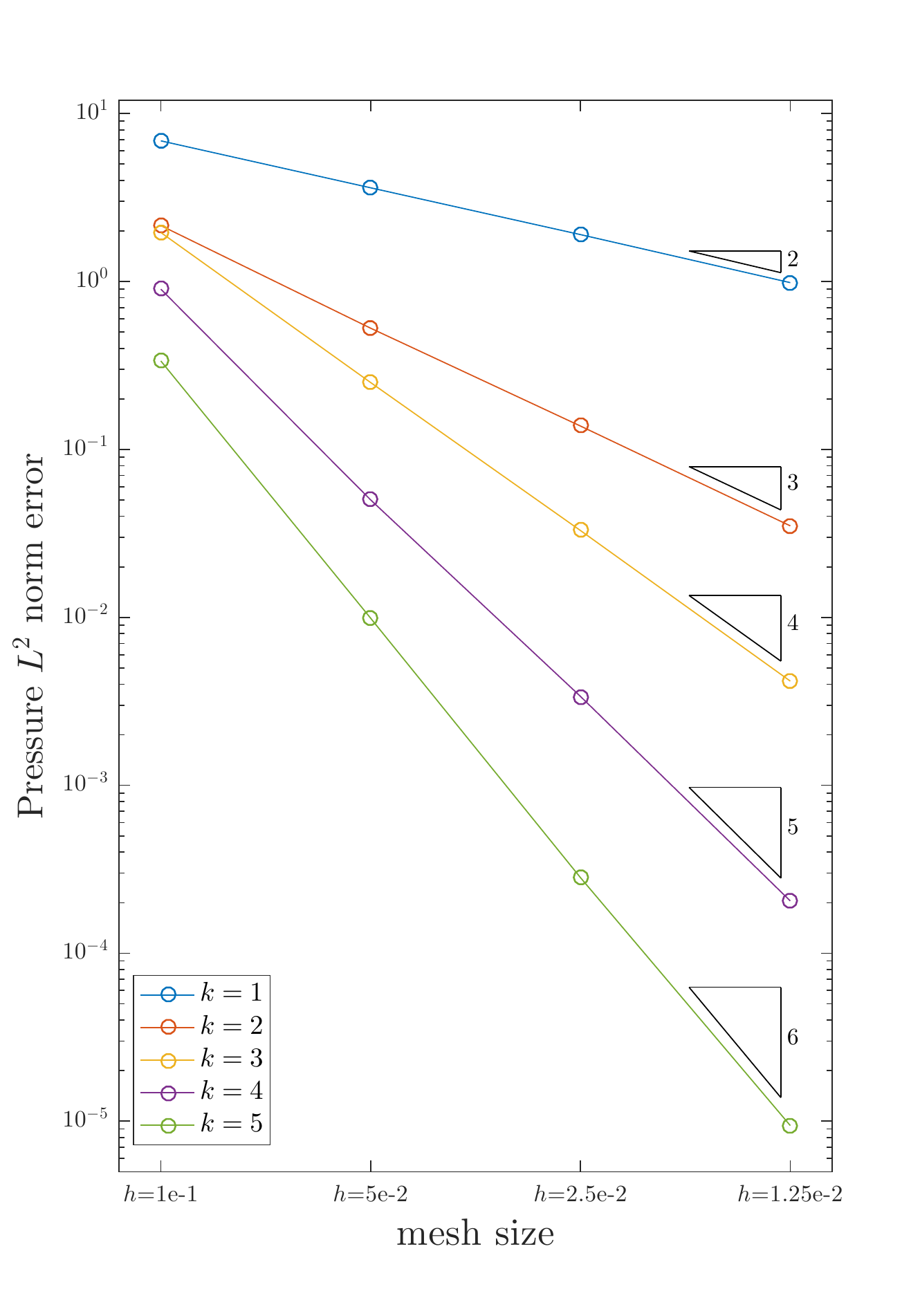}
  \includegraphics[width=0.48\textwidth]{./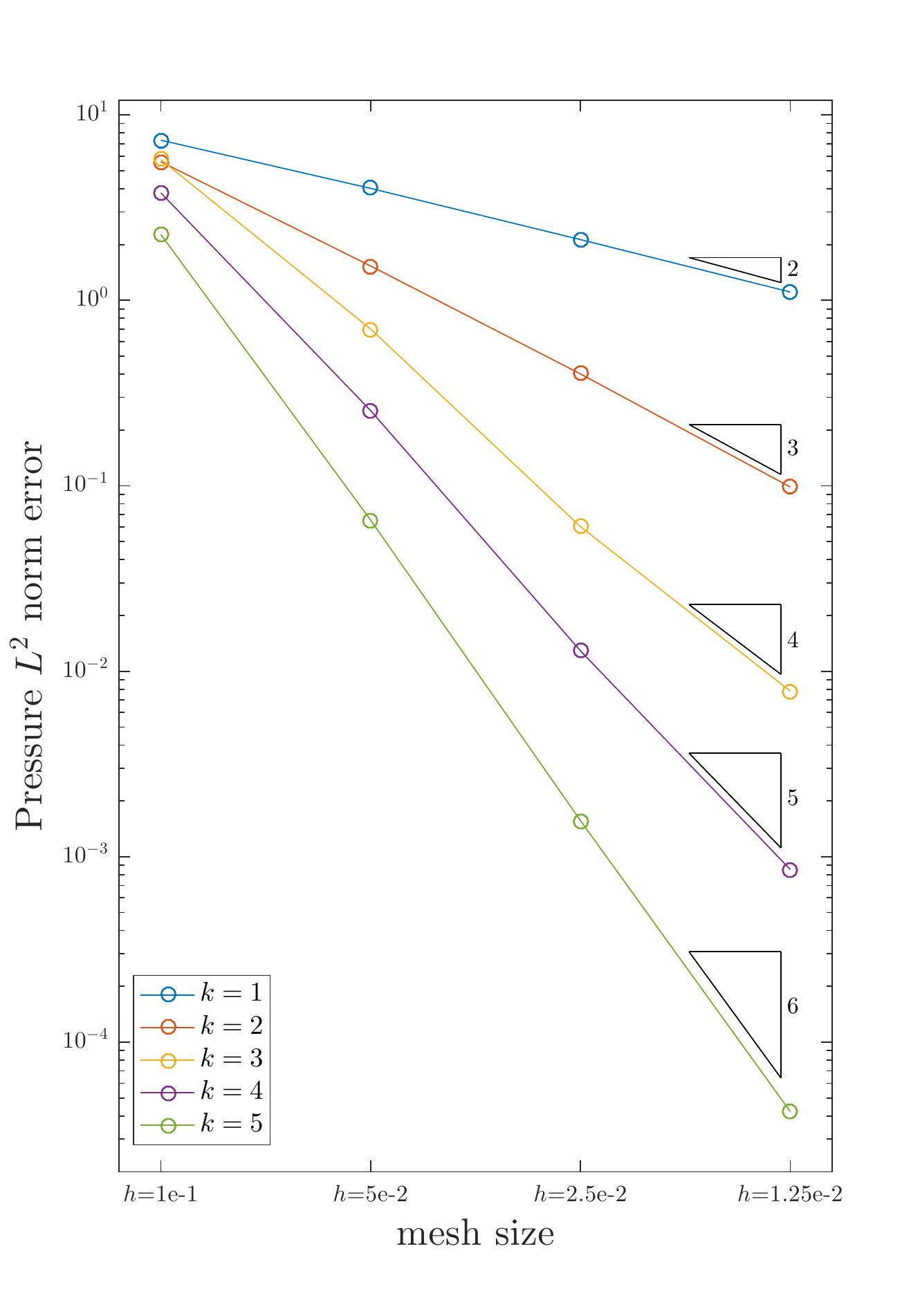}
  \caption{Pressure $L^2$ norm error with method \br 2 for the smooth
    case on triangular meshes (left)/mixed meshes(right)}
  \label{fig:m2Pressureerror}
\end{figure}

Finally, we investigate the numerical performance of the method \br 3.
The errors are plotted in Fig \ref{fig:m3L2error}, \ref{fig:m3DGerror}
and \ref{fig:m3Pressureerror}. Here the theoretical convergence rates
under norm $\|\bm u - \bm u_h\|_{\mathrm{DG}}$ and
$\|p-p_h\|_{L^2(\Omega)}$ are $O(h^1)$. We observe that the numerical
results do not coincide with the theory exactly which results from the
numerical error in approximation to pressure is much larger than the
interpolation error. The super convergence is spurious and the
convergence orders will drop to the expected values as the mesh size
$h$ approaches to zero. But it does not imply that the high order is
not preferred in method \br{3}, {the results show that
  using $\bm V_{h}^{k}$ with larger $k$ could give a more accurate
  approximation to the velocity and the pressure.}

\begin{figure}
  \centering
  \includegraphics[width=0.48\textwidth]{./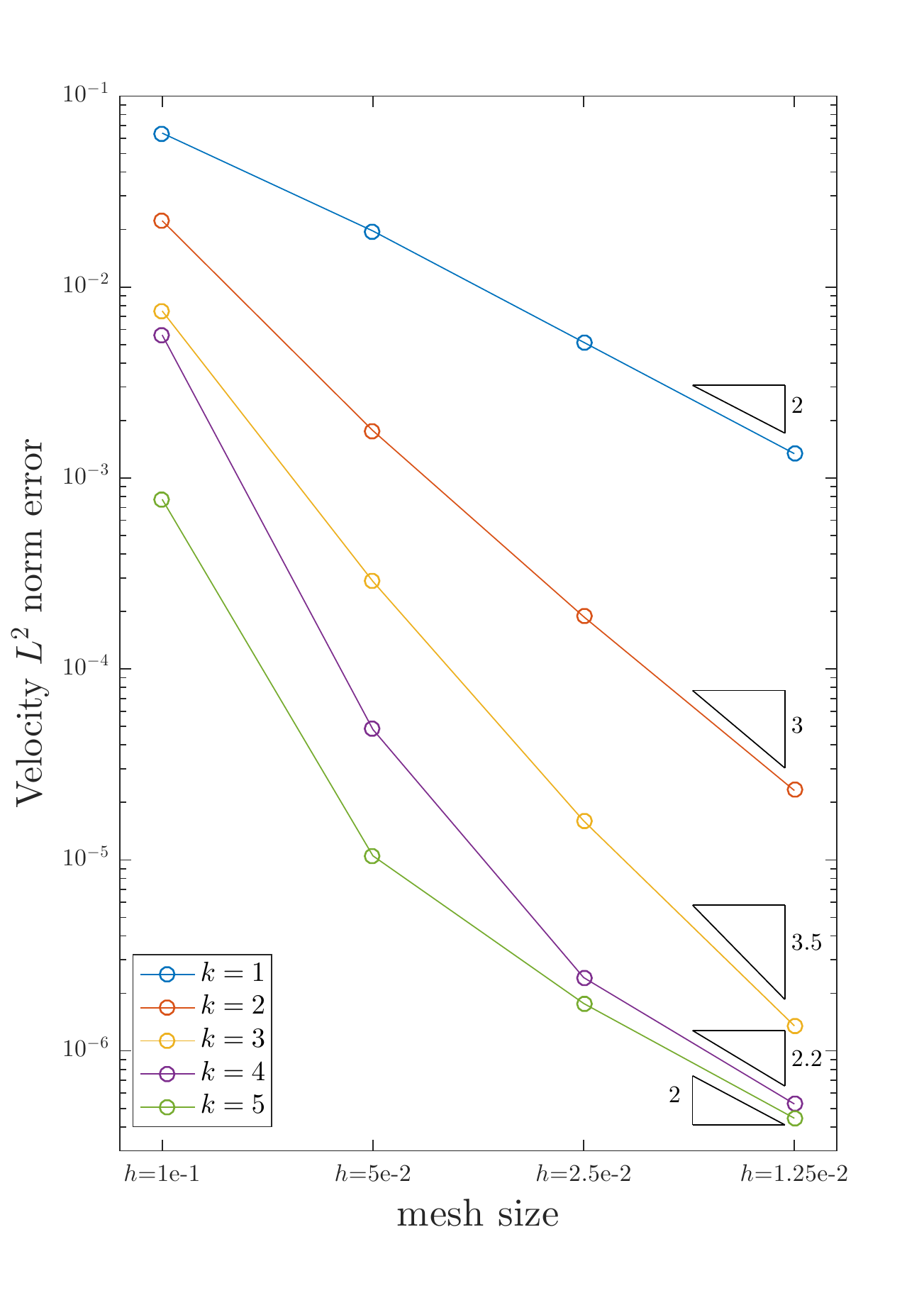}
  \includegraphics[width=0.48\textwidth]{./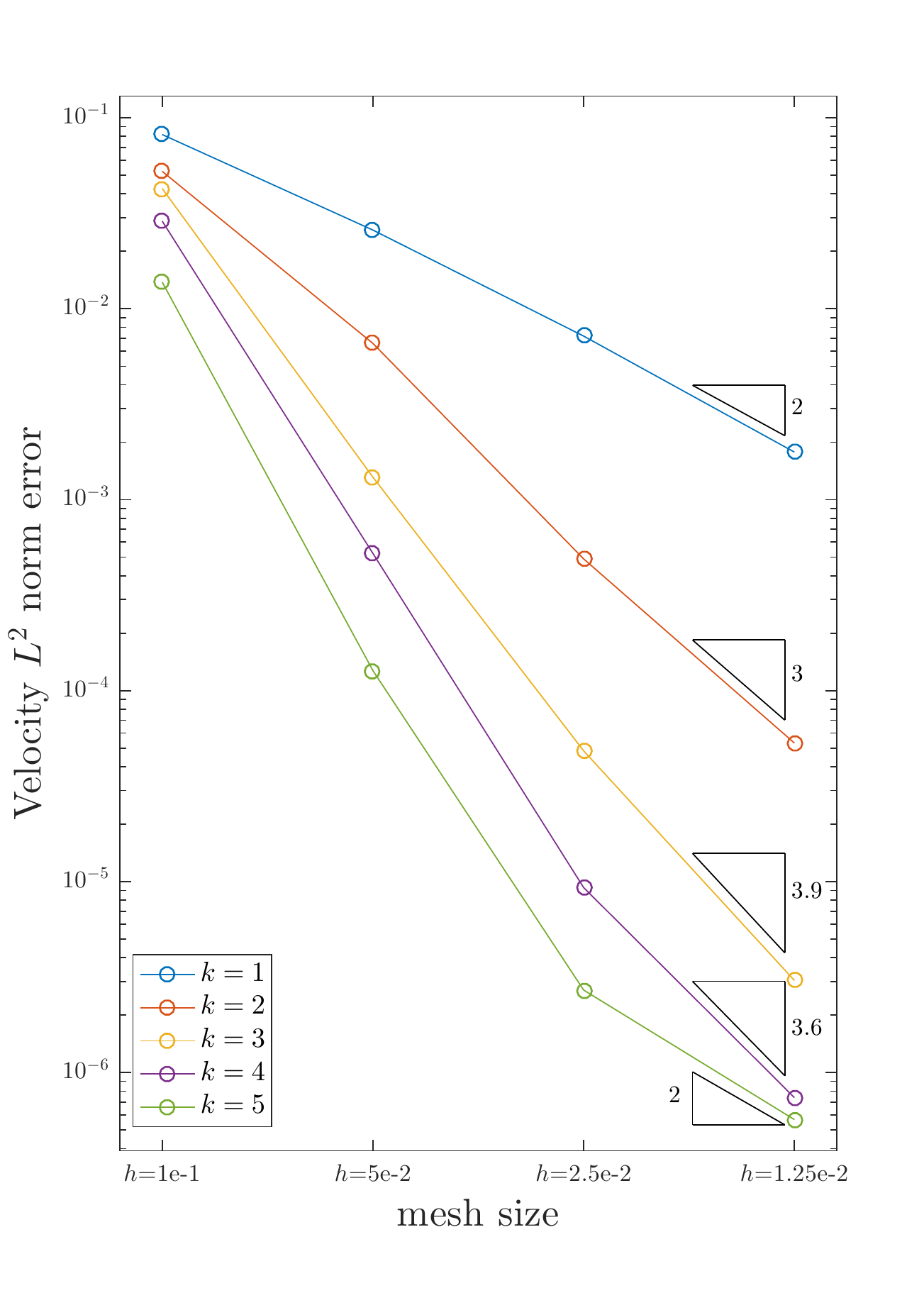}
  \caption{Velocity $L^2$ norm error on with method \br 3 for the
    smooth case triangular meshes(left)/mixed meshes(right)}
  \label{fig:m3L2error}
\end{figure}
\begin{figure}
  \centering
  \includegraphics[width=0.48\textwidth]{./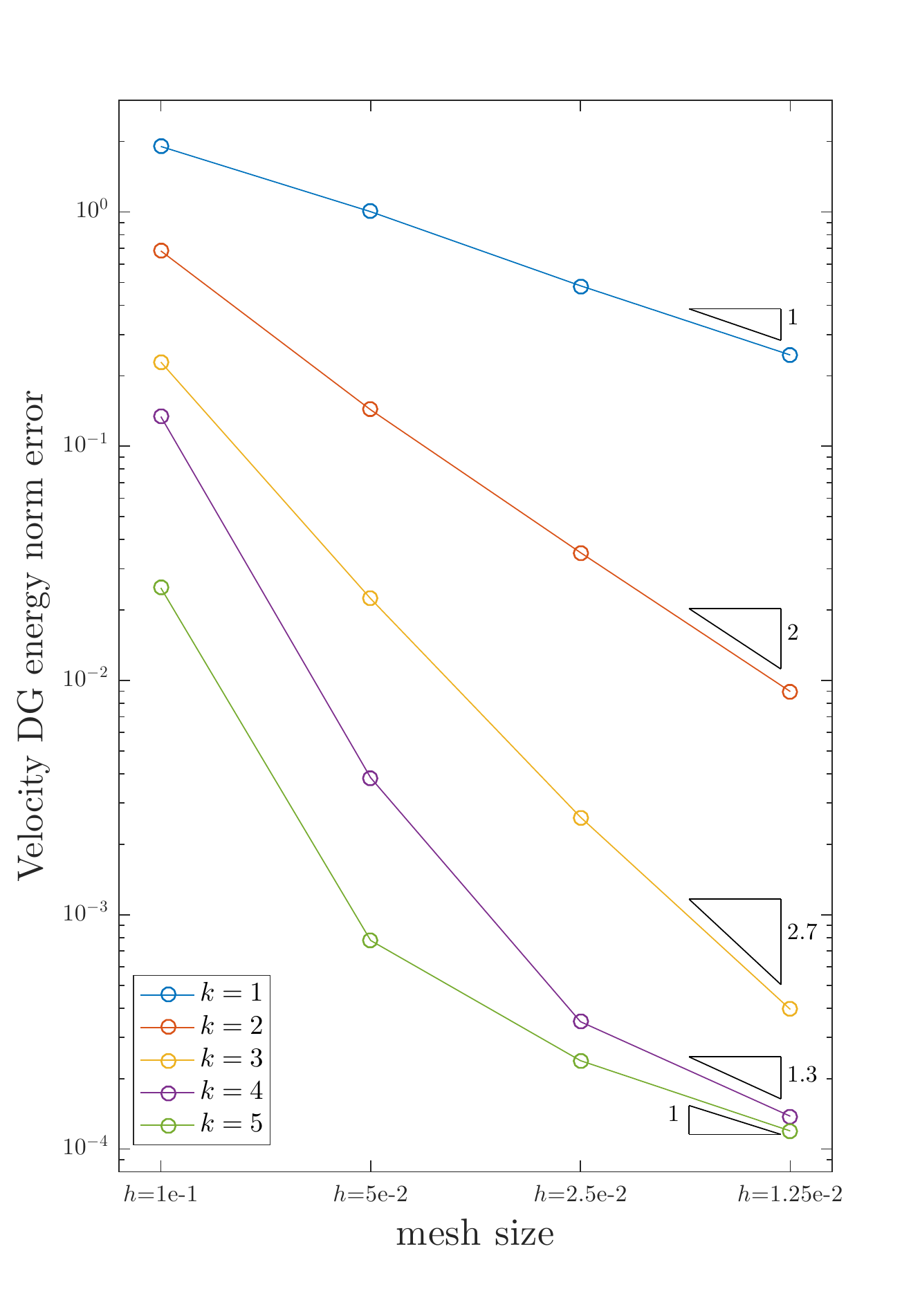}
  \includegraphics[width=0.48\textwidth]{./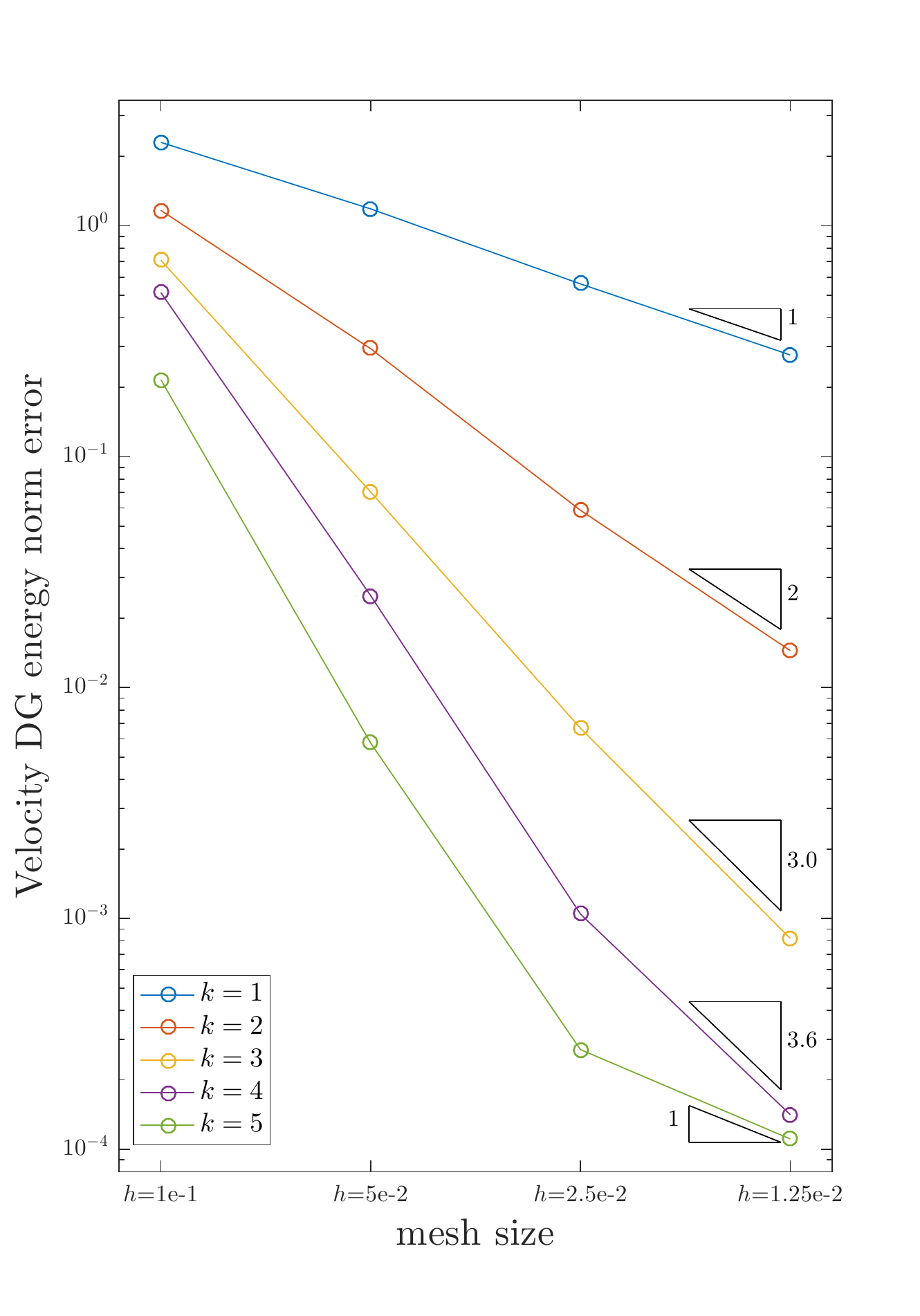}
  \caption{Velocity DG energy norm error with method \br 3 for the
    smooth case on triangular meshes(left)/mixed meshes(right)}
  \label{fig:m3DGerror}
\end{figure}
\begin{figure}
  \centering
  \includegraphics[width=0.48\textwidth]{./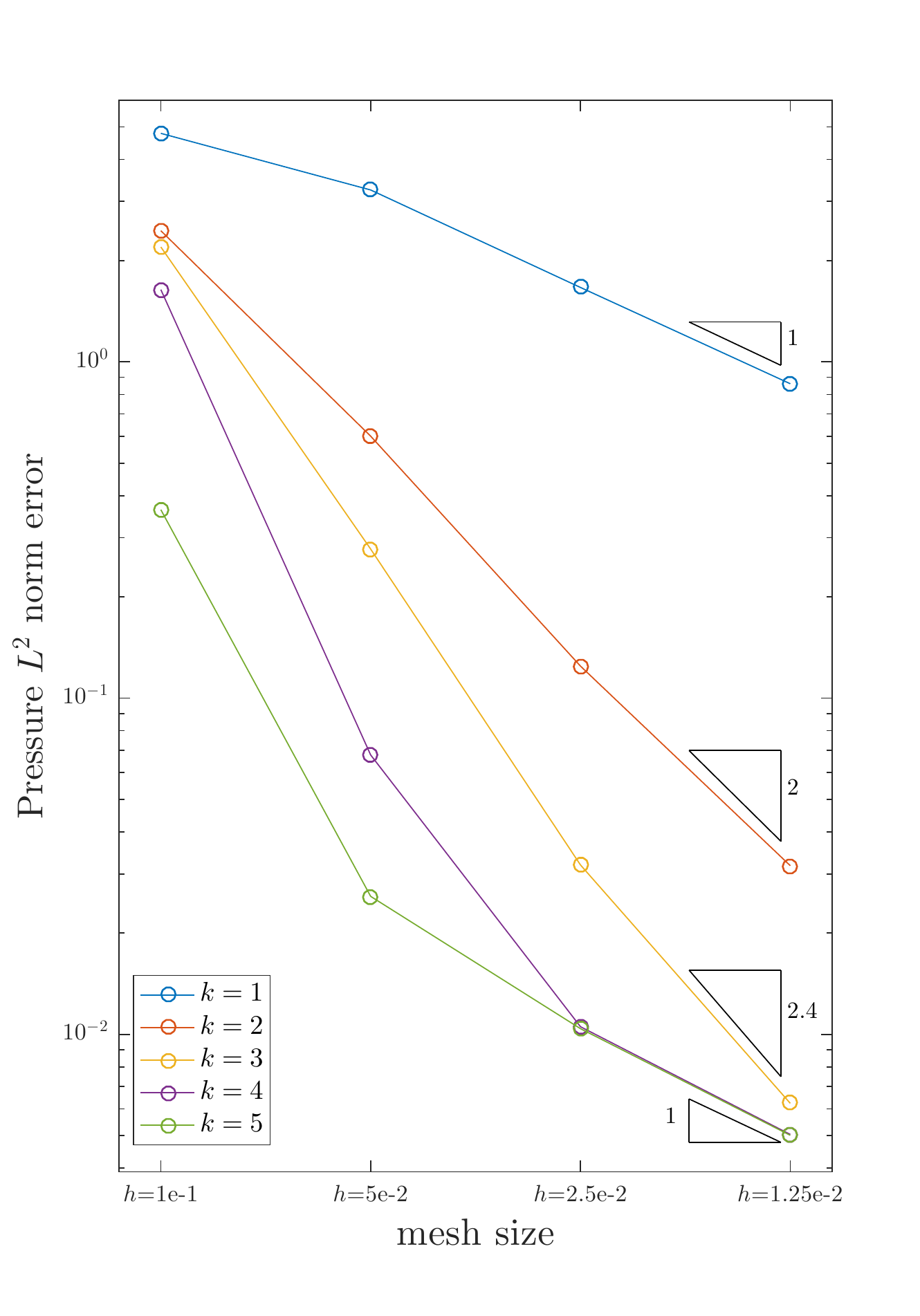}
  \includegraphics[width=0.48\textwidth]{./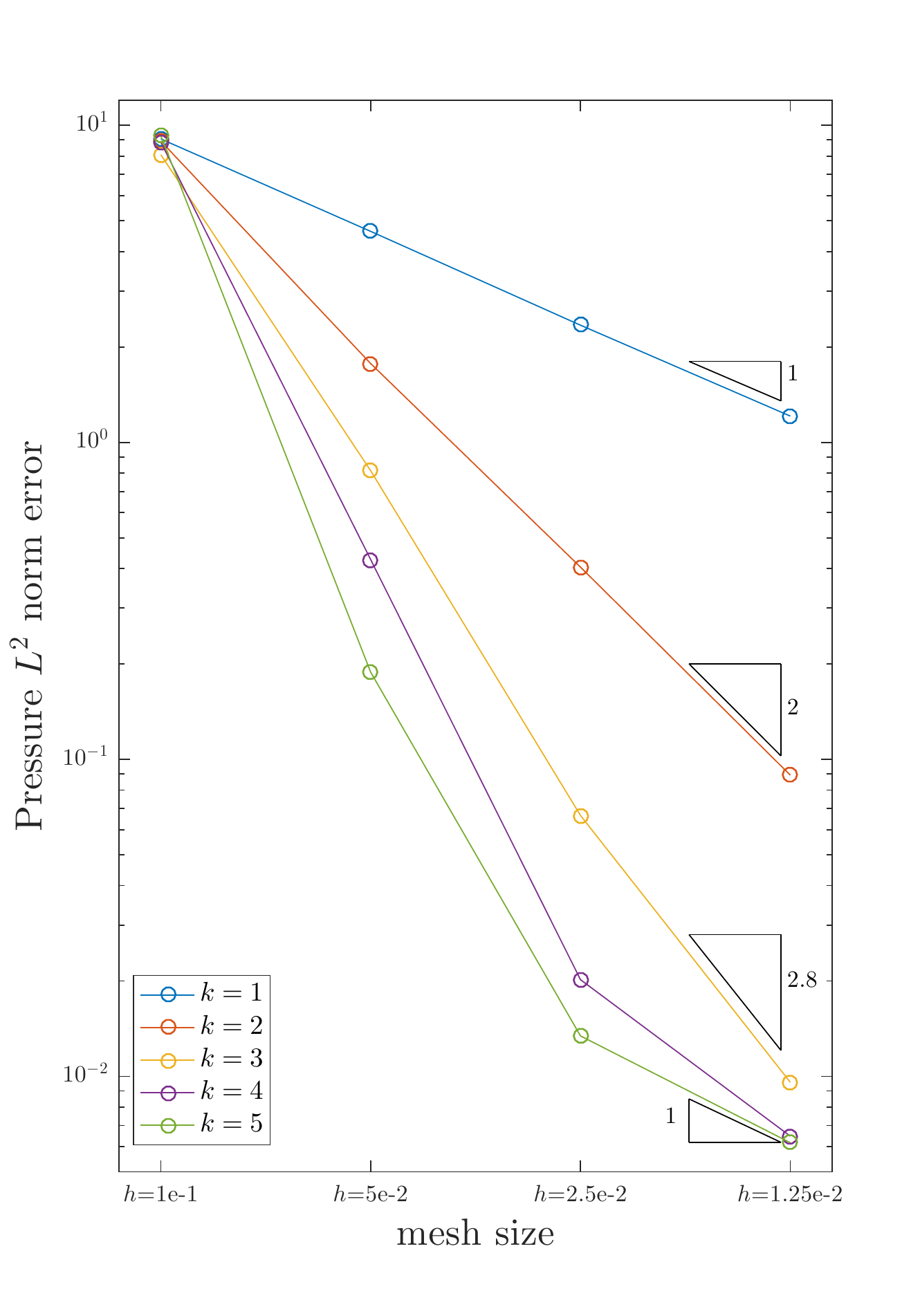}
  \caption{Pressure $L^2$ norm error on with method \br 3 for the
    smooth case triangular meshes (left)/mixed meshes(right)}
  \label{fig:m3Pressureerror}
\end{figure}

\subsection{Driven cavity problem}
The driven cavity problem is a standard benchmark test for the
incompressible flow. It models a plane flow of an isothermal fluid in
a unit square lid-driven cavity. The domain $\Omega$ is $[0,1]^2$ and
the boundary condition and the source term are given by
\begin{displaymath}
  \bm g(x,y)=\begin{cases} (1,0)^T,\quad
  0<x<1,\ y=1,\\ (0,0)^T,\quad\text{otherwise},\\
  \end{cases}\quad \bm f(x,y)=\begin{bmatrix}
    0\\0\\
  \end{bmatrix}.
\end{displaymath}

The domain is partitioned by triangular mesh with mesh size
$h=\frac1{60}$. Fig \ref{fig:m1liddriven} shows the velocity vectors
and the streamline of the flow for the discretization of $\bm
V_{h}^3\times Q_{h}^2$. Fig \ref{fig:m2liddriven} and
\ref{fig:m3liddriven} present the results for the pair of $\bm
V_{h}^3\times Q_{h}^3$ and $\bm V_h^3\times Q_h^0$, respectively.

\begin{figure}
  \centering
  \includegraphics[width=0.48\textwidth]{./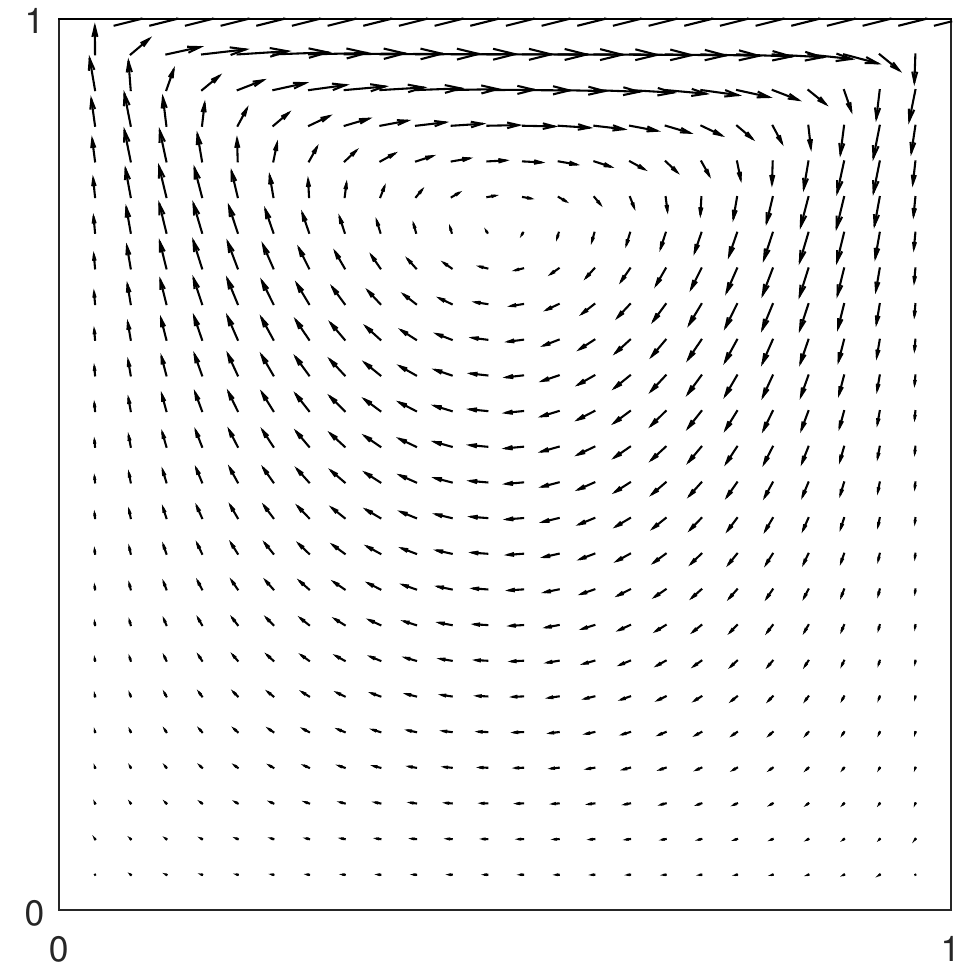}
  \includegraphics[width=0.48\textwidth]{./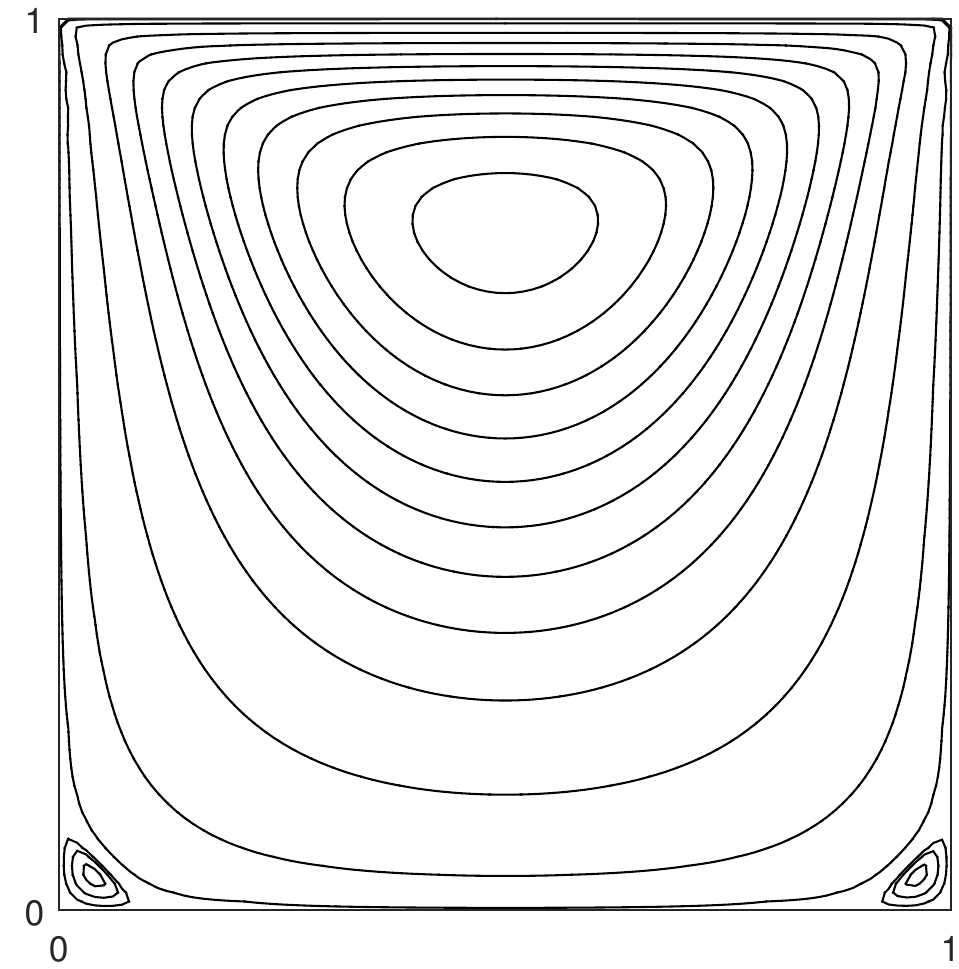}
  \caption{Velocity vectors (left) and the streamline of the flow
    (right) for $\bm V_h^3\times Q_h^2$}
  \label{fig:m1liddriven}
\end{figure}
\begin{figure}
  \centering
  \includegraphics[width=0.48\textwidth]{./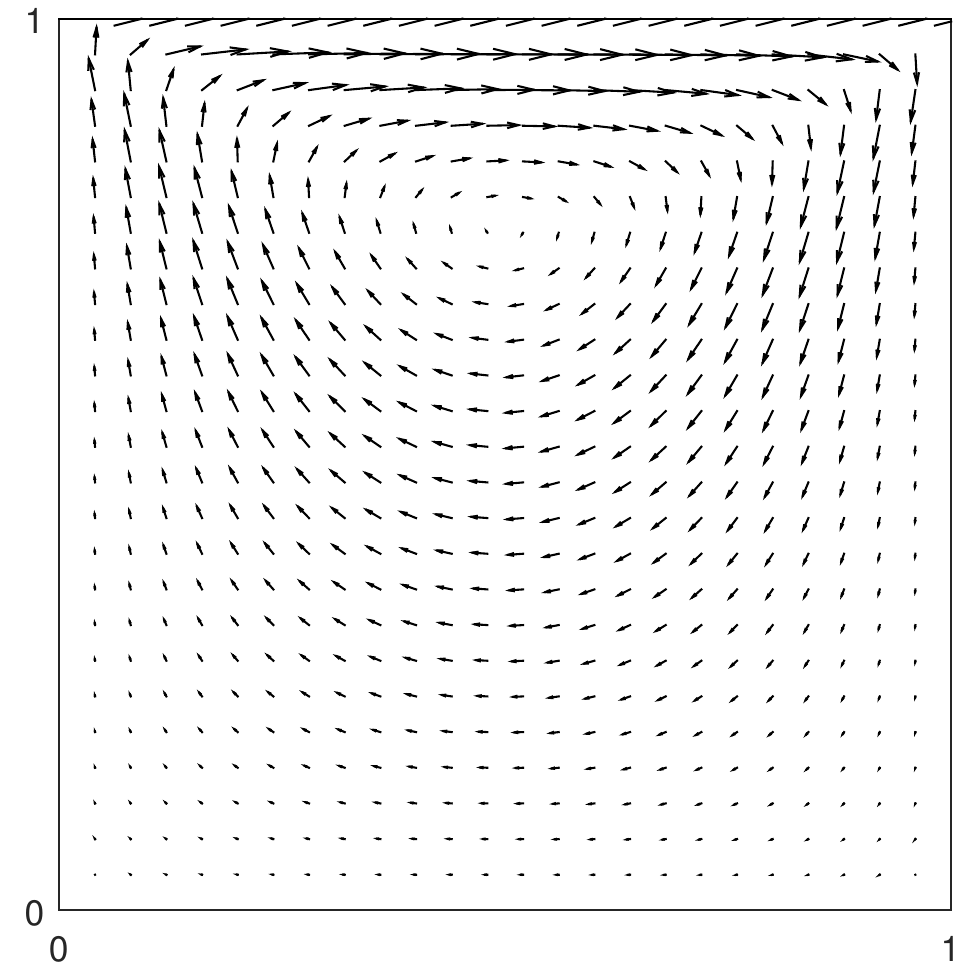}
  \includegraphics[width=0.48\textwidth]{./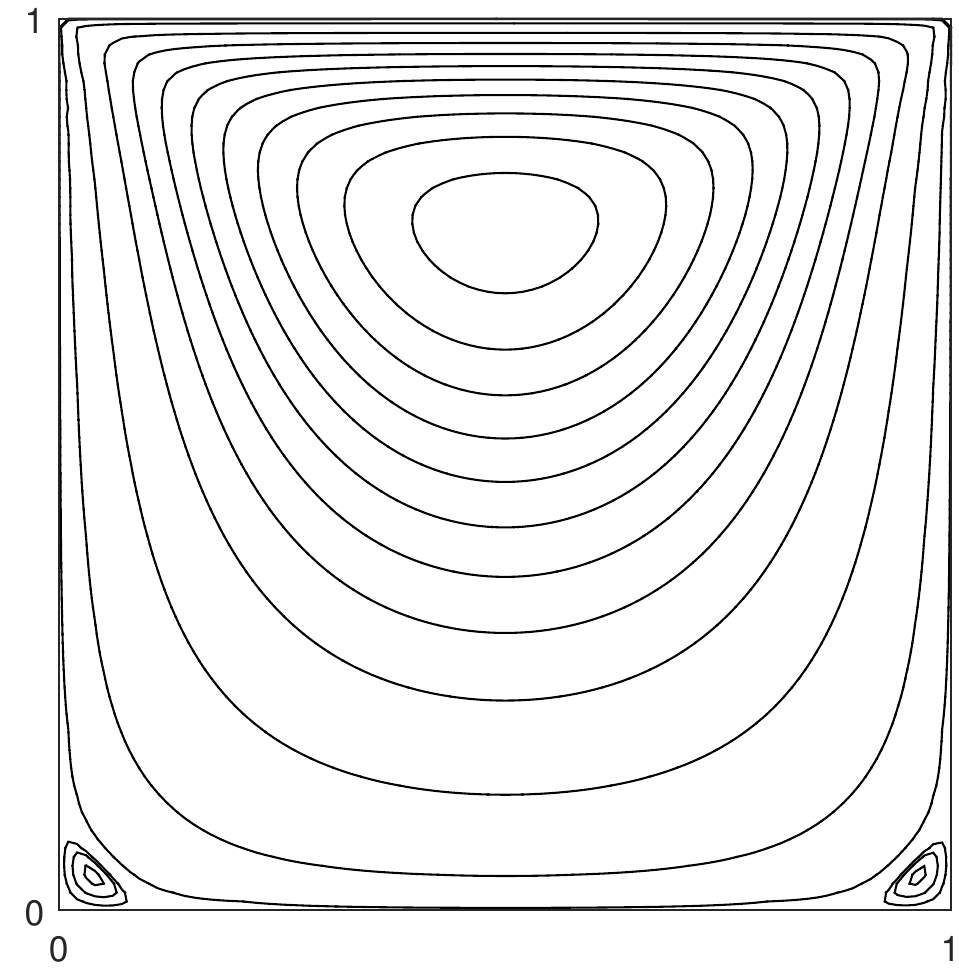}
  \caption{Velocity vectors(left) and the streamline of the
    flow(right) for $\bm V_h^3\times Q_{h}^3$}
  \label{fig:m2liddriven}
\end{figure}
\begin{figure}
  \centering
  \includegraphics[width=0.48\textwidth]{./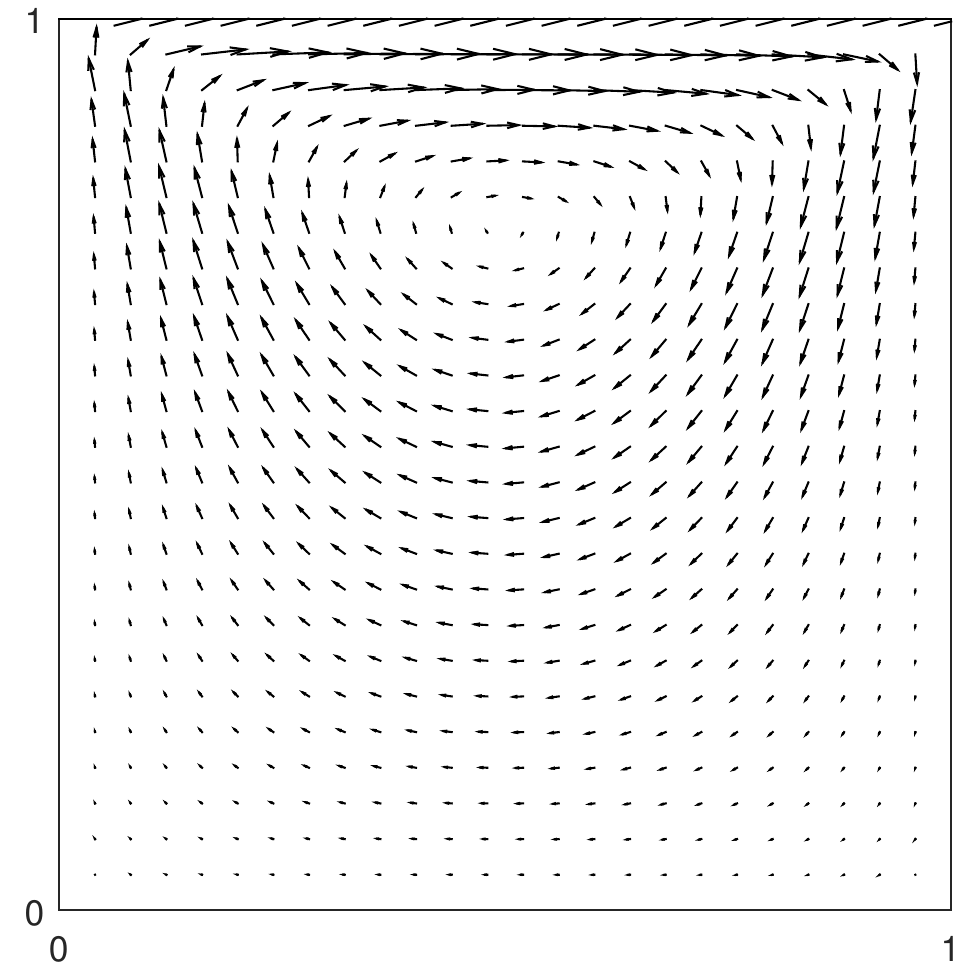}
  \includegraphics[width=0.48\textwidth]{./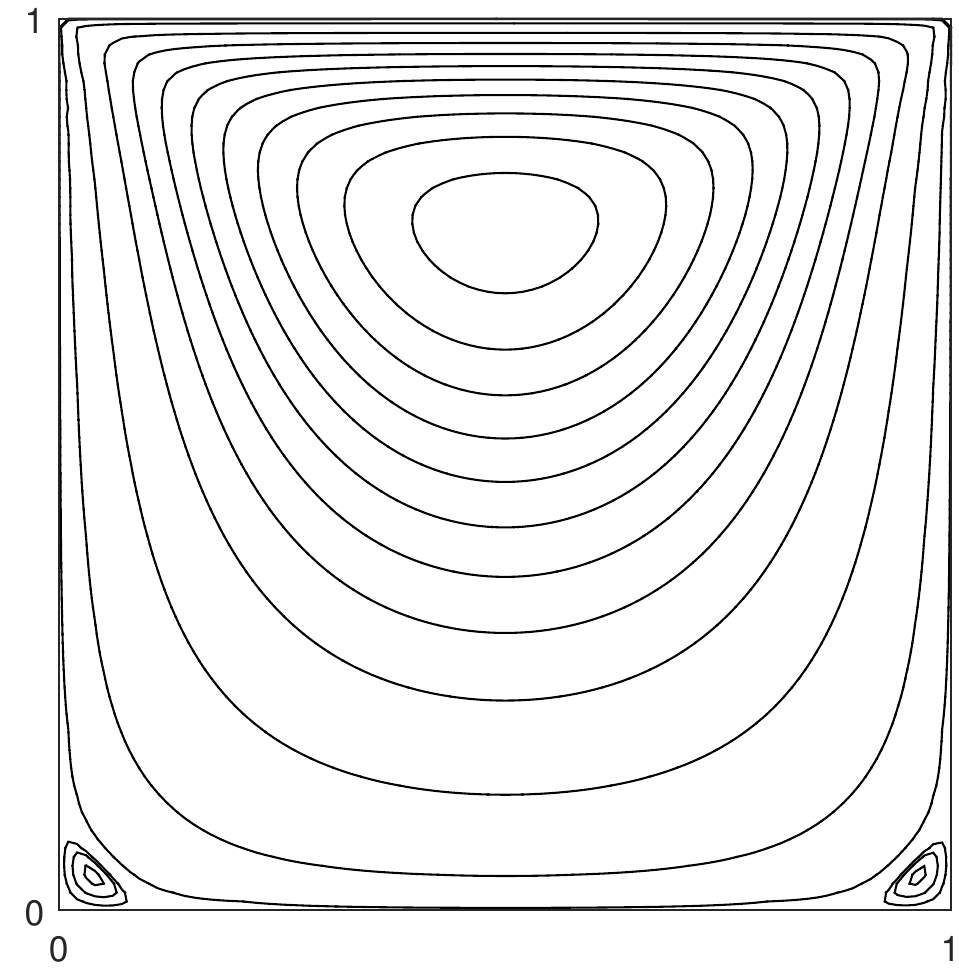}
  \caption{Velocity vectors(left) and the streamline of the
    flow(right) for $\bm V_{h}^3\times Q_{h}^0$}
  \label{fig:m3liddriven}
\end{figure}

\subsection{Non-smooth problem}
In this example, we investigate the performance of our method dealing
with the Stokes problem with a corner singularity in the analytical
solution. Let $\Omega$ be the L-shaped domain
$[-1,1]\times[-1,1]\backslash [0,1)\times(-1,0]$ and the meshes we
use, which are generated by \emph{gmsh}, are refinements of a
triangular mesh of 250 triangles(see Fig \ref{fig:Lshape}). The exact
solution(from \cite{verfurth1996review, hansbo2008piecewise}) is given
by
\begin{displaymath}
  \bm u(r, \theta)=r^\lambda\begin{bmatrix}
  (1+\lambda)\sin(\theta)\psi(\theta)+\cos(\theta)\psi'(\theta)
  \\ \sin(\theta)\psi'(\theta)-(1+\lambda)\cos(\theta)\psi(\theta) \\
  \end{bmatrix},
\end{displaymath}
in polar coordinates, where
\begin{displaymath}
  \begin{aligned}
    \psi(\theta)=&\frac1{1+\lambda}\sin( (1+\lambda)\theta)
    \cos(\lambda\omega)-\cos(
    (1+\lambda)\theta)\\ &-\frac1{1-\lambda}\sin( (1-\lambda)\theta)
    \cos(\lambda\omega) + \cos( (1-\lambda)\theta),\\
  \end{aligned}
\end{displaymath}
with $\omega=\frac32\pi$ and $\lambda\approx0.5444837$ as the smallest
positive root to
\begin{displaymath}
  \sin(\lambda\omega) + \lambda\sin\omega=0.
\end{displaymath}
At the corner $(0,0)$, the exact solution contains a singularity which
indicates $\bm u(r, \theta)$ does not belong to $H^{2}(\Omega)$.
\begin{figure}[htp]
  \centering
  \includegraphics[width=0.4\textwidth]{./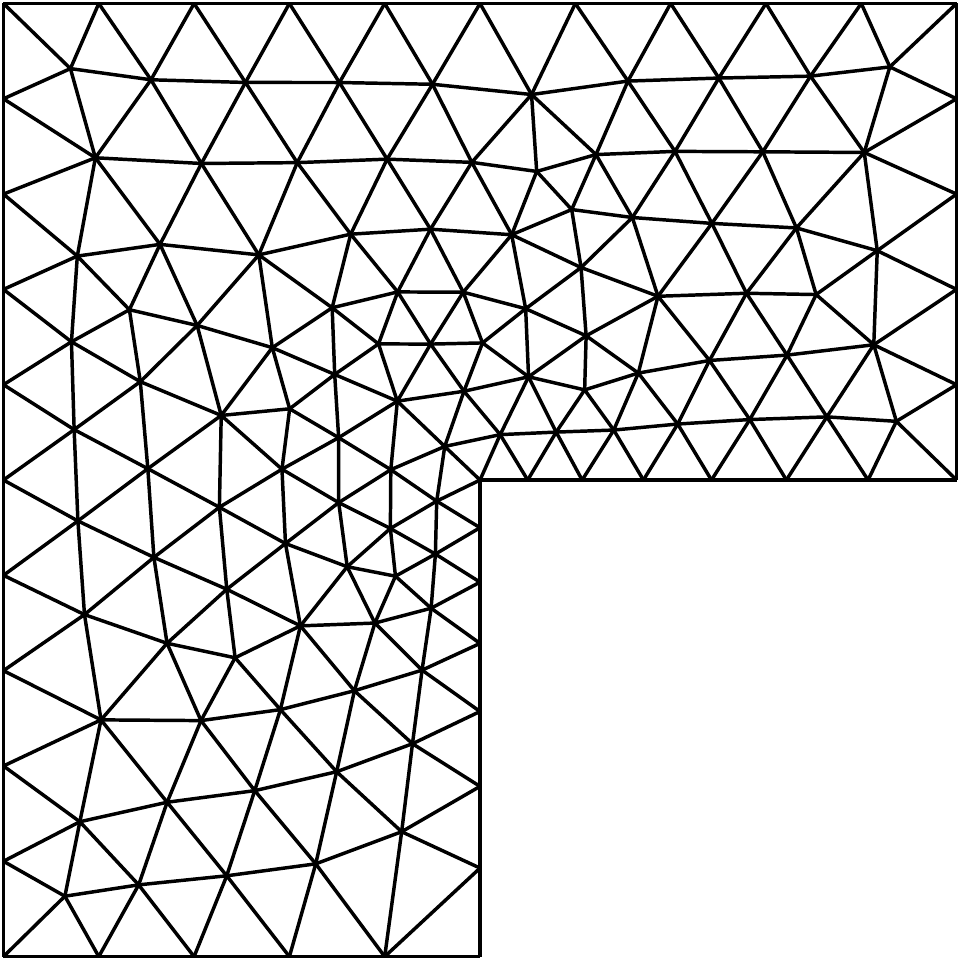}
  \hspace{25pt}
  \includegraphics[width=0.4\textwidth]{./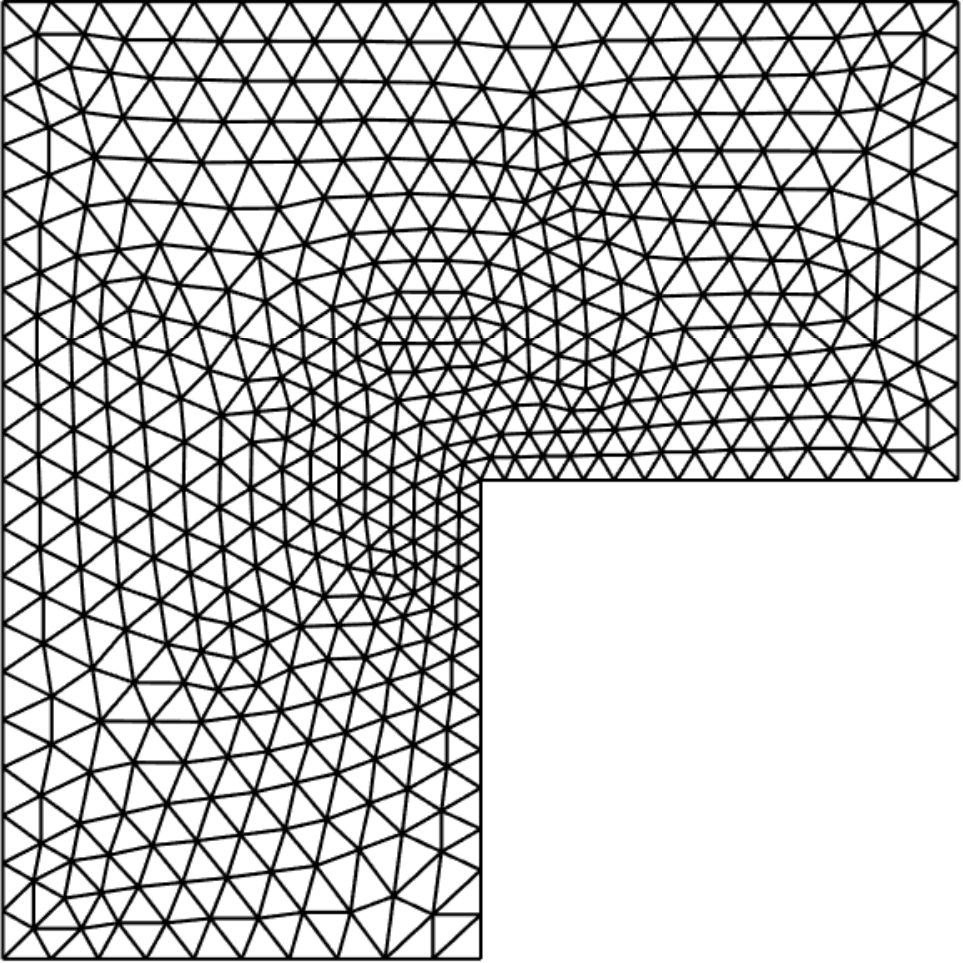}
  \caption{The triangular meshes of L-shaped domain, 250 elements
    (left)/ 1000 elements (right)}
  \label{fig:Lshape}
\end{figure}

The $\# S(K)$ is chosen also as the Tab \ref{tab:patchnumber2d} shows.
In Tab \ref{tab:Lshapeerror} we list the $L^2$ norm error of the
velocity against the degrees of freedom for different pairs of
approximation spaces. We observe that all convergence orders are about
1, which are consistent with the results in \cite{hansbo2008piecewise}
where a piece divergence-free discontinuous Galerkin method is
developed to solve this problem.

\begin{table}[!htp]
  \newcommand\me{\mathrm e} \renewcommand\arraystretch{1.8} \centering
  \caption{Convergence orders of nonsmooth example in L-shaped domain}
  \vspace{-8pt}
  \label{tab:Lshapeerror}
  \scalebox{0.76}{
  \begin{tabular}{|l|l|l|l|l|l|l|l|l|l|}
    \hline \multirow{2}{*}{Method} & 250 DOFs &
    \multicolumn{2}{|l|}{1000 DOFs} & \multicolumn{2}{|l|}{4000 DOFs}
    & \multicolumn{2}{|l|}{16000 DOFs}& \multicolumn{2}{|l|}{64000
      DOFs}\\ \cline{2-10} & $L^2$ error & $L^2$ error & order& $L^2$
    error & order& $L^2$ error & order& $L^2$ error & order\\ \hline
    $V_{h}^2\times Q_{h}^1$ & $5.57\mathrm{E}{-2}$ &
    $2.40\mathrm{E}{-2}$& 1.21 & $9.56\mathrm{E}{-3}$& 1.33&
    $4.61\mathrm{E}{-3}$& 1.05& $2.13\mathrm{E}{-3}$& 1.10\\ \hline
    $V_{h}^3\times Q_{h}^2$ & $4.97\mathrm{E}{-2}$ &
    $1.89\mathrm{E}{-2}$& 1.33 & $9.29\mathrm{E}{-3}$& 1.09&
    $4.51\mathrm{E}{-3}$& 1.04& $2.21\mathrm{E}{-3}$& 1.03\\ \hline
    $V_{h}^2\times Q_{h}^2$ & $4.44\mathrm{E}{-2}$ &
    $1.58\mathrm{E}{-2}$& 1.49 & $7.46\mathrm{E}{-3}$& 1.08&
    $3.61\mathrm{E}{-3}$& 1.05& $1.73\mathrm{E}{-3}$& 1.06\\ \hline
    $V_{h}^3\times Q_{h}^3$ & $4.83\mathrm{E}{-2}$ &
    $1.87\mathrm{E}{-2}$& 1.37 & $8.66\mathrm{E}{-3}$& 1.11&
    $4.11\mathrm{E}{-3}$& 1.07& $1.96\mathrm{E}{-3}$& 1.07\\ \hline
    $V_{h}^2\times Q_{h}^0$ & $6.59\mathrm{E}{-2}$ &
    $2.22\mathrm{E}{-2}$& 1.57 & $1.03\mathrm{E}{-2}$& 1.11&
    $5.10\mathrm{E}{-3}$& 1.01& $2.49\mathrm{E}{-3}$& 1.03\\ \hline
    $V_{h}^3\times Q_{h}^0$ & $6.88\mathrm{E}{-2}$ &
    $2.71\mathrm{E}{-2}$& 1.34 & $9.75\mathrm{E}{-3}$& 1.47&
    $4.58\mathrm{E}{-3}$& 1.09& $2.18\mathrm{E}{-3}$& 1.07\\ \hline
  \end{tabular}
  }
\end{table}

\section{Conclusion}
In this paper, we have introduced a new discontinuous Galerkin method
to solve the Stokes problem. A novelty of this method is the new
piecewise polynomial space that is reconstructed by solving local
least squares problem. A variety of numerical inf-sup tests
demonstrate the stability of this method. The optimal error estimates
in $L^2$ norm and DG energy norm are presented and the numerical
results are reported to show good agreement with the theoretical
predictions.

\bibliographystyle{elsarticle-num}

\end{document}